\numberwithin{equation}{section}
\newtheorem{theorem}{Theorem}[section]
\newtheorem{lemma}[theorem]{Lemma}
\newtheorem{prop}[theorem]{Proposition}
\theoremstyle{definition}
\newtheorem{remark}[theorem]{Remark}
\theoremstyle{definition}
\theoremstyle{definition}
\def\dashint{\operatorname%
{\,\,\text{\bf-}\kern-.98em\DOTSI\intop\ilimits@\!\!}}
\def\\det{\text{\det}}
\def\Xint#1{\mathchoice
 {\XXint\displaystyle\textstyle{#1}}%
 {\XXint\textstyle\scriptstyle{#1}}%
 {\XXint\scriptstyle\scriptscriptstyle{#1}}%
 {\XXint\scriptscriptstyle\scriptscriptstyle{#1}}%
 \!\int}
\def\XXint#1#2#3{{\setbox0=\hbox{$#1{#2#3}{\int}$}
  \vcenter{\hbox{$#2#3$}}\kern-.5\wd0}}
\def\dashint{\Xint-}
\def\.5{\frac{1}{2}}
\newcommand{\RN}[1]{%
  \textup{\uppercase\expandafter{\romannumeral#1}}%
}
\renewcommand{\epsilon}{\varepsilon}
\newcounter{marnote}
\begin{document}

\title[Local behavior for weighted parabolic equations]{Local behavior for solutions to anisotropic weighted quasilinear degenerate parabolic equations}

\author[C.X. Miao]{Changxing Miao}
\address[C.X. Miao] {1. Beijing Computational Science Research Center, Beijing 100193, China.}
\address{2. Institute of Applied Physics and Computational Mathematics, P.O. Box 8009, Beijing, 100088, China.}
\email{miao\_changxing@iapcm.ac.cn}

\author[Z.W. Zhao]{Zhiwen Zhao}

\address[Z.W. Zhao]{Beijing Computational Science Research Center, Beijing 100193, China.}
\email{zwzhao365@163.com}


\date{\today} 


\maketitle
\begin{abstract}
This paper aims to study the local behavior of solutions to a class of anisotropic weighted quasilinear degenerate parabolic equations with the weights comprising two power-type weights of different dimensions. We first capture the asymptotic behavior of the solution near the singular or degenerate point of the weights. In particular, we find an explicit upper bound on the decay rate exponent determined by the structures of the equations and weights, which can be achieved under certain condition and meanwhile reflects the damage effect of the weights on the regularity of the solution. Furthermore, we prove the local H\"{o}lder regularity of solutions to non-homogeneous parabolic $p$-Laplace equations with single power-type weights.
\end{abstract}

\maketitle



\section{Introduction and main results}

Let $\Omega$ be a smooth bounded domain in $\mathbb{R}^{n}$ with $n\geq2$. For $T>0$, denote $\Omega_{T}:=\Omega\times(-T,0]$. In this paper, we focus on a class of anisotropic weighted quasilinear parabolic equations as follows:
\begin{align}\label{PO001}
\begin{cases}
w_{1}\partial_{t}u-\mathrm{div}(w_{2}\mathbf{a}(x,t,u,\nabla u))=w_{2}b(x,t,u,\nabla u),& \mathrm{in}\;\Omega_{T},\\
\|u\|_{L^{\infty}(\Omega_{T})}\leq \mathcal{M}<\infty,
\end{cases}
\end{align}
where $w_{1}=|x'|^{\theta_{1}}|x|^{\theta_{2}}$, $w_{2}=|x'|^{\theta_{3}}|x|^{\theta_{4}}$, $x'=(x_{1},...,x_{n-1})$, the ranges of $\theta_{i}$, $i=1,2,3,4$ are prescribed in the following theorems, the functions $\mathbf{a}:\Omega_{T}\times\mathbb{R}^{n+1}\rightarrow\mathbb{R}^{n}$ and $b:\Omega_{T}\times\mathbb{R}^{n+1}\rightarrow\mathbb{R}$ are only measurable and subject to the following structure conditions: for $p>1$ and $(x.t)\in\Omega_{T}$,
\begin{itemize}
{\it
\item[$(\mathbf{H1})$] $\mathbf{a}(x,t,u,\nabla u)\cdot\nabla u\geq\lambda_{1}|\nabla u|^{p}-\phi_{1}(x,t)$,
\item[$(\mathbf{H2})$] $|\mathbf{a}(x,t,u,\nabla u)|\leq\lambda_{2}|\nabla u|^{p-1}+\phi_{2}(x,t)$,
\item[$(\mathbf{H3})$] $|b(x,t,u,\nabla u)|\leq\lambda_{3}|\nabla u|^{p-1}+\phi_{3}(x,t)$.}
\end{itemize}
Here $\lambda_{i}$, $i=1,2,3$ are given positive constants and $\phi_{i}$, $i=1,2,3$ are nonnegative functions such that
\begin{align}\label{ZE90}
\|\phi\|_{L^{l_{0}}(\Omega_{T},w_{2})}:=\left(\int_{\Omega_{T}}|\phi|^{l_{0}}w_{2}dxdt\right)^{\frac{1}{l_{0}}}<\infty,\quad \phi:=\phi_{1}+\phi_{2}^{\frac{p}{p-1}}+\phi_{3}^{\frac{p}{p-1}},
\end{align}
where $l_{0}$ satisfies
\begin{align}\label{E01}
l_{0}>\frac{n+p+\theta_{1}+\theta_{2}}{p}.
\end{align}
Remark that the anisotropy of the weight $|x'|^{\theta_{1}}|x|^{\theta_{2}}$ comes from $|x'|^{\theta_{1}}$. From the geometric point of view, $|x'|^{\theta_{1}}$ represents a degenerate or singular line, while $|x|^{\theta_{2}}$ only exhibits singularity or degeneracy at the origin. In \cite{LY202100} this type of weights were utilized to classify the singularities of all $(-1)$-homogeneous axisymmetric no-swirl solutions to the stationary Navier-Stokes equations found in \cite{LLY201801,LLY201802}. With regard to their properties and more applications in weighted Sobolev spaces and relevant PDEs, see e.g. \cite{LY2023,MZ2023,MZ202302}.

The structure conditions in $\mathrm{(}\mathbf{H1}\mathrm{)}$--$\mathrm{(}\mathbf{H3}\mathrm{)}$ were first proposed and studied in the well-known work \cite{D1986} completed by DiBenedetto, where the method of intrinsic scaling was developed to establish the H\"{o}lder estimates of solutions to degenerate parabolic equations. We here would like to point out that the assumed conditions in \eqref{ZE90}--\eqref{E01} imposed on $\phi_{i}$, $i=1,2,3$ are different from that in \cite{D1986} and have obvious advantages in simplifying the computations and presenting the following proofs in a more concise manner. Given these structure conditions, the degeneracy and singularity of \eqref{PO001} are of the same nature to the prototype as follows:
\begin{align*}
w_{1}\partial_{t}u-\mathrm{div}(w_{2}|\nabla u|^{p-2}\nabla u)=0.
\end{align*}
When $w_{1}=w_{2}=1$, it is the classical parabolic $p$-Laplace equation, which is frequently used to describe a large variety of diffusion phenomena occurring in natural sciences and engineering applications such as nonlinear porous medium flows and chemical concentration. For more relevant physical models and explanations, see \cite{BDGLS2023,DK2007,V2007} and the references therein. On one hand, according to the diffused feature of the flows, the equation can be divided into three types as follows. If $1<p<2$, it is termed fast diffusion equation and its solution will undergo extinction in finite time. The equation in the case of $p>2$, by contrast, is called slow diffusion equation and its solution always decays in the form of power-function to the stable state. The borderline case of $p=2$ corresponds to heat equation and its solution decays exponentially in the time variable. On the other hand, from the view of analysis, if $p>2$, the equation is said to be degenerate since its modulus of ellipticity $|\nabla u|^{p-2}$ degenerates to be zero at points where $|\nabla u|=0$, while if $1<p<2$, the equation is singular because $|\nabla u|^{p-2}$ blows up at points where $|\nabla u|=0$. In contrast to the case of $p=2$, these singular or degenerate nature in the case of $p\neq2$ weaken the smoothness of the solution to be of $C^{1,\alpha}$ for some $0<\alpha<1$. With regard to the regularity for quasilinear elliptic and parabolic equations without weights, the literature is very wide, see e.g. \cite{L2019,D1986,CD1988,D1983,D1993,U2008,E1982,DK1992,L1994,DGV2012,DGV2008,DGV200802,AL1983,DF198501,DF198502,DF1984,C1991,BSS2022,BDGLS2023} and the references therein.

For the weighted case, the situation becomes more complex. From the perspective of physical phenomena, the weights play a role in enhancing or reducing the diffusion rate of the flows. This fact can be observed by using the standard separation of variables method to obtain exact solution for heat equation with monomial weight $|x|^{\theta_{2}}$. The problem of studying their enhancement and weakening effects on the diffusion may possess a potential application in manufacturing porous medium materials with special permeation rates and dominating diffusion processes according to the requirements of the industry. From the angle of the structure of the above weighted equations, the modulus of ellipticity consisting of $|\nabla u|^{p-2}$ and the weight $w_{2}$ exhibits more complicated singular and degenerate behavior. This leads to that the solution will become worse and its regularity may further fall to be only of $C^{\alpha}$. In fact, the damage effect induced by the weights is stronger than that of $|\nabla u|^{p-2}$. Even when $p=2$, the smoothness of the solution has always been reduced to be of $C^{\alpha}$ due to their damage effect, except for some solutions of special structures such as even solutions found in \cite{STV2021}. The study on the weighted equations can date back to the famous work \cite{FKS1982}, where Fabes, Kenig and Serapioni \cite{FKS1982} proved the local H\"{o}lder continuity of solutions to second-order elliptic equations of divergence form with the weight $|x|^{\theta_{2}}$ for any $\theta_{2}>-n$. The recent work \cite{DLY2021,DLY2022} utilized spherical harmonic expansion to solve the sharp decay rate exponents of solutions to the weighted elliptic equations near the degenerate point of the weight, which clearly reflects the weakening effect of the weight on the regularity, see Lemma 2.2 in \cite{DLY2021} and Lemmas 2.2 and 5.1 in \cite{DLY2022} for more details. More importantly, Dong, Li and Yang \cite{DLY2021,DLY2022} used these exponents to find the optimal gradient blow-up rates for the insulated conductivity problem arising from composite materials, which has been previously regarded as a challenging problem. Recently, Miao and Zhao \cite{MZ202302} studied a class of anisotropic Muckenhoupt weights having more general form of $|x'|^{\theta_{1}}|x|^{\theta_{2}}|x_{n}|^{\theta_{3}}$ and raised a couple of intriguing problems involving anisotropic weighted interpolation and Poincar\'{e} inequalities and the classification for their enhancement and weakening effects on the diffusion. One of the main motivations in \cite{MZ202302} originates from the weight $|x_{n}|^{\theta_{3}}$, since this weight plays a significant role in the establishment of the global regularity for fast diffusion equations in \cite{JX2019,JX2022} completed by Jin and Xiong. Their results especially answered an open question raised by Berryman and Holland \cite{BH1980}. Subsequently, Jin, Ros-Oton and Xiong \cite{JRX2023} further extended the results to porous medium equations. For more investigations related to weighted elliptic and parabolic equations, we refer to \cite{FP2013,DP2023,DPT2023,DPT202302,MZ2023,JX2023,STV2021,STV202102,GW1991,GW1990,BS2023,S2010} and the references therein.

Before giving the definition of weak solution to problem \eqref{PO001}, we first list the required weighted spaces. Given a weight $w$, we use $L^{p}(\Omega,w)$, $L^{p}(\Omega_{T},w)$ and $W^{1,p}(\Omega,w)$ to denote the weighted $L^{p}$ spaces and weighted Sobolev spaces with their norms, respectively, given by
\begin{align*}
\begin{cases}
\|u\|_{L^{p}(\Omega,w)}=\left(\int_{\Omega}|u|^{p}wdx\right)^{\frac{1}{p}},\quad\|u\|_{L^{p}(\Omega_{T},w)}=\big(\int_{\Omega_{T}}|u|^{p}wdxdt\big)^{\frac{1}{p}},\vspace{0.3ex}\notag\\
\|u\|_{W^{1,p}(\Omega,w)}=\left(\int_{\Omega}|u|^{p}wdx\right)^{\frac{1}{p}}+\left(\int_{\Omega}|\nabla u|^{p}wdx\right)^{\frac{1}{p}}.
\end{cases}
\end{align*}
A function $u\in C((-T,0];L^{2}(\Omega,w_{1}))\cap L^{p}((-T,0);W^{1,p}(\Omega,w_{2}))$ is called a weak solution of \eqref{PO001}, provided that for any $-T<t_{1}<t_{2}\leq0$,
\begin{align*}
&\int_{\Omega}u\varphi w_{1}dx\Big|_{t_{1}}^{t_{2}}+\int_{t_{1}}^{t_{2}}\int_{\Omega}(-u\partial_{t}\varphi w_{1}+w_{2}\mathbf{a}(x,t,u,\nabla u)\cdot\nabla\varphi)dxdt\notag\\
&=\int_{t_{1}}^{t_{2}}\int_{\Omega}b(x,t,u,\nabla u)\varphi w_{2}dxdt,
\end{align*}
for all $\varphi\in W^{1,2}((0,T);L^{2}(\Omega,w_{1}))\cap L^{p}((0,T);W^{1,p}_{0}(\Omega,w_{2}))$.

Define
\begin{align*}
\begin{cases}
\mathcal{A}=\{(\theta_{1},\theta_{2}): \theta_{1}>-(n-1),\,\theta_{2}\geq0\},\\
\mathcal{B}=\{(\theta_{1},\theta_{2}):\theta_{1}>-(n-1),\,\theta_{2}<0,\,\theta_{1}+\theta_{2}>-n\},\\
\mathcal{C}_{p}=\{(\theta_{1},\theta_{2}):\theta_{1}<(n-1)(p-1),\,\theta_{2}\leq0\},\\
\mathcal{D}_{p}=\{(\theta_{1},\theta_{2}):\theta_{1}<(n-1)(p-1),\,\theta_{2}>0,\,\theta_{1}+\theta_{2}<n(p-1)\}.
\end{cases}
\end{align*}
Introduce the following exponent conditions:
\begin{itemize}
{\it
\item[$(\mathbf{K1})$] $(\theta_{1},\theta_{2})\in[(\mathcal{A}\cup\mathcal{B})\cap(C_{p}\cup\mathcal{D}_{p})]\cup\{\theta_{1}=0,\,\theta_{2}\geq n(p-1)\}$,\;$(\theta_{3},\theta_{4})\in\mathcal{A}\cup\mathcal{B}$,
\item[$(\mathbf{K2})$] $\theta_{1}+\theta_{2}>p-n,\;\theta_{1}\geq\theta_{3},\;\theta_{1}+\theta_{2}\geq\theta_{3}+\theta_{4},\;\theta_{3}+\min\{0,\theta_{4}\}>1-n$.}
\end{itemize}
We here give some explanations for the implications of $\mathrm{(}\mathbf{K1}\mathrm{)}$--$\mathrm{(}\mathbf{K2}\mathrm{)}$. First, $\mathcal{A}\cup\mathcal{B}$ is called the measure condition, which makes $w_{i}dx$, $i=1,2$ become two Radon measures. Second, $(\mathcal{A}\cup\mathcal{B})\cap(C_{p}\cup\mathcal{D}_{p})$ is introduced to guarantee that the considered weight $w_{1}$ belongs to the Muckenhoupt class $A_{p}$ (see \cite{M1972}) and we then obtain anisotropic weighted Poincar\'{e} inequality, which is critical to the establishment of the isoperimetric inequality of De Giorgi type. By contrast, the range of $\{\theta_{1}=0,\,\theta_{2}\geq n(p-1)\}$ is added by using the theories of quasiconformal mappings for the same purpose. See Section 2 in \cite{MZ2023} and Corollary 15.35 in \cite{HKM2006} for these statements. As for $\mathrm{(}\mathbf{K2}\mathrm{)}$, the range of $\theta_{1}+\theta_{2}>p-n$ is required to establish anisotropic weighted parabolic Sobolev inequality in Proposition \ref{prop001} below, while other ranges are used to ensure the validity of the switch from the measures $w_{1}dxdt$ to $w_{2}dxdt$, see the proofs in Section \ref{SEC05}.

Throughout this paper, let $C$ be a universal constant depending only on the data including $n,p,l_{0},\mathcal{M},\|\phi\|_{L^{l_{0}}(\Omega_{T},w_{2})}$, $\lambda_{i}$, $i=1,2,3$ and $\theta_{i}$, $i=1,2,3,4,$ whose value may change from line to line.

For later use, define two constants as follows:
\begin{align}\label{VAR01}
\varepsilon_{0}:=\frac{p(l_{0}-1)-n-\theta_{1}-\theta_{2}}{p(l_{0}-1)+2},\quad\vartheta:=\theta_{1}+\theta_{2}-\theta_{3}-\theta_{4}.
\end{align}
The first main result is concerned with the asymptotic behavior of solution to problem \eqref{PO001} near the singular or degenerate point of the weights.
\begin{theorem}\label{ZWTHM90}
Assume that $p>2$, $n\geq2$, $\mathrm{(}\mathbf{H1}\mathrm{)}$--$\mathrm{(}\mathbf{H3}\mathrm{)}$, \eqref{ZE90}--\eqref{E01} and $\mathrm{(}\mathbf{K1}\mathrm{)}$--$\mathrm{(}\mathbf{K2}\mathrm{)}$ hold. Let $u$ be a weak solution of problem \eqref{PO001} with $\Omega\times(-T,0]=B_{1}\times(-1,0]$. Then there exists a small constant $0<\alpha\leq\varepsilon_{0}$ depending only on the above data, such that for any fixed $t_{0}\in (-1/2,0]$ and all $(x,t)\in B_{1/2}\times(-1/2,t_{0}]$,
\begin{align}\label{QNAW001ZW}
u(x,t)=u(0,t_{0})+O(1)\big(|x|+|t-t_{0}|^{\frac{1}{p+\vartheta}}\big)^{\alpha},
\end{align}
where $\varepsilon_{0}$ and $\vartheta$ are defined by \eqref{VAR01}, $O(1)$ represents a quantity satisfying that $|O(1)|\leq C$ for some positive constant $C$ depending only on the data.
\end{theorem}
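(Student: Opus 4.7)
The plan is to prove \eqref{QNAW001ZW} via a DiBenedetto-style oscillation-reduction argument performed on intrinsic parabolic cylinders that simultaneously incorporate the $p$-Laplacian degeneracy ($p>2$) and the anisotropy of $(w_1,w_2)$. For $(0,t_0)\in B_{1/2}\times(-1/2,0]$ and $\omega>0$, the natural family of cylinders is
\begin{align*}
Q_{r,\omega}:=B_r(0)\times\bigl(t_0-\omega^{2-p}r^{p+\vartheta},t_0\bigr],
\end{align*}
where $\omega$ is intended to control $\operatorname*{osc}_{Q_{r,\omega}}u$. The exponent $p+\vartheta$ is forced by the invariance of the prototype $w_1\partial_t u=\mathrm{div}(w_2|\nabla u|^{p-2}\nabla u)$ under the scaling $x\mapsto rx$, $t\mapsto\omega^{2-p}r^{p+\vartheta}t$, $u\mapsto u/\omega$, and this is exactly what produces the factor $|t-t_0|^{1/(p+\vartheta)}$ in the conclusion.

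Step one: derive weighted Caccioppoli and logarithmic estimates on $Q_{r,\omega}$. Testing the weak formulation against $\zeta^p(u-k)_{\pm}$, and against a logarithmic cut-off, and applying $(\mathbf{H1})$--$(\mathbf{H3})$, the forcing term is controlled via \eqref{ZE90}--\eqref{E01} and H\"older's inequality by a quantity of order $\|\phi\|_{L^{l_0}(\Omega_T,w_2)}r^{p\varepsilon_0}$ with $\varepsilon_0$ as in \eqref{VAR01}; the exponent $\varepsilon_0$ is precisely the Morrey-type gain from integrability $l_0$ against the weighted measure of dimension $n+p+\theta_1+\theta_2$. The inequalities $\theta_1\ge\theta_3$ and $\theta_1+\theta_2\ge\theta_3+\theta_4$ in $(\mathbf{K2})$ let me compare $w_1$ and $w_2$ on $B_r(0)$ up to the factor $r^\vartheta$, which is precisely balanced against the intrinsic time length $\omega^{2-p}r^{p+\vartheta}$. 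Step two: run the De Giorgi iteration, relying on the anisotropic weighted parabolic Sobolev embedding (Proposition \ref{prop001}, available because $\theta_1+\theta_2>p-n$) and the anisotropic weighted Poincar\'e inequality (available under $(\mathbf{K1})$, either in the Muckenhoupt $A_p$ range or in the quasiconformal range). This produces the usual two alternatives near the infimum $\mu^-$ and supremum $\mu^+$; combining them via the expansion-of-positivity / De Giorgi isoperimetric lemma on time slices yields, for some $\rho,\eta\in(0,1)$ depending only on the data, a quantitative oscillation decay
\begin{align*}
\operatorname*{osc}_{Q_{\rho r,\omega/2}}u\le\max\Bigl\{(1-\eta)\omega,\;C\|\phi\|_{L^{l_0}(\Omega_T,w_2)}r^{\varepsilon_0}\Bigr\}.
\end{align*}

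Step three: iterate. Choosing $r_k=\rho^k r_0$ and $\omega_k$ consistently with the intrinsic constraint $\omega_{k+1}^{2-p}r_{k+1}^{p+\vartheta}\le\omega_k^{2-p}r_k^{p+\vartheta}$ (so that $Q_{r_{k+1},\omega_{k+1}}\subset Q_{r_k,\omega_k}$), the oscillation decay gives
\begin{align*}
\operatorname*{osc}_{Q_{r_k,\omega_k}}u\le C\bigl(r_k^\alpha+\|\phi\|_{L^{l_0}(\Omega_T,w_2)}r_k^{\varepsilon_0}\bigr)
\end{align*}
for some $\alpha\in(0,\varepsilon_0]$ depending only on $\rho,\eta,p,\vartheta$. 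Translating back from intrinsic cylinders to the Euclidean parabolic metric via $|t-t_0|^{1/(p+\vartheta)}\sim\omega^{(2-p)/(p+\vartheta)}|x|$ at the boundary of $Q_{r_k,\omega_k}$ produces exactly \eqref{QNAW001ZW}. The central obstacle is the \emph{measure-switching} between $w_1\,dxdt$ and $w_2\,dxdt$: energy inequalities naturally produce density bounds against $w_1$ (since the time derivative carries $w_1$), whereas the Sobolev and Poincar\'e inequalities are formulated against $w_2$. All four inequalities of $(\mathbf{K2})$ are dedicated to making this comparison scale-invariant on $Q_{r,\omega}$; keeping the induction hypothesis $\omega_k\ge C\|\phi\|_{L^{l_0}(\Omega_T,w_2)}r_k^{\varepsilon_0}$ alive at every step (or otherwise exiting the iteration with the trivial bound coming from the second term in the $\max$) is the most delicate piece of bookkeeping in the argument.
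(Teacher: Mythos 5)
Your global skeleton coincides with the paper's: intrinsic cylinders $B_r\times(t_0-\omega^{2-p}r^{p+\vartheta},t_0]$, the exit alternative $\omega\leq C r^{\varepsilon_0}$ with $\varepsilon_0$ from \eqref{VAR01}, weighted energy/Caccioppoli estimates, the anisotropic parabolic Sobolev inequality of Proposition \ref{prop001}, the $w_1\,dxdt\!\to\!w_2\,dxdt$ measure switch encoded in $(\mathbf{K2})$ (the paper isolates this as Lemma \ref{lem09}), the geometric iteration $r_k=\rho^k r_0$, $\omega_{k+1}=\max\{(1-\eta)\omega_k, C r_k^{\varepsilon_0}\}$ (Lemmas \ref{LEMD05}--\ref{LEM83} and Proposition \ref{PRO90}), and finally the translation back to the Euclidean parabolic distance. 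Where you genuinely diverge is in the mechanism for the one step that carries all the weight, namely the quantitative oscillation reduction itself: you assert it via ``the usual two alternatives'' plus logarithmic estimates in the classical DiBenedetto style, whereas the paper proves it (Proposition \ref{PRO01}) by first expanding the time-slice information with the energy estimates (Lemma \ref{LEM0035}; the logarithmic route you have in mind appears only as the alternative proof in the Appendix), and then performing the exponential change of variables \eqref{EXP01} from DiBenedetto--Gianazza--Vespri, which turns the admissible time interval into $[0,\infty)$ for the transformed solution $\tilde u$; only on that unbounded interval are the distribution-decay estimate (Lemma \ref{lem005}, via the isoperimetric Lemma \ref{prop002}) and the De Giorgi iteration (Lemma \ref{LEM090}) run, before rescaling back. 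In your sketch this entire chain is compressed into one sentence, so the core of the theorem is asserted rather than proved; if you want to avoid the exponential substitution you must instead carry out the full first/second-alternative analysis with the weighted logarithmic inequalities, which is precisely the technical work the sketch omits.

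Two further points need attention. First, there is a start-up circularity in your Step one: the cylinder $Q_{r,\omega}$ is built from a number $\omega$ that is ``intended to control'' the oscillation inside it, but this cannot be assumed a priori for a degenerate equation ($p>2$). The paper resolves this by measuring the oscillation over the larger, $\omega$-independent cylinder $Q(2R,R^{p+\vartheta-\bar\varepsilon_0})$ and observing that under the alternative $\omega>AR^{\varepsilon_0}$ one has $Q(R,a_0R^{p+\vartheta})\subset Q(2R,R^{p+\vartheta-\bar\varepsilon_0})$, so that \eqref{AQ821} holds; your induction needs an analogous initialization, and the nesting constraint $\omega_{k+1}^{2-p}r_{k+1}^{p+\vartheta}\leq\omega_k^{2-p}r_k^{p+\vartheta}$ alone does not supply it. Second, the final sentence ``translating back \ldots produces exactly \eqref{QNAW001ZW}'' hides a small but necessary chaining argument: the oscillation decay is only available in cylinders of time-height $\hat a_0\rho^{p+\vartheta}$ with $\hat a_0$ a fixed constant (bounded below because $\|u\|_{L^\infty}\leq\mathcal{M}$), so for $|t-t_0|>\hat a_0 2^{-(p+\vartheta)}$ one must interpolate through finitely many intermediate times, as in the paper's proof of Theorem \ref{ZWTHM90}; this is routine but should be stated. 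Your claim that the forcing contributes ``$\|\phi\|_{L^{l_0}}r^{p\varepsilon_0}$'' is also not the form that actually comes out of the energy estimate (the correct statement is that the $\phi$-term can be absorbed precisely when $\omega>AR^{\varepsilon_0}$), though the role you assign to $\varepsilon_0$ as the exit exponent is the right one.
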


\begin{remark}
Fix the values of $n,p,l_{0}$. The upper bound exponent $\varepsilon_{0}$ tends to zero, as $\theta_{1}+\theta_{2}\rightarrow p(l_{0}-1)-n$. Moreover, we see from Remark \ref{W98} below that when $\varepsilon_{0}$ is a sufficiently small positive constant, the value of $\alpha$ in \eqref{QNAW001ZW} can attain the upper bound $\varepsilon_{0}$. In this case, if we let the value of $\theta_{1}+\theta_{2}$ increase towards to $p(l_{0}-1)-n$ and meanwhile decrease the value of $\theta_{3}+\theta_{4}$, then the regularity exponents $\varepsilon_{0}$ and $\frac{\varepsilon_{0}}{p+\vartheta}$ corresponding to the space and time variables all decrease, which shows the weakening effect of the weights on the regularity of $u$ and their reduction effect on the diffusion rate of the flows. An intriguing question naturally arises from these above facts. To be specific,

\emph{\textbf{Question.} Fix the values of $n,p,l_{0},\theta_{3},\theta_{4}$. When the value of $\theta_{1}+\theta_{2}$ is sufficiently close to $p(l_{0}-1)-n$, whether $\varepsilon_{0}$ can become the optimal exponent in \eqref{QNAW001ZW}. If this is true, it proceeds to find the threshold value of $\varepsilon_{0}$.}

\end{remark}

When the weights $w_{i}$, $i=1,2$ are chosen to be the same type of single power-type weights and the considered equation becomes non-homogeneous weighted parabolic $p$-Laplace equation as follows:
\begin{align}\label{PO009}
\begin{cases}
w_{1}\partial_{t}u-\mathrm{div}(w_{2}|\nabla u|^{p-2}\nabla u)=w_{2}(\lambda_{3}|\nabla u|^{p-1}+\phi_{3}(x,t)),& \mathrm{in}\;\Omega_{T},\\
\|u\|_{L^{\infty}(\Omega_{T})}\leq \mathcal{M}<\infty,
\end{cases}
\end{align}
we further establish the H\"{o}lder estimates for the solution as follows.
\begin{theorem}\label{THM060}
Set $p>2$, $n\geq2$. Let \eqref{ZE90}--\eqref{E01} and $\mathrm{(}\mathbf{K1}\mathrm{)}$--$\mathrm{(}\mathbf{K2}\mathrm{)}$ hold. Suppose that $u$ is a weak solution of problem \eqref{PO009} with $\Omega\times(-T,0]=B_{1}\times(-1,0]$. Then there exists a small constant $0<\tilde{\alpha}<\frac{\alpha}{1+\alpha}$ with $\alpha$ determined by Theorem \ref{ZWTHM90}, such that if  $(w_{1},w_{2})=(|x'|^{\theta_{1}},|x'|^{\theta_{3}})$ or $(w_{1},w_{2})=(|x|^{\theta_{2}},|x|^{\theta_{4}})$,
\begin{align}\label{MWQ099}
|u(x,t)-u(y,s)|\leq C\big(|x-y|+|t-s|^{\frac{1}{p+\vartheta}}\big)^{\tilde{\alpha}},
\end{align}
for any $(x,t),(y,s)\in B_{1/4}\times(-1/2,0],$ where $\vartheta$ is given by \eqref{VAR01}.
\end{theorem}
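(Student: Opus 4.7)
The plan is to upgrade the pointwise decay at the origin provided by Theorem \ref{ZWTHM90} into a full interior H\"older estimate on $B_{1/4}\times(-1/2,0]$, by using the translation invariances of the single-weight equation \eqref{PO009} to propagate the Theorem \ref{ZWTHM90} bound along the whole singular set of the weight, and then patching with standard interior theory for the non-weighted parabolic $p$-Laplace operator away from that set. Denote the singular set in $\bR^n\times(-T,0]$ by $\Sigma$: when $(w_1,w_2)=(|x'|^{\theta_1},|x'|^{\theta_3})$ the equation is invariant under translations in $x_n$ and $t$, so $\Sigma=\{x'=0\}$; when $(w_1,w_2)=(|x|^{\theta_2},|x|^{\theta_4})$ only $t$-translations preserve the equation, so $\Sigma=\{x=0\}$. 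Throughout, $d(P,P'):=|x-x'|+|t-t'|^{1/(p+\vartheta)}$ denotes the intrinsic parabolic metric of \eqref{QNAW001ZW}, $Q_\rho(P_0):=B_\rho(x_0)\times(t_0-\rho^{p+\vartheta},t_0]$ the associated cylinder, and $d_\Sigma(P)$ the $d$-distance from $P$ to $\Sigma$.

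Step 1 (boundary estimate along $\Sigma$). For any $P_*=(x_*,t_*)\in\Sigma\cap(B_{1/2}\times(-1/2,0])$, the translation sending $P_*$ to the origin of the appropriate coordinates reduces \eqref{PO009} to itself on a smaller cylinder, so applying Theorem \ref{ZWTHM90} to the translated problem gives $|u(P)-u(P_*)|\leq C\,d(P,P_*)^{\alpha}$, and hence $\mathrm{osc}_{Q_\rho(P_*)}u\leq C\rho^{\alpha}$ for $\rho$ small. Step 2 (interior estimate away from $\Sigma$). For $P_0=(x_0,t_0)$ with $\rho:=d_\Sigma(P_0)>0$, set $v(y,\tau):=u(x_0+(\rho/2)y,\,t_0+(\rho/2)^{p+\vartheta}\tau)$ on $B_1\times(-1,0]$. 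On this set the active weight ($|x'|^{\theta_i}$ or $|x|^{\theta_i}$) is comparable to $\rho^{\theta_i}$ and becomes a smooth $O(1)$ function of $y$ after dividing through by $\rho^{\theta_i}$; the identities $\theta_1-\theta_3=\vartheta$ (respectively $\theta_2-\theta_4=\vartheta$) guarantee that the scaling factors in the time and divergence terms balance after factoring out $\rho^{p+\theta_3}$ (resp.\ $\rho^{p+\theta_4}$), so $v$ solves a non-homogeneous (non-weighted) parabolic $p$-Laplace equation on $B_1\times(-1,0]$ with smooth bounded coefficients and forcing controlled in $L^{l_0}$ uniformly in $\rho$. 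DiBenedetto's interior H\"older theory \cite{D1986} then yields, for some $\alpha_0\in(0,1)$ depending only on the data,
\[
\mathrm{osc}_{Q_\sigma(P_0)}u\leq C\bigl(\mathrm{osc}_{Q_{\rho/2}(P_0)}u\bigr)(\sigma/\rho)^{\alpha_0}\quad\text{for }0<\sigma\leq\rho/4.
\]

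Step 3 (combining the two estimates). Given $P_1,P_2\in B_{1/4}\times(-1/2,0]$, set $d:=d(P_1,P_2)$ and $r:=\min\{d_\Sigma(P_1),d_\Sigma(P_2)\}$. When $d\geq r/4$, both $P_1$ and $P_2$ lie within $d$-distance $O(r)=O(d)$ of some $P_*\in\Sigma$, so two applications of Step 1 and the triangle inequality give $|u(P_1)-u(P_2)|\leq Cd^{\alpha}$. When $d<r/4$, combining Step 1's oscillation bound $\mathrm{osc}_{Q_r(P_1)}u\leq Cr^{\alpha}$ with Step 2 yields
\[
|u(P_1)-u(P_2)|\leq C\min\bigl\{r^{\alpha},\,r^{\alpha-\alpha_0}d^{\alpha_0}\bigr\}\leq C\,r^{\alpha t+(\alpha-\alpha_0)(1-t)}\,d^{\alpha_0(1-t)},\quad t\in[0,1].
\]
Choosing $t=\alpha_0/(\alpha+\alpha_0)$ eliminates the $r$-dependence and produces $|u(P_1)-u(P_2)|\leq C d^{\alpha\alpha_0/(\alpha+\alpha_0)}$. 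Since $\alpha_0<1$, a short computation shows $\alpha\alpha_0/(\alpha+\alpha_0)<\alpha/(1+\alpha)$, so $\tilde\alpha:=\alpha\alpha_0/(\alpha+\alpha_0)$ meets the requirement in \eqref{MWQ099}.

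The hardest part is Step 2: the rescaled equation contains a drift coming from $\nabla w_i/w_i$, of size $1/|x'|$ (resp.\ $1/|x|$) in the original variables and hence $O(\rho^{-1})$ on $B_{\rho/2}(x_0)$, which becomes an $O(1)$ smooth coefficient in the rescaled variables; one must also verify that the inhomogeneity $\phi_3$ passes through the rescaling in $L^{l_0}$ with the DiBenedetto scaling condition preserved uniformly in $\rho$, which is precisely where the lower bound \eqref{E01} on $l_0$ plays its part.
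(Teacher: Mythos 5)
Your proposal is correct and its architecture is essentially that of the paper's proof: (i) a H\"older bound along the singular set obtained by translating Theorem \ref{ZWTHM90} (the paper's \eqref{QM916}); (ii) an interior estimate at distance $\rho$ from the singular set obtained by rescaling; (iii) a balancing of the two bounds that yields an exponent of the form $\frac{\alpha\gamma}{\alpha+\gamma}$, which is exactly the paper's final exponent $\frac{\alpha\gamma}{\alpha+\gamma}<\frac{\alpha}{1+\alpha}$. The implementations of (ii) and (iii) differ. In (ii) the paper uses the exact invariance of the single-power-weight equation under $(x,t)\mapsto(Rx,R^{p+\vartheta}t)$ with $R=|x'|$ and re-runs its weighted machinery near points with $|\bar y'|=1$ (estimate \eqref{WAQA001}); you instead freeze the weight, divide it out, and quote the unweighted theory of \cite{D1986}, correctly observing that the drift $\nabla w_i/w_i$ becomes an $O(1)$ coefficient absorbable into $(\mathbf{H3})$ (the bounded positive factor multiplying $\partial_\tau v$ must be normalized in the same way, which again only feeds the structure conditions $(\mathbf{H1})$--$(\mathbf{H3})$). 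In (iii) the paper argues coordinatewise with optimized exponents $c_1,c_2$, while you use the dichotomy $d\gtrless r/4$ plus interpolation; these are interchangeable (note only that your choice $t=\alpha_0/(\alpha+\alpha_0)$ leaves a factor $r^{\alpha^{2}/(\alpha+\alpha_0)}$ rather than eliminating $r$, which is harmless since $r\lesssim1$). The one point you should make explicit in (ii): when the active weight exponent is negative (possible when $p<n$), \eqref{E01} only guarantees $l_0>\frac{n+p+\theta_1}{p}$, which may fall below the threshold $\frac{n+p}{p}$ required to invoke \cite{D1986} verbatim for the frozen, unweighted equation, and the smallness factor $\rho^{\,p-\frac{n+p+\theta_1}{l_0}}$ carried by the rescaled forcing does not repair this scaling deficit at arbitrarily small interior scales. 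The paper's own justification of \eqref{WAQA001} (``the proof of Theorem \ref{ZWTHM90} with minor modification'') is terse at precisely the same spot, so this is not a defect peculiar to your route; but if you cite \cite{D1986} you should either restrict to nonnegative exponents or carry out the interior step inside the paper's weighted framework.
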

\begin{remark}
According to the classical regularity theory for quasilinear parabolic equations, we know that the solution should be of $C^{1,\alpha}$ at the points away from the degenerate or singular points of the weights. This means that the regularity of solution to the considered weighted equation is actually determined by the behavior of the solution near these singular or degenerate points. Therefore, although we have provided a clear control relationship between the decay rate exponent $\alpha$ and the H\"{o}lder regularity exponent $\tilde{\alpha}$ in Theorem \ref{THM060}, it is worth expecting that the value of $\tilde{\alpha}$ in \eqref{MWQ099} can be further improved to be sufficiently close to the value of $\alpha$ in \eqref{QNAW001ZW}. The reason why we don't achieve this improvement is purely technical, and the problem on how to achieve the improvement remains open.

\end{remark}

\begin{remark}
For any given $R_{0}>0$, when we replace $B_{1}\times(-1,0]$ with $B_{R_{0}}\times(-R_{0}^{p+\vartheta},0]$ in Theorems \ref{ZWTHM90} and \ref{THM060}, we deduce from their proofs with minor modification that the results in \eqref{QNAW001ZW} and \eqref{MWQ099} hold with $(-R^{p+\vartheta}_{0}/2,0]$, $B_{R_{0}/2}\times(-R_{0}^{p+\vartheta}/2,t_{0}]$ and $B_{R_{0}/4}\times(-R^{p+\vartheta}_{0}/2,0]$ substituting for $(-1/2,0]$, $B_{1/2}\times(-1/2,t_{0}]$ and $B_{1/4}\times(-1/2,0]$, respectively. A difference is that the constant $C$ will depend upon $R_{0}$, but the exponents $\alpha$ and $\tilde{\alpha}$ not.

\end{remark}

\begin{remark}
Our results in Theorems \ref{ZWTHM90} and \ref{THM060} are novel and illuminating, which further develop and enrich the regularity theory of solutions to quasilinear parabolic equations.

\end{remark}

In order to complete the proofs of Theorems \ref{ZWTHM90} and \ref{THM060}, the key lies in making clear the behavior of the solution near the singular or degenerate point of the weights, which will be achieved by combining intrinsic scaling technique developed in \cite{D1986} and exponential variable substitution introduced in \cite{DGV2008}. The first step is to establish local energy estimates in Section \ref{SEC03}, which are the building blocks of the method of intrinsic scaling. Before that, we do some preliminary work in Section \ref{SEC002}. Subsequently, Theorems \ref{ZWTHM90} and \ref{THM060} are proved in Section \ref{SEC05}. The proofs rely on the establishments of three main ingredients required in the parabolic version of De Giorgi truncation method (see e.g. \cite{MZ2023,JX2022}), which consist of the expansion of time, the decay estimates and the oscillation improvement of the solution, see Lemmas \ref{LEM0035}, \ref{lem005} and \ref{LEM090} below for finer details. Remark that the idea of intrinsic scaling is to choose suitably rescaled cylinders whose dimensions accommodate the singularity and degeneracy of the equations and weights. By working with these cylinders, we recover the homogeneity between the space and time variables such that anisotropic weighted parabolic Sobolev inequality in Proposition \ref{prop001} can be applied to the improvement on oscillation of the solution in Lemma \ref{LEM090}. By comparison, the purpose of exponential variable substitution is to produce sufficiently large time interval for the transformed solution, which makes intrinsic scaling technique be successfully used to establish the desired decay estimates and improve the oscillation of the solution in a large cylinder. Finally, we give an alternative proof for the expansion of time by making use of the logarithmic estimates in the Appendix.

It is worth emphasizing that the anisotropic weights considered here will greatly increase the difficulties of analyses and computations due to their sophisticated forms. Especially it leads to distinct differences in the process of the technical implementation compared to the unweighted case in \cite{D1986,DGV2008}. In this paper, we optimize the proof procedures as much as possible by picking the concise conditions in \eqref{ZE90}--\eqref{E01} and presenting the proofs in the style resembling the classical De Giorgi truncation method of parabolic version. These improvements contribute to deepening the readers' understanding on intrinsic scaling technique and exponential variable substitution. More importantly, we capture an explicit upper bound $\varepsilon_{0}$ on the decay rate exponent $\alpha$ in Theorem \ref{ZWTHM90}, which can be attained under certain condition and meanwhile reveals the damage effect of the weights on the regularity of the solution and their weakening effect on the diffusion of the flows. Our results are new and allow for a lot of generalizations to more types of nonlinear weighted parabolic equations in future work.

\section{Preliminary}\label{SEC002}

We start by proving the following inequality.
\begin{lemma}\label{Lem865}
For any $a,b\geq0$ and $p>1$, we obtain that for any $\varepsilon>0$,

$(i)$ if $p>1$ is an integer,
\begin{align*}
(a+b)^{p}\leq(1+\varepsilon)a^{p}+\frac{C(p)}{\varepsilon^{p-1}}b^{p};
\end{align*}

$(ii)$ if $p>1$ is not an integer,
\begin{align*}
(a+b)^{p}\leq (1+\varepsilon)a^{p}+\frac{C(p)}{\varepsilon^{\frac{[p]}{p-[p]}}}b^{p},
\end{align*}
where $[p]$ denotes the integer part of $p$.
\end{lemma}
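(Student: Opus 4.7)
The plan is to treat the two cases by a binomial-type expansion followed by a weighted Young inequality, tracking in each case which mixed monomial contributes the worst (largest) inverse-power of $\varepsilon$.

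For case (i), with $p$ a positive integer, I would begin from the binomial theorem
\[
(a+b)^p = a^p + \sum_{k=1}^{p} \binom{p}{k} a^{p-k} b^k,
\]
and then, for each $1\leq k\leq p$, apply Young's inequality with conjugate exponents $p/(p-k)$ and $p/k$ after inserting a scaling parameter $\delta>0$:
\[
a^{p-k}b^{k} \leq \delta\,\tfrac{p-k}{p}\,a^p + \tfrac{k}{p}\,\delta^{-(p-k)/k}\,b^p.
\]
Summing over $k$ and choosing $\delta$ of order $\varepsilon$ so that the accumulated coefficient of $a^p$ past the leading term equals $\varepsilon$, the largest of the exponents $(p-k)/k$ is attained at $k=1$, equal to $p-1$. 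Collecting constants yields the assertion with $C(p)\varepsilon^{-(p-1)}$.

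For case (ii), with $p$ non-integer, I would use the subadditivity of the concave map $t\mapsto t^{\,p-[p]}$ (valid since $p-[p]\in(0,1)$), namely $(a+b)^{p-[p]}\leq a^{p-[p]} + b^{p-[p]}$, to write
\[
(a+b)^p = (a+b)^{[p]}(a+b)^{p-[p]} \leq (a+b)^{[p]}\bigl(a^{p-[p]} + b^{p-[p]}\bigr),
\]
and then expand $(a+b)^{[p]}$ by the binomial theorem. After distributing, the right-hand side is $a^p$ plus a finite sum of mixed monomials $a^\alpha b^\beta$ with $\alpha+\beta=p$ and $\beta>0$. Each such monomial is again handled by Young's inequality with exponents $p/\alpha$ and $p/\beta$, producing the exponent $\alpha/\beta$ on $\delta^{-1}$. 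A direct inspection of the finitely many $(\alpha,\beta)$ pairs appearing in this expansion shows that the maximum of $\alpha/\beta$ is realised by the monomial $a^{[p]}b^{\,p-[p]}$ (the one with smallest power of $b$), giving the exponent $[p]/(p-[p])$. Taking $\delta$ of order $\varepsilon$ then closes the argument.

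The only non-routine point is verifying that $[p]/(p-[p])$ genuinely dominates the competing candidate exponents that appear. The most dangerous competitor is $p-1$, coming from the monomial $a^{p-1}b$; but the inequality $(p-1)(p-[p])\leq[p]$ reduces, after clearing, to $p-[p]\leq 1$, which holds by definition of $[p]$. All other $\alpha/\beta$ in the expansion are strictly smaller, so no obstacle arises. The whole argument is elementary once this bookkeeping step is carried out, and I expect no technical difficulty beyond identifying the worst monomial and choosing $\delta$ proportional to $\varepsilon$.
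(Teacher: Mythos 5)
Your proposal is correct and follows essentially the same route as the paper: part (i) is the binomial theorem combined with a weighted Young inequality (worst term at $k=1$ giving $\varepsilon^{-(p-1)}$), and part (ii) uses the same splitting $p=[p]+(p-[p])$, the subadditivity of $t\mapsto t^{p-[p]}$, and Young's inequality on the resulting mixed monomials, with the worst exponent $[p]/(p-[p])$ dominating $p-1$ exactly as you check. The only cosmetic difference is that the paper applies part (i) as a black box to $(a+b)^{[p]}$ and then treats the two cross terms, whereas you re-expand binomially and optimize over all monomials, which amounts to the same bookkeeping.
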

\begin{proof}
On one hand, if $p>1$ is an integer, we deduce from the binomial theorem and Young's inequality that
\begin{align}\label{MZ009}
(a+b)^{p}=&a^{p}+\sum^{p}_{j=1}C_{p}^{j}a^{p-j}b^{j}\leq(1+\varepsilon)a^{p}+C(p)b^{p}\sum^{p}_{j=1}\varepsilon^{-\frac{p-j}{j}}\notag\\ \leq&(1+\varepsilon)a^{p}+\frac{C(p)}{\varepsilon^{p-1}}b^{p},\quad C_{p}^{j}=\frac{p!}{j!(p-j)!},
\end{align}
where $j!$ represents the factorial of $j$.

On the other hand, if $p>1$ is not an integer, using \eqref{MZ009}, we obtain
\begin{align}\label{MZ013}
(a+b)^{p}=&(a+b)^{[p]+(p-[p])}\leq\left((1+\varepsilon)a^{[p]}+\frac{C(p)}{\varepsilon^{[p-1]}}b^{[p]}\right)\left(a^{p-[p]}+b^{p-[p]}\right)\notag\\
=&(1+\varepsilon)a^{p}+(1+\varepsilon)a^{[p]}b^{p-[p]}+\frac{C(p)}{\varepsilon^{[p-1]}}a^{p-[p]}b^{[p]}+\frac{C(p)}{\varepsilon^{[p-1]}}b^{p-2}.
\end{align}
A consequence of Young's inequality gives that
\begin{align*}
&(1+\varepsilon)a^{[p]}b^{p-[p]}\leq\varepsilon a^{p}+\frac{C(p)}{\varepsilon^{\frac{[p]}{p-[p]}}}b^{p},\quad\frac{C(p)}{\varepsilon^{[p-1]}}a^{p-[p]}b^{[p]}\leq\varepsilon a^{p}+\frac{C(p)}{\varepsilon^{p-1}}b^{p}.
\end{align*}
Substituting the above inequalities into \eqref{MZ013}, we have
\begin{align*}
(a+b)^{p}\leq(1+3\varepsilon)a^{p}+\frac{C(p)}{\varepsilon^{\frac{[p]}{p-[p]}}}b^{p}.
\end{align*}
The proof is complete.

\end{proof}

For $r,p,q>1$, let $f\in C((-1,0];L^{p}(B_{1},w_{1}))\cap L^{r}((-1,0);L^{q}(B_{1},w_{2}))$. Introduce the Steklov average as follows: for $0<h\ll1$,
\begin{align*}
f_{h}(x,t)=
\begin{cases}
\frac{1}{h}\int^{t}_{t-h}f(x,s)ds,& t\in(-1+h,0),\\
0,&t\leq-1+h,
\end{cases}
\end{align*}
and
\begin{align*}
f_{\bar{h}}(x,t)=
\begin{cases}
\frac{1}{h}\int^{t+h}_{t}f(x,s)ds,& t\in(-1,-h),\\
0,&t\geq-h.
\end{cases}
\end{align*}
\begin{lemma}\label{LEMA90}
Assume that $(\theta_{1},\theta_{2}),(\theta_{3},\theta_{4})\in\mathcal{A}\cup\mathcal{B}$. For $r,p,q>1$, if
$$f\in C((-1,0];L^{p}(B_{1},w_{1}))\cap L^{r}((-1,0);L^{q}(B_{1},w_{2})),$$
then for any $\delta\in(0,1)$,
\begin{align*}
\sup\limits_{t\in[-1+\delta,0]}\|(f_{h}-f)(\cdot,t)\|_{L^{p}(B_{1},w_{1})}\rightarrow0,\quad\text{as }h\rightarrow0,
\end{align*}
and
\begin{align*}
\left(\int^{0}_{-1+\delta}\|(f_{h}-f)(\cdot,t)\|_{L^{q}(B_{1},w_{2})}^{r}dt\right)^{\frac{1}{r}}\rightarrow0,\quad\text{as }h\rightarrow0.
\end{align*}

\end{lemma}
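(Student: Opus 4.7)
The plan is to treat the two convergence statements separately, relying on the fact that under $(\theta_1,\theta_2),(\theta_3,\theta_4)\in\mathcal{A}\cup\mathcal{B}$ the weighted measures $w_1\,dx$ and $w_2\,dx$ are finite Radon measures on $B_1$, so the weighted $L^p$ spaces behave like ordinary $L^p$ spaces with respect to Bochner integration. Throughout, I will restrict to $0<h<\delta$ so that on $[-1+\delta,0]$ the Steklov average is always given by the nontrivial formula and no boundary correction at $t=-1$ is needed.

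For the first claim, I would use that $t\mapsto f(\cdot,t)$ is a continuous map from $(-1,0]$ into $L^p(B_1,w_1)$, hence uniformly continuous on the compact subinterval $[-1+\delta/2,0]$. For $t\in[-1+\delta,0]$ and $h<\delta/2$ I write
\begin{equation*}
f_h(x,t)-f(x,t)=\frac{1}{h}\int_{t-h}^{t}\big(f(x,s)-f(x,t)\big)\,ds,
\end{equation*}
apply Minkowski's integral inequality in $L^p(B_1,w_1)$, and bound the result by $\sup_{|s-t|\le h}\|f(\cdot,s)-f(\cdot,t)\|_{L^p(B_1,w_1)}$, which tends to $0$ uniformly in $t$ by uniform continuity.

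For the second claim, I would argue by density plus uniform boundedness. First, Minkowski's inequality applied pointwise in $t$ gives the operator bound
\begin{equation*}
\Big(\int_{-1+\delta}^{0}\|f_h(\cdot,t)\|_{L^q(B_1,w_2)}^{r}\,dt\Big)^{1/r}\le \Big(\int_{-1}^{0}\|f(\cdot,t)\|_{L^q(B_1,w_2)}^{r}\,dt\Big)^{1/r},
\end{equation*}
so the map $f\mapsto f_h$ is uniformly bounded on $L^r((-1,0);L^q(B_1,w_2))$. Next, for $g\in C_c((-1,0);L^q(B_1,w_2))$, the same uniform continuity argument as above yields $\sup_t\|g_h(\cdot,t)-g(\cdot,t)\|_{L^q(B_1,w_2)}\to 0$, hence convergence in $L^r$. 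Since such $g$ are dense in $L^r((-1,0);L^q(B_1,w_2))$, an $\varepsilon/3$-argument combining the density approximation with the uniform operator bound on $f-g$ and the convergence for $g$ yields the conclusion for $f$.

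The only delicate point is the behavior near the left endpoint $t=-1$, where $f_h$ is defined to vanish; this is harmless once one observes that the hypothesis $h<\delta$ makes the cutoff invisible on $[-1+\delta,0]$. No weighted-specific inequality (Poincaré, Sobolev, Muckenhoupt, etc.) is required — only that $w_1\,dx$ and $w_2\,dx$ are finite measures so that standard Bochner/Minkowski reasoning applies verbatim.
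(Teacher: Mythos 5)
Your proof is correct. For the first convergence, your argument (Minkowski's integral inequality in $L^{p}(B_{1},w_{1})$ plus uniform continuity of $t\mapsto f(\cdot,t)$ on a compact subinterval of $(-1,0]$) is exactly the paper's argument, with the uniform-continuity step made explicit. For the second convergence you take a slightly different route: the paper applies Minkowski's inequality twice to reduce the claim to
\begin{equation*}
\sup_{-h\le s\le 0}\Big(\int_{-1+\delta}^{0}\|f(\cdot,t+s)-f(\cdot,t)\|_{L^{q}(B_{1},w_{2})}^{r}\,dt\Big)^{1/r}\rightarrow 0,
\end{equation*}
and then simply invokes continuity of (Bochner--)Lebesgue integrals under translation, whereas you reprove that fact from scratch: uniform boundedness of $f\mapsto f_{h}$ on $L^{r}((-1,0);L^{q}(B_{1},w_{2}))$ via Minkowski, convergence for the dense class $C_{c}((-1,0);L^{q}(B_{1},w_{2}))$, and an $\varepsilon/3$ splitting. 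The two arguments are mathematically equivalent (the density-plus-boundedness scheme is the standard proof of the translation-continuity statement the paper cites), so what you gain is self-containedness at the cost of a few extra lines; the paper's version is shorter because it treats translation continuity in the Bochner setting as known. Your observations that only finiteness/Radon-ness of $w_{1}dx,w_{2}dx$ (guaranteed by $(\theta_{1},\theta_{2}),(\theta_{3},\theta_{4})\in\mathcal{A}\cup\mathcal{B}$) is needed, and that taking $h<\delta$ removes any issue with the cutoff at $t=-1$, are consistent with how the paper uses the lemma.
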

\begin{proof}
From Minkowski's inequality, we obtain that as $h\rightarrow0$,
\begin{align*}
\|(f_{h}-f)(\cdot,t)\|_{L^{p}(B_{1},w_{1})}\leq&\frac{1}{h}\int^{t}_{t-h}\|f(\cdot,s)-f(\cdot,t)\|_{L^{p}(B_{1},w_{1})}ds\notag\\
\leq&\sup\limits_{t-h\leq s\leq t}\|f(\cdot,s)-f(\cdot,t)\|_{L^{p}(B_{1},w_{1})}\rightarrow0.
\end{align*}
Similarly, using Minkowski's inequality twice, we have
\begin{align*}
&\left(\int^{0}_{-1+\delta}\|(f_{h}-f)(\cdot,t)\|_{L^{q}(B_{1},w_{2})}^{r}dt\right)^{\frac{1}{r}}\notag\\
&\leq\frac{1}{h}\int_{-h}^{0}\left(\int^{0}_{-1+\delta}\|f(\cdot,t+s)-f(\cdot,t)\|_{L^{q}(B_{1},w_{2})}^{r}dt\right)^{\frac{1}{r}}ds\notag\\
&\leq\sup\limits_{-h\leq s\leq0}\left(\int^{0}_{-1+\delta}\|f(\cdot,t+s)-f(\cdot,t)\|_{L^{q}(B_{1},w_{2})}^{r}dt\right)^{\frac{1}{r}}\rightarrow0,\quad\text{as }h\rightarrow0,
\end{align*}
where in the last inequality we utilized the continuity of Lebesgue integrals with respect to translations.

\end{proof}

Remark that Lemmas \ref{Lem865} and \ref{LEMA90} will be used in the process of establishing local energy estimates in Section \ref{SEC03}. We next state anisotropic weighted isoperimetric inequality and parabolic Sobolev embedding theorem.

\begin{lemma}\label{prop002}
For $n\geq2$ and $1<p<\infty$, let $(\theta_{1},\theta_{2})\in[(\mathcal{A}\cup\mathcal{B})\cap(C_{p}\cup\mathcal{D}_{p})]\cup\{\theta_{1}=0,\,\theta_{2}\geq n(p-1)\}$. Then there exists some constant $1<p_{\ast}=p_{\ast}(n,p,\theta_{1},\theta_{2})<p$ such that for any $R>0$, $l>k$ and $u\in W^{1,p_{\ast}}(B_{R},w_{1})$,
\begin{align}\label{pro001}
&(l-k)^{p_{\ast}}\bigg(\int_{\{u\geq l\}\cap B_{R}}w_{1}dx\bigg)^{p_{\ast}}\int_{\{u\leq k\}\cap B_{R}}w_{1}dx\notag\\
&\leq C(n,p,\theta_{1},\theta_{2})R^{p_{\ast}(n+\theta_{1}+\theta_{2}+1)}\int_{\{k<u<l\}\cap B_{R}}|\nabla u|^{p_{\ast}}w_{1}dx,
\end{align}
and
\begin{align*}
&(l-k)^{p_{\ast}}\bigg(\int_{\{u\leq k\}\cap B_{R}}w_{1}dx\bigg)^{p_{\ast}}\int_{\{u\geq l\}\cap B_{R}}w_{1}dx\notag\\
&\leq C(n,p,\theta_{1},\theta_{2})R^{p_{\ast}(n+\theta_{1}+\theta_{2}+1)}\int_{\{k<u<l\}\cap B_{R}}|\nabla u|^{p_{\ast}}w_{1}dx,
\end{align*}
where $w_{1}=|x'|^{\theta_{1}}|x|^{\theta_{2}}dx.$
\end{lemma}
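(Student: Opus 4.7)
The plan is to reduce the stated inequality to the weighted $(p_\ast, p_\ast)$-Poincar\'e inequality applied to a suitable truncation of $u$. First I would invoke the fact, noted right above the lemma, that under the assumed range of $(\theta_1, \theta_2)$ the weight $w_1 = |x'|^{\theta_1} |x|^{\theta_2}$ lies in the Muckenhoupt class $A_p$; the standard self-improvement property of Muckenhoupt weights then furnishes some $p_\ast \in (1, p)$ (depending only on $n, p, \theta_1, \theta_2$) with $w_1 \in A_{p_\ast}$. This yields
\begin{equation*}
\int_{B_R} |v - \bar v|^{p_\ast} w_1 dx \leq C R^{p_\ast} \int_{B_R} |\nabla v|^{p_\ast} w_1 dx, \quad \bar v := \frac{\int_{B_R} v w_1 dx}{\int_{B_R} w_1 dx},
\end{equation*}
for every $v \in W^{1, p_\ast}(B_R, w_1)$.

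Next I would apply this to the truncation $v := \min\{(u - k)_+, l - k\}$, which vanishes on $A_k := \{u \leq k\} \cap B_R$, equals $l - k$ on $A_l := \{u \geq l\} \cap B_R$, and whose gradient is supported on $\{k < u < l\} \cap B_R$ and agrees with $|\nabla u|$ there. Writing $\mu_1(E) := \int_E w_1 dx$, the pointwise estimate $v \geq (l - k) \chi_{A_l}$ gives $\bar v \geq (l - k) \mu_1(A_l) / \mu_1(B_R)$, and restricting the left-hand side of the Poincar\'e inequality to $A_k$, where $|v - \bar v| = \bar v$, produces
\begin{equation*}
\left(\frac{(l - k) \mu_1(A_l)}{\mu_1(B_R)}\right)^{p_\ast} \mu_1(A_k) \leq C R^{p_\ast} \int_{\{k < u < l\} \cap B_R} |\nabla u|^{p_\ast} w_1 dx.
\end{equation*}
Rearranging and using the explicit bound $\mu_1(B_R) \leq C R^{n + \theta_1 + \theta_2}$ (valid since $(\theta_1, \theta_2) \in \mathcal{A} \cup \mathcal{B}$) transforms this into the claimed inequality \eqref{pro001}. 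For the companion bound, with the roles of $A_k$ and $A_l$ interchanged, I would repeat the argument verbatim with $\tilde v := \min\{(l - u)_+, l - k\}$ replacing $v$, which gives $\bar{\tilde v} \geq (l-k) \mu_1(A_k)/\mu_1(B_R)$ and the analogous chain of estimates.

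The main obstacle is locating the correct weighted Poincar\'e inequality together with the self-improvement exponent $p_\ast$ in the anisotropic Muckenhoupt setting. This is routine when $(\theta_1, \theta_2) \in (\mathcal{A} \cup \mathcal{B}) \cap (C_p \cup \mathcal{D}_p)$, where $w_1 \in A_p$ is established in \cite{MZ2023}; but the remaining regime $\{\theta_1 = 0, \theta_2 \geq n(p-1)\}$ lies outside $A_p$ and must instead be handled through the quasiconformal-mappings framework of \cite[Corollary 15.35]{HKM2006}, which still yields a Poincar\'e-type inequality of the required form with the same growth rate in $R$. Once both regimes have been unified into a single weighted Poincar\'e inequality valid for all $(\theta_1, \theta_2)$ in the stated range, the rest of the proof is the short truncation-and-averaging computation described above.
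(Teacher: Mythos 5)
Your truncation argument is correct and delivers exactly \eqref{pro001}: with $v=\min\{(u-k)_{+},l-k\}$ the weighted $(p_{\ast},p_{\ast})$-Poincar\'{e} inequality gives $(l-k)^{p_{\ast}}\mu_{1}(A_{l})^{p_{\ast}}\mu_{1}(A_{k})\leq CR^{p_{\ast}}\mu_{1}(B_{R})^{p_{\ast}}\int_{\{k<u<l\}}|\nabla u|^{p_{\ast}}w_{1}dx$, and the bound $\mu_{1}(B_{R})\leq CR^{n+\theta_{1}+\theta_{2}}$ (Lemma \ref{LEM860}, for balls centered at the origin, which is the paper's convention) produces the stated power $R^{p_{\ast}(n+\theta_{1}+\theta_{2}+1)}$; the symmetric companion bound follows with $\tilde v=\min\{(l-u)_{+},l-k\}$. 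The paper itself does not carry out this computation: its proof is a one-line citation of Proposition 2.10 in \cite{MZ2023} together with Theorem 15.13 in \cite{HKM2006}, i.e.\ it outsources precisely the admissibility/Poincar\'{e} input and the De Giorgi-type consequence that you reconstruct, so your proposal is essentially the argument underlying the paper's citation rather than a genuinely different method. The one point you should make explicit is the regime $\{\theta_{1}=0,\ \theta_{2}\geq n(p-1)\}$: there it is not enough to have ``a Poincar\'{e}-type inequality'' for the exponent $p$, because the strict inequality $p_{\ast}<p$ is exactly what is exploited later (H\"{o}lder splitting in Lemma \ref{lem005}). This is easily repaired, since the quasiconformal result (Corollary 15.35 in \cite{HKM2006}) gives $p_{\ast}$-admissibility of $|x|^{\theta_{2}}$ for \emph{every} exponent $p_{\ast}>1$ whenever $\theta_{2}>-n$, so the same $(p_{\ast},p_{\ast})$-Poincar\'{e} inequality is available with any $1<p_{\ast}<p$; alternatively, in the Muckenhoupt regime one can avoid invoking abstract self-improvement by noting that the defining inequalities of $\mathcal{C}_{p}\cup\mathcal{D}_{p}$ are strict, so $(\theta_{1},\theta_{2})\in\mathcal{C}_{p_{\ast}}\cup\mathcal{D}_{p_{\ast}}$ for some $p_{\ast}<p$ and hence $w_{1}\in A_{p_{\ast}}$ directly. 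With that clarification your proof is complete and matches the lemma's quantifiers and constants.
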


\begin{proof}
A direct consequence of Proposition 2.10 in \cite{MZ2023} and Theorem 15.13 in \cite{HKM2006} shows that Lemma \ref{prop002} holds.

\end{proof}

Let $n\geq2$, $p>1$, $R>0$, and $-1\leq t_{1}<t_{2}\leq0$. For $u\in C((t_{1},t_{2});L^{p}(B_{R},w_{1}))\cap L^{p}((t_{1},t_{2});W_{0}^{1,p}(B_{R},w_{2}))$, write
\begin{align*}
\|u\|_{V^{p}_{0}(B_{R}\times(t_{1},t_{2}),w_{1},w_{2})}=\bigg(\sup\limits_{t\in(t_{1},t_{2})}\int_{B_{R}}|u|^{p}w_{1}dx+\int^{t_{2}}_{t_{1}}\int_{B_{R}}|\nabla u|^{p}w_{2}dxdt\bigg)^{\frac{1}{p}},
\end{align*}
where $w_{1}$ and $w_{2}$ are given in \eqref{PO001}. We now state anisotropic weighted parabolic Sobolev inequality as follows.
\begin{prop}\label{prop001}
For $n\geq2$, $p>1$, $R>0$, $\theta_{1}+\theta_{2}>p-n$, and $-1\leq t_{1}<t_{2}\leq0$, let $u\in C((t_{1},t_{2});L^{p}(B_{R},w_{1}))\cap L^{p}((t_{1},t_{2});W_{0}^{1,p}(B_{R},w_{2}))$. Then
\begin{align*}
\|u\|_{L^{p\chi}(B_{R}\times(t_{1},t_{2}),w_{2})}\leq C(n,p,\theta_{1},\theta_{2})\|u\|_{V^{p}_{0}(B_{R}\times(t_{1},t_{2}),w_{1},w_{2})},
\end{align*}
where $\chi=\frac{n+p+\theta_{1}+\theta_{2}}{n+\theta_{1}+\theta_{2}}.$

\end{prop}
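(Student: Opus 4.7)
The plan is to follow the classical Ladyzhenskaya-type proof of the parabolic Sobolev embedding, adapted to the anisotropic weighted setting at hand. By a density/cut-off argument it suffices to treat $u \in C_{c}^{\infty}(B_{R}\times(t_{1},t_{2}))$. The strategy has three steps: a pointwise-in-$t$ mixed-weight Gagliardo--Nirenberg inequality, integration in time, and a convex-combination bound.

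The hypothesis $\theta_{1}+\theta_{2} > p-n$ says that the effective ``weighted dimension'' associated with $w_{1}$, namely $s := n+\theta_{1}+\theta_{2}$, strictly exceeds $p$; this is precisely what makes $\chi = (s+p)/s > 1$ the correct Sobolev exponent. The core step is to establish, for a.e.\ $t \in (t_{1},t_{2})$,
\[
\int_{B_{R}} |u(\cdot,t)|^{p\chi}\, w_{2}\, dx \leq C\,\Big(\int_{B_{R}} |u(\cdot,t)|^{p}\, w_{1}\, dx\Big)^{\chi-1}\int_{B_{R}} |\nabla u(\cdot,t)|^{p}\, w_{2}\, dx.
\]
The exponents $\chi-1=p/s$ and $1$ are dictated by the identity $p(\chi-1)+p=p\chi$ and by scale-invariance under $u \mapsto \lambda u$. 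I would derive this by combining a weighted spatial Sobolev embedding of the type $W^{1,p}_{0}(B_{R}) \hookrightarrow L^{ps/(s-p)}(B_{R})$ (which exists because the effective dimension $s>p$) with a H\"older interpolation between $L^{p}(B_{R},w_{1})$ and $L^{ps/(s-p)}$, choosing the interpolation parameter $\lambda=1/\chi$ so that the resulting power of $|\nabla u|$ is exactly $p$.

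Integrating the displayed inequality in $t$ and replacing $\int_{B_{R}} |u|^{p}w_{1}\,dx$ by its supremum gives
\[
\int_{t_{1}}^{t_{2}}\!\!\int_{B_{R}} |u|^{p\chi} w_{2}\, dx\, dt \leq C \Big(\sup_{t\in(t_{1},t_{2})}\int_{B_{R}} |u|^{p} w_{1}\, dx\Big)^{\chi-1} \int_{t_{1}}^{t_{2}}\!\!\int_{B_{R}} |\nabla u|^{p} w_{2}\, dx\, dt.
\]
Since $A^{\chi-1} B \leq \max(A,B)^{\chi} \leq (A+B)^{\chi}$ for $A,B\geq 0$ and $\chi\geq 1$, the right-hand side is bounded by $C \|u\|_{V^{p}_{0}(B_{R}\times(t_{1},t_{2}),w_{1},w_{2})}^{p\chi}$, and taking $p\chi$-th roots closes the proof.

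The main obstacle is the mixed-weight spatial inequality in the first display: the $L^{p\chi}$ norm is taken with respect to $w_{2}$, while the $L^{p}$ norm of $u$ uses the different weight $w_{1}$, so a direct H\"older estimate on a single weighted measure will not suffice. Bridging the two weights requires exploiting the specific anisotropic power-weight structure; one reasonable route is to first establish a single-weight Gagliardo--Nirenberg inequality in $w_{1}$ with the correct exponent $\chi$ (whose existence hinges only on $s>p$ and on the weighted Sobolev tools already invoked for Lemma~\ref{prop002}) and then exchange $w_{1}$ for $w_{2}$ in the $L^{p\chi}$ norm by a pointwise or integrated comparison on $B_{R}$ that relies on the explicit form $w_{i}=|x'|^{\theta_{2i-1}}|x|^{\theta_{2i}}$. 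Making this weight-swap quantitative, and keeping the resulting constant in control, is the genuinely delicate technical point of the argument.
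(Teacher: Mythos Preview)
Your overall architecture---prove the mixed-weight Gagliardo--Nirenberg inequality
\[
\int_{B_{R}} |u|^{p\chi}\, w_{2}\, dx \leq C\Big(\int_{B_{R}} |u|^{p}\, w_{1}\, dx\Big)^{\chi-1}\int_{B_{R}} |\nabla u|^{p}\, w_{2}\, dx
\]
pointwise in $t$, then integrate in time and conclude via $A^{\chi-1}B\le (A+B)^{\chi}$---is exactly what the paper does. The disagreement is only in how the spatial step is carried out.

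Your proposed route, namely to prove a single-weight Gagliardo--Nirenberg inequality with respect to $w_{1}$ and then ``swap'' $w_{1}$ for $w_{2}$ on the $L^{p\chi}$ side by a pointwise or integrated comparison, is the weak point. There is no pointwise relation between $w_{1}=|x'|^{\theta_{1}}|x|^{\theta_{2}}$ and $w_{2}=|x'|^{\theta_{3}}|x|^{\theta_{4}}$ under the sole hypothesis $\theta_{1}+\theta_{2}>p-n$ of the proposition (the ordering conditions in $\mathrm{(}\mathbf{K2}\mathrm{)}$ are not assumed here), so a weight-swap argument would either need extra hypotheses or would introduce an uncontrolled factor. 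You correctly flag this as the delicate point, but you do not actually resolve it.

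The paper avoids any swap entirely. It splits the integrand multiplicatively,
\[
|u|^{p\chi}w_{2}=\Big(|u|^{p}|x'|^{\theta_{3}-\theta_{1}(\chi-1)}|x|^{\theta_{4}-\theta_{2}(\chi-1)}\Big)\cdot\Big(|u|^{p(\chi-1)}|x'|^{\theta_{1}(\chi-1)}|x|^{\theta_{2}(\chi-1)}\Big),
\]
and applies H\"older with exponents $\frac{1}{2-\chi}$ and $\frac{1}{\chi-1}$. The second factor produces $\big(\int|u|^{p}w_{1}\big)^{\chi-1}$ directly, with $w_{1}$ appearing for free. The first factor has precisely the weight exponents for which the anisotropic Caffarelli--Kohn--Nirenberg inequality of Li--Yan gives the bound $C\int|\nabla u|^{p}w_{2}$. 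Thus both weights emerge automatically from the H\"older split and CKN, with no comparison between $w_{1}$ and $w_{2}$ ever required. This is the idea missing from your sketch.
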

\begin{proof}
Observe from the anisotropic Caffarelli-Kohn-Nirenberg inequality in \cite{LY2023} that for any $u\in W_{0}^{1,p}(B_{R},w_{2})$,
\begin{align*}
&\left(\int_{B_{R}}|u|^{\frac{p(n+\theta_{1}+\theta_{2})}{n+\theta_{1}+\theta_{2}-p}}|x'|^{\frac{\theta_{3}(n+\theta_{1}+\theta_{2})-p\theta_{1}}{n+\theta_{1}+\theta_{2}-p}}|x|^{\frac{\theta_{4}(n+\theta_{1}+\theta_{2})-p\theta_{2}}{n+\theta_{1}+\theta_{2}-p}}dx\right)^{\frac{n+\theta_{1}+\theta_{2}-p}{n+\theta_{1}+\theta_{2}}}\notag\\
&\leq C\int_{B_{R}}|\nabla u|^{p}|x'|^{\theta_{3}}|x|^{\theta_{4}}dx,
\end{align*}
which, together with H\"{o}lder's inequality, reads that
\begin{align}\label{ZW001}
&\int_{B_{R}}|u|^{p\chi}|x'|^{\theta_{3}}|x|^{\theta_{4}}dx\notag\\
&=\int_{B_{R}}|u|^{p}|x'|^{\theta_{3}-\theta_{1}(\chi-1)}|x|^{\theta_{4}-\theta_{2}(\chi-1)}|u|^{p(\chi-1)}|x'|^{\theta_{1}(\chi-1)}|x|^{\theta_{2}(\chi-1)}dx\notag\\
&\leq\left(\int_{B_{R}}|u|^{\frac{p}{2-\chi}}|x'|^{\frac{\theta_{3}-\theta_{1}(\chi-1)}{2-\chi}}|x|^{\frac{\theta_{4}-\theta_{2}(\chi-1)}{2-\chi}}dx\right)^{2-\chi}\left(\int_{B_{R}}|u|^{p}|x'|^{\theta_{1}}|x|^{\theta_{2}}dx\right)^{\chi-1}\notag\\
&\leq C\int_{B_{R}}|\nabla u|^{p}|x'|^{\theta_{3}}|x|^{\theta_{4}}dx\left(\int_{B_{R}}|u|^{p}|x'|^{\theta_{1}}|x|^{\theta_{2}}dx\right)^{\chi-1}.
\end{align}
Integrating \eqref{ZW001} from $t_{1}$ to $t_{2}$ and using Young's inequality, we obtain
\begin{align*}
\left(\int^{t_{2}}_{t_{1}}\int_{B_{R}}|u|^{p\chi}w_{2}\right)^{\frac{1}{\chi}}\leq& C\bigg(\sup\limits_{t\in(t_{1},t_{2})}\int_{B_{R}}|u|^{p}w_{1}dx\bigg)^{\frac{\chi-1}{\chi}}\left(\int_{B_{R}}|\nabla u|^{p}w_{2}dx\right)^{\frac{1}{\chi}}\notag\\
\leq&C\bigg(\sup\limits_{t\in(t_{1},t_{2})}\int_{B_{R}}|u|^{p}w_{1}dx+\int^{t_{2}}_{t_{1}}\int_{B_{R}}|\nabla u|^{p}w_{2}dxdt\bigg).
\end{align*}
The proof is finished.

\end{proof}

\section{Local energy estimates}\label{SEC03}

For later use, we first fix some notations. For $x_{0}\in\mathbb{R}^{n}$, $t_{0}\in\mathbb{R}$ and $\rho,\tau>0$, let $B_{\rho}(x_{0})$ be the ball of centre $x_{0}$ and radius $\rho$. We use $[(x_{0},t_{0})+Q(\rho,\tau)]:=B_{\rho}(x_{0})\times(t_{0}-\tau,t_{0})$ to denote the backward cylinder of radius $\rho$ and height $\tau$ with vertex at $(x_{0},t_{0})$. For brevity, write $B_{\rho}:=B_{\rho}(0)$ and $Q(\rho,\tau):=[(0,0)+Q(\rho,\tau)]$. For $k\in\mathbb{R}$ and $u\in C((-1,0];L^{2}(B_{1},w_{1}))\cap L^{p}((-1,0);W^{1,p}(B_{1},w_{2}))$, define
\begin{align*}
(u-k)_{+}=\max\{u-k,0\},\quad(u-k)_{-}=\max\{k-u,0\}.
\end{align*}
For $E\subset B_{1}$ and $\widetilde{E}\subset B_{1}\times(-1,0]$, let
\begin{align}\label{MEA01}
|E|_{\mu_{w_{i}}}=\int_{E}w_{i}dx,\quad |\widetilde{E}|_{\nu_{w_{i}}}=\int_{\widetilde{E}}w_{i}dxdt,\quad i=1,2.
\end{align}
Local energy estimates for the truncated solution are now listed as follows.
\begin{lemma}\label{lem003}
Suppose that $u$ is the solution to problem \eqref{PO001} with $\Omega_{T}=B_{1}\times(-1,0]$. Let $v_{\pm}:=(u-k)_{\pm}$ with $k\in\mathbb{R}$. For any $[(x_{0},t_{0})+Q(\rho,\tau)]\subset B_{1}\times(-1,0]$ and $\xi\in C^{\infty}([(x_{0},t_{0})+Q(\rho,\tau)])$ which vanishes on $\partial B_{\rho}(x_{0})\times(t_{0}-\tau,t_{0})$ and satisfies that $0\leq\xi\leq1$, we derive
\begin{align*}
&\sup\limits_{s\in(t_{0}-\tau,t_{0})}\bigg\{\int_{B_{\rho}(x_{0})}v_{\pm}^{2}\xi^{p}(x,s)w_{1}dx+\frac{\lambda_{1}}{3}\int_{B_{\rho}(x_{0})\times(t_{0}-\tau,s)}|\nabla(v_{\pm}\xi)|^{p}w_{2}dxdt\bigg\}\notag\\
&\leq\int_{B_{\rho}(x_{0})}v_{\pm}^{2}\xi^{p}(x,t_{0}-\tau)w_{1}dx\notag\\
&\quad+C\int_{[(x_{0},t_{0})+Q(\rho,\tau)]}(v_{\pm}^{2}\xi^{p-1}|\partial_{t}\xi|w_{1}+v_{\pm}^{p}(|\nabla\xi|^{p}+|\xi|^{p})w_{2})dxdt\notag\\
&\quad+C\|\phi\|_{L^{l_{0}}(B_{1}\times(-1,0),w_{2})}|[(x_{0},t_{0})+Q(\rho,\tau)]\cap \{v_{\pm}>0\}|_{\nu_{w_{2}}}^{1-\frac{1}{l_{0}}},
\end{align*}
where $\phi=\phi_{1}+\phi_{2}^{\frac{p}{p-1}}+\phi_{3}^{\frac{p}{p-1}}$.
\end{lemma}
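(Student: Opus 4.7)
\textbf{Proof proposal for Lemma \ref{lem003}.}

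The plan is to carry out a standard Caccioppoli-type computation adapted to the weighted setting, using Steklov averages to justify the time-derivative manipulation. I focus on the $v_+$ case; the $v_-$ case is symmetric (take $-u$ and $-k$). For $0 < h \ll 1$ the Steklov average $u_h$ of $u$ satisfies the pointwise-in-time identity
\begin{equation*}
\int_{B_{\rho}(x_{0})} \partial_{t} u_{h}\, \varphi\, w_{1}\, dx + \int_{B_{\rho}(x_{0})} \bigl[w_{2} \mathbf{a}(x,t,u,\nabla u)\bigr]_{h} \cdot \nabla \varphi\, dx = \int_{B_{\rho}(x_{0})} \bigl[w_{2} b(x,t,u,\nabla u)\bigr]_{h} \varphi\, dx
\end{equation*}
for a.e. $t \in (-1+h, 0)$ and every $\varphi \in W_{0}^{1,p}(B_{\rho}(x_{0}), w_{2})$ (extended by zero). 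I would plug in $\varphi = (u_{h} - k)_{+} \xi^{p}$, integrate in time from $t_{0} - \tau$ to $s \in (t_{0}-\tau, t_{0})$, and then send $h \to 0$ using Lemma \ref{LEMA90} together with the growth bound in $(\mathbf{H2})$ to secure the $L^{p/(p-1)}$ convergence of $[w_{2} \mathbf{a}]_{h}$ (paired with the weak $L^{p}$ convergence of $\nabla((u_{h}-k)_{+} \xi^{p})$).

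\medskip

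The parabolic term is the textbook identity $\partial_{t} u_{h} \cdot (u_{h}-k)_{+} = \tfrac{1}{2} \partial_{t} (u_{h}-k)_{+}^{2}$, which after integration in time and passage to the limit produces
\begin{equation*}
\tfrac{1}{2} \int_{B_{\rho}(x_{0})} v_{+}^{2} \xi^{p} w_{1}\, dx \bigg|_{t_{0}-\tau}^{s} \;-\; \tfrac{p}{2} \int_{t_{0}-\tau}^{s} \!\!\int_{B_{\rho}(x_{0})} v_{+}^{2} \xi^{p-1} \partial_{t}\xi\, w_{1}\, dx dt,
\end{equation*}
producing the two $w_{1}$-terms on the left- and right-hand sides of the desired estimate. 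The elliptic term splits as $\int_{\{u>k\}} \mathbf{a} \cdot \nabla u\, \xi^{p} w_{2} + p \int_{\{u>k\}} v_{+} \xi^{p-1} \mathbf{a} \cdot \nabla\xi\, w_{2}$. Condition $(\mathbf{H1})$ lower-bounds the first by $\lambda_{1} \int |\nabla v_{+}|^{p} \xi^{p} w_{2} - \int \phi_{1} \xi^{p} \chi_{\{v_{+}>0\}} w_{2}$; condition $(\mathbf{H2})$ controls the cross term by $p \int (\lambda_{2} |\nabla v_{+}|^{p-1} + \phi_{2}) v_{+} \xi^{p-1} |\nabla\xi|\, w_{2}$; condition $(\mathbf{H3})$ controls the source term by $\int (\lambda_{3} |\nabla v_{+}|^{p-1} + \phi_{3}) v_{+} \xi^{p}\, w_{2}$.

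\medskip

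Each cross product involving $|\nabla v_{+}|^{p-1}$ is split by Young's inequality (using Lemma \ref{Lem865} with a small parameter $\varepsilon$) into a portion $\varepsilon |\nabla v_{+}|^{p} \xi^{p}$, which can be absorbed into $\lambda_{1} \int |\nabla v_{+}|^{p} \xi^{p} w_{2}$ by choosing $\varepsilon$ small enough, plus a remainder of the form $C v_{+}^{p}(|\nabla \xi|^{p} + \xi^{p}) w_{2}$. The cross products involving $\phi_{2}, \phi_{3}$ are handled by the same splitting, with the $\phi$-side producing $\phi_{2}^{p/(p-1)} \xi^{p}$ and $\phi_{3}^{p/(p-1)} \xi^{p}$. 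Collecting the surviving $\phi_{1}, \phi_{2}^{p/(p-1)}, \phi_{3}^{p/(p-1)}$ contributions into the integral $\int \phi\, \xi^{p}\, \chi_{\{v_{+}>0\}} w_{2}$, H\"older's inequality with exponents $l_{0}$ and $l_{0}/(l_{0}-1)$ (and $\xi \leq 1$) bounds it by $\|\phi\|_{L^{l_{0}}(B_{1}\times(-1,0),w_{2})}\,|[(x_{0},t_{0})+Q(\rho,\tau)]\cap\{v_{+}>0\}|_{\nu_{w_{2}}}^{1-1/l_{0}}$. After this absorption what remains on the left is still a definite fraction of $\lambda_{1} \int |\nabla v_{+}|^{p} \xi^{p} w_{2}$; converting it into $|\nabla(v_{+}\xi)|^{p}$ via Lemma \ref{Lem865} in the form $|\nabla(v_{+}\xi)|^{p} \leq (1+\varepsilon)|\nabla v_{+}|^{p}\xi^{p} + C_{\varepsilon} v_{+}^{p} |\nabla\xi|^{p}$ (with $\varepsilon$ tuned so the prefactor lands at $\lambda_{1}/3$) and taking the supremum over $s \in (t_{0}-\tau, t_{0})$ yields the claimed inequality.

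\medskip

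I expect the technical heart of the argument to be the Steklov-average limit: establishing that $[w_{2}\mathbf{a}]_{h} \cdot \nabla((u_{h}-k)_{+}\xi^{p}) \to w_{2}\mathbf{a} \cdot \nabla((u-k)_{+}\xi^{p})$ in $L^{1}(B_{\rho}(x_{0}) \times (t_{0}-\tau, s))$ requires combining Lemma \ref{LEMA90} with the $(\mathbf{H2})$-based $L^{p/(p-1)}$ bound on $\mathbf{a}$ and the weighted $L^{p}$ compactness of $\nabla u_{h}$; the rest is bookkeeping of Young's inequality with the weighted measures $w_{1}\, dx dt$ and $w_{2}\, dx dt$ respected consistently throughout.
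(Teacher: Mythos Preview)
Your proposal is correct and follows essentially the same approach as the paper's proof: test the Steklov-averaged equation with $\varphi=\pm(u_h-k)_\pm\xi^p$, pass to the limit via Lemma~\ref{LEMA90}, apply $(\mathbf{H1})$--$(\mathbf{H3})$ and Young's inequality to absorb the cross terms (the paper keeps $\frac{2\lambda_1}{3}\int|\xi\nabla v_\pm|^p w_2$ after absorption), use H\"older with exponent $l_0$ for the $\phi$-term, and finally invoke Lemma~\ref{Lem865} (the paper takes $\varepsilon=1$) to pass from $|\xi\nabla v_\pm|^p$ to $|\nabla(v_\pm\xi)|^p$ with coefficient $\lambda_1/3$.
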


\begin{proof}
Without loss of generality, let $(x_{0},t_{0})=(0,0)$. Set $\varphi=\pm(u_{h}-k)_{\pm}\xi^{p}$. Then picking the test function $\varphi_{\bar{h}}$, we obtain that for any $-\tau\leq s\leq 0$,
\begin{align}\label{WM01}
&\int_{B_{\rho}}u\varphi_{\bar{h}} w_{1}dx\Big|_{-\tau}^{s}+\int_{-\tau}^{s}\int_{B_{\rho}}(-u\partial_{t}\varphi_{\bar{h}} w_{1}+w_{2}\mathbf{a}(x,t,u,\nabla u)\cdot\nabla\varphi_{\bar{h}})dxdt\notag\\
&=\int_{-\tau}^{s}\int_{B_{\rho}}b(x,t,u,\nabla u)\varphi_{\bar{h}} w_{2}dxdt.
\end{align}
By a direct calculation, we deduce
\begin{align*}
&\int_{-\tau}^{s}\int_{B_{\rho}}(\partial_{t}u_{h}\varphi w_{1}+w_{2}[\mathbf{a}(x,t,u,\nabla u)]_{h}\cdot\nabla\varphi)dxdt\notag\\
&=\int_{-\tau}^{s}\int_{B_{\rho}}[b(x,t,u,\nabla u)]_{h}\varphi w_{2}dxdt.
\end{align*}
First, we have from integration by parts and Lemma \ref{LEMA90} that
\begin{align*}
&\int_{-\tau}^{s}\int_{B_{\rho}}\partial_{t}u_{h}\varphi w_{1}dxdt=\int_{-\tau}^{s}\int_{B_{\rho}}\partial_{t}[(u_{h}-k)_{\pm}]^{2}\xi^{p}w_{1}dxdt\notag\\
&=\frac{1}{2}\int_{B_{\rho}}((u_{h}-k)_{\pm}^{2}\xi^{p}(x,s)-(u_{h}-k)_{\pm}^{2}\xi^{p}(x,-\tau))w_{1}dx\notag\\
&\quad-\frac{p}{2}\int_{B_{\rho}\times(-\tau,s)}(u_{h}-k)_{\pm}^{2}\xi^{p-1}\partial_{t}\xi w_{1}dxdt\notag\\
&\rightarrow\frac{1}{2}\int_{B_{\rho}}((u-k)_{\pm}^{2}\xi^{p}(x,s)-(u-k)_{\pm}^{2}\xi^{p}(x,-\tau))w_{1}dx\notag\\
&\quad-\frac{p}{2}\int_{B_{\rho}\times(-\tau,s)}(u-k)_{\pm}^{2}\xi^{p-1}\partial_{t}\xi w_{1}dxdt,\quad\text{as }h\rightarrow0.
\end{align*}
With regard to the remaining parts in \eqref{WM01}, by first sending $h\rightarrow0$ and then making use of the structure conditions in $\mathrm{(}\mathbf{H1}\mathrm{)}$--$\mathrm{(}\mathbf{H3}\mathrm{)}$, we obtain from Young's inequality and Lemma \ref{LEMA90} that
\begin{align*}
&\int_{-\tau}^{s}\int_{B_{\rho}}w_{2}[\mathbf{a}(x,t,u,\nabla u)]_{h}\cdot\nabla\varphi dxdt\notag\\
&\rightarrow\int_{-\tau}^{s}\int_{B_{\rho}}\mathbf{a}(x,t,u,\nabla u)\cdot\big[\pm\nabla(u-k)_{\pm}\xi^{p}\pm p(u-k)_{\pm}\xi^{p-1}\nabla\xi\big]w_{2}dxdt\notag\\
&\geq\lambda_{1}\int_{-\tau}^{s}\int_{B_{\rho}}|\nabla(u-k)_{\pm}|^{p}\xi^{p}w_{2}dxdt-\int_{(B_{\rho}\times(-\tau,s))\cap\{(u-k)_{\pm}>0\}}\phi_{1}\xi^{p}w_{2}dxdt\notag\\
&\quad-\lambda_{2}p\int_{B_{\rho}\times(-\tau,s)}|\nabla(u-k)_{\pm}|^{p-1}(u-k)_{\pm}\xi^{p-1}|\nabla\xi|w_{2}dxdt\notag\\
&\quad-p\int_{B_{\rho}\times(-\tau,s)}(u-k)_{\pm}\phi_{2}\xi^{p-1}|\nabla\xi|w_{2}dxdt\notag\\
&\geq\frac{5\lambda_{1}}{6}\int_{-\tau}^{s}\int_{B_{\rho}}|\nabla(u-k)_{\pm}|^{p}\xi^{p}w_{2}dxdt-C\int_{B_{\rho}\times(-\tau,s)}(u-k)_{\pm}^{p}|\nabla\xi|^{p}w_{2}dxdt\notag\\
&\quad-C\int_{(B_{\rho}\times(-\tau,s))\cap\{(u-k)_{\pm}>0\}}\big(\phi_{1}+\phi_{2}^{\frac{p}{p-1}}\big)w_{2}dxdt,
\end{align*}
and
\begin{align*}
&\int_{-\tau}^{s}\int_{B_{\rho}}[b(x,t,u,\nabla u)]_{h}\varphi w_{2}\rightarrow\int_{-\tau}^{s}\int_{B_{\rho}}\pm b(x,t,u,\nabla u)(u-k)_{\pm}\xi^{p}w_{2}\notag\\
&\leq\lambda_{3}\int_{B_{\rho}\times(-\tau,s)}|\nabla(u-k)_{\pm}|^{p-1}(u-k)_{\pm}\xi^{p}w_{2}+\int_{B_{\rho}\times(-\tau,s)}(u-k)_{\pm}\phi_{3}\xi^{p}w_{2}\notag\\
&\leq\frac{\lambda_{1}}{6}\int_{B_{\rho}\times(-\tau,s)}|\nabla(u-k)_{\pm}|^{p}\xi^{p}w_{2}+C\int_{B_{\rho}\times(-\tau,s)}(u-k)_{\pm}^{p}\xi^{p}w_{2}\notag\\
&\quad+\int_{(B_{\rho}\times(-\tau,s))\cap\{(u-k)_{\pm}>0\}}\phi_{3}^{\frac{p}{p-1}}w_{2}.
\end{align*}
Combining these above facts, we deduce from H\"{o}lder's inequality that for any $-\tau\leq s\leq 0$,
\begin{align}\label{QE965}
&\int_{B_{\rho}}v_{\pm}^{2}\xi^{p}(x,s)w_{1}dx+\frac{2\lambda_{1}}{3}\int_{B_{\rho}\times(-\tau,s)}|\xi\nabla v_{\pm}|^{p}w_{2}dxdt\notag\\
&\leq\int_{B_{\rho}}v_{\pm}^{2}\xi^{p}(x,-\tau)w_{1}dx+C\int_{Q(\rho,\tau)}(v_{\pm}^{2}\xi^{p-1}|\partial_{t}\xi|w_{1}+v_{\pm}^{p}(|\nabla\xi|^{p}+|\xi|^{p})w_{2})dxdt\notag\\
&\quad+C\int_{Q(\rho,\tau)\cap\{(u-k)_{\pm}>0\}}\big(\phi_{1}+\phi_{2}^{\frac{p}{p-1}}+\phi_{3}^{\frac{p}{p-1}}\big)w_{2}dxdt\notag\\
&\leq\int_{B_{\rho}}v_{\pm}^{2}\xi^{p}(x,-\tau)w_{1}dx+C\int_{Q(\rho,\tau)}(v_{\pm}^{2}\xi^{p-1}|\partial_{t}\xi|w_{1}+v_{\pm}^{p}(|\nabla\xi|^{p}+|\xi|^{p})w_{2})dxdt\notag\\
&\quad+C\|\phi\|_{L^{l_{0}}(B_{1}\times(-1,0),w_{2})}|Q(\rho,\tau)\cap \{v_{\pm}>0\}|_{\nu_{w_{2}}}^{1-\frac{1}{l_{0}}},
\end{align}
where $v_{\pm}=(u-k)_{\pm}$ and $\phi=\phi_{1}+\phi_{2}^{\frac{p}{p-1}}+\phi_{3}^{\frac{p}{p-1}}.$ Then applying Lemma \ref{Lem865} with $\varepsilon=1$ to \eqref{QE965}, we complete the proof of Lemma \ref{lem003}.

\end{proof}

%
%

\section{Local regularity for weak solutions }\label{SEC05}


Denote
\begin{align*}
\bar{\varepsilon}_{0}:=(p-2)\varepsilon_{0},
\end{align*}
where $\varepsilon_{0}$ is given by \eqref{VAR01}. In light of $p>2$ and using the exponent conditions in \eqref{E01} and $\mathrm{(}\mathbf{K1}\mathrm{)}$--$\mathrm{(}\mathbf{K2}\mathrm{)}$, we obtain that for any $R\in(0,\frac{1}{2}]$ and $t_{0}\in[-\frac{1}{2},0]$, there holds
\begin{align*}
[(0,t_{0})+Q(2R,R^{p+\vartheta-\bar{\varepsilon}_{0}})]\subset B_{1}\times(-1,0].
\end{align*}
By a translation, in the following we assume without loss of generality that $(0,t_{0})=(0,0).$ Let
\begin{align*}
\mu^{+}=\sup\limits_{Q(2R,R^{p+\vartheta-\bar{\varepsilon}_{0}})}u,\quad\mu^{-}=\inf\limits_{Q(2R,R^{p+\vartheta-\bar{\varepsilon}_{0}})}u,
\end{align*}
and
\begin{align*}
\omega=\mathop{osc}\limits_{Q(2R,R^{p+\vartheta-\bar{\varepsilon}_{0}})}u=\mu^{+}-\mu^{-}.
\end{align*}
Construct the cylinder
\begin{align*}
Q(R,a_{0}R^{p+\vartheta}),\quad a_{0}=\left(\frac{\omega}{A}\right)^{2-p},
\end{align*}
where the constant $A$ will be determined later, which depends only upon the data. Observe that one of the following two relations must hold: either $\omega\leq AR^{\varepsilon_{0}}$, or $\omega>AR^{\varepsilon_{0}}$. In the case when $\omega>AR^{\varepsilon_{0}}$, we have
\begin{align*}
Q(R,a_{0}R^{p+\vartheta})\subset Q(2R,R^{p+\vartheta-\bar{\varepsilon}_{0}}),
\end{align*}
and thus,
\begin{align}\label{AQ821}
\mathop{osc}\limits_{Q(R,a_{0}R^{p+\vartheta})}u\leq\omega.
\end{align}
Remark that this fact is generally not verifiable for such a given box since its dimensions would have to be intrinsically matched with regard to the essential oscillation of the solution inside it. So we introduce the cylinder $Q(R,R^{p+\vartheta-\bar{\varepsilon}_{0}})$ to guarantee that \eqref{AQ821} holds under the condition of $\omega>AR^{\varepsilon_{0}}$. We further construct subcylinders of smaller size inside $Q(R,a_{0}R^{p+\vartheta})$ as follows:
\begin{align*}
Q(R,mR^{p+\vartheta}),\quad m=\left(\frac{\omega}{M}\right)^{2-p},
\end{align*}
where $0<M\leq A$. This implies that $Q(R,mR^{p+\vartheta})\subset Q(R,a_{0}R^{p+\vartheta}).$

The key to the proofs of Theorems \ref{ZWTHM90} and \ref{THM060} lies in achieving the following desired oscillation improvement of the solution $u$.
\begin{prop}\label{PRO01}
Assume as above. The constants $A,M$ (and then $a_{0},m$) can be determined and there exists two constants $\kappa_{\ast}>1$ and $1<c_{\ast}\leq A^{p-2}$, both depending only on the data such that we have either $\omega\leq AR^{\varepsilon_{0}}$, or

$(i)$ if
\begin{align}\label{A90}
|B_{R}\cap\{u(\cdot,-c_{\ast}\omega^{2-p}R^{p+\vartheta})>\mu^{+}-2^{-1}\omega\}|_{\mu_{w_{1}}}\leq2^{-1}|B_{R}|_{\mu_{w_{1}}},
\end{align}
then
\begin{align}\label{A01}
u(x,t)\leq\mu^{+}-2^{-\kappa_{\ast}}\omega,\quad\mathrm{for}\;(x,t)\in Q(R/2,m(R/2)^{p+\vartheta});
\end{align}

$(ii)$ if
\begin{align}\label{A91}
|B_{R}\cap\{u(\cdot,-c_{\ast}\omega^{2-p}R^{p+\vartheta})<\mu^{-}+2^{-1}\omega\}|_{\mu_{w_{1}}}\leq2^{-1}|B_{R}|_{\mu_{w_{1}}},
\end{align}
then
\begin{align}\label{A02}
u(x,t)\geq\mu^{-}+2^{-\kappa_{\ast}}\omega,\quad\mathrm{for}\;(x,t)\in Q(R/2,m(R/2)^{p+\vartheta}).
\end{align}

\end{prop}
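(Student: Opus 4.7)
The plan is to observe that Proposition~\ref{PRO01} asserts two symmetric statements, and that $(ii)$ follows from $(i)$ applied to $-u$, which solves a parabolic problem of the same structure (with the obvious sign changes in $\mathbf{a}$ and $b$). So I would only prove $(i)$. The strategy is the classical three-step parabolic De Giorgi scheme, carried out on the intrinsically scaled cylinder $Q(R,mR^{p+\vartheta})$ with $m=(\omega/M)^{2-p}$, and combining the three lemmas (\ref{LEM0035}, \ref{lem005}, \ref{LEM090}) alluded to in the introduction.

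First I would fix notation: set $k_j=\mu^{+}-2^{-j}\omega$ and work with the truncations $(u-k_j)_{+}$. The local energy estimates of Lemma~\ref{lem003}, applied on $Q(R,mR^{p+\vartheta})$ with suitable cut-off $\xi$, pick up three types of terms: an initial slice in $w_1\,dx$, a cylinder integral of $(u-k_j)_{+}^{p}$ in $w_2\,dxdt$, and a forcing contribution involving $\|\phi\|_{L^{l_0}(\cdot,w_{2})}|\{(u-k_j)_{+}>0\}|_{\nu_{w_2}}^{1-1/l_0}$. The intrinsic choice $m=(\omega/M)^{2-p}$ is made precisely so that the terms $\omega^{2}w_{1}|B_{R}|$ and $\omega^{p}m R^{p+\vartheta}|B_{R}|_{\mu_{w_2}}$ can be balanced after the switch of measures permitted by $(\mathbf{K2})$ (here $\theta_1\geq\theta_3$ and $\theta_1+\theta_2\geq\theta_3+\theta_4$ are crucial). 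The dichotomy $\omega\leq AR^{\varepsilon_0}$ versus $\omega>AR^{\varepsilon_0}$ enters at exactly this stage: on the alternative branch, the exponent condition \eqref{E01} together with $R^{p(1-1/l_0)(n+\theta_1+\theta_2)/p - \ldots}$ arithmetic shows that the forcing contribution is dominated by $\omega^{p}\cdot R^{p\varepsilon_{0}}$-type quantities, which can be absorbed.

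The three steps I would execute, in order, are the following. Step~1 (\textbf{expansion of time}, Lemma~\ref{LEM0035}): starting from the single-time measure estimate \eqref{A90} at $t=-c_{*}\omega^{2-p}R^{p+\vartheta}$, use the energy inequality applied to $(u-k_{1})_{+}$ with $\xi$ depending only on $t$ to propagate the smallness of $|\{u(\cdot,t)>\mu^{+}-2^{-j}\omega\}|_{\mu_{w_1}}$ uniformly for $t\in[-c_{*}\omega^{2-p}R^{p+\vartheta},0]$, at the cost of passing to a deeper truncation level $k_{j_{1}}$ with $j_{1}$ depending only on the data. Step~2 (\textbf{decay of the bad set}, Lemma~\ref{lem005}): iterate the energy inequality in the cylinder $Q(R,mR^{p+\vartheta})$ combined with the anisotropic weighted parabolic Sobolev inequality (Proposition~\ref{prop001}) and the isoperimetric inequality of Lemma~\ref{prop002}, to show that for every $\nu\in(0,1)$ there exists $j_{2}=j_{2}(\nu,\mathrm{data})$ with
\[
|Q(R,mR^{p+\vartheta})\cap\{u>\mu^{+}-2^{-j_{2}}\omega\}|_{\nu_{w_2}}\leq\nu\,|Q(R,mR^{p+\vartheta})|_{\nu_{w_2}}.
\]
Step~3 (\textbf{De Giorgi iteration / oscillation improvement}, Lemma~\ref{LEM090}): with $M$, hence $m$, first fixed so that the rescaled problem is well-posed, choose $\nu$ small enough (independently of $R$) so that the standard De Giorgi iteration, based on Lemma~\ref{lem003}, Proposition~\ref{prop001}, and a geometric sequence of shrinking sub-cylinders terminating at $Q(R/2,m(R/2)^{p+\vartheta})$, forces $(u-k_{j_{2}+1})_{+}\equiv 0$ there. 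This yields \eqref{A01} with $\kappa_{*}:=j_{2}+1$, and fixes $A$ a posteriori by requiring $c_{*}\leq A^{p-2}$ to be compatible with the inclusion $Q(R,c_{*}\omega^{2-p}R^{p+\vartheta})\subset Q(R,a_{0}R^{p+\vartheta})$.

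The main obstacle I anticipate is the consistent propagation of the measure condition through Step~1 and Step~2 under the \emph{anisotropic} weights $w_{1}\neq w_{2}$: most classical arguments use a single measure, whereas here the energy inequality produces a pointwise-in-time slice integrated against $w_{1}$, while the Sobolev-type gain is with respect to $w_{2}$. This is exactly what $(\mathbf{K2})$ is designed to handle, and the technical heart of the proof is turning the inequalities $\theta_{1}\geq\theta_{3}$, $\theta_{1}+\theta_{2}\geq\theta_{3}+\theta_{4}$, $\theta_{3}+\min\{0,\theta_{4}\}>1-n$ into quantitative inclusions $|E|_{\mu_{w_1}}\lesssim R^{\vartheta}|E|_{\mu_{w_2}}$ (and similarly for $\nu$-measures) on balls and sub-cylinders around the origin, so that the $a_{0}$-intrinsic scaling $R^{p+\vartheta}$ exactly compensates the mismatch. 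Once this switch of measures is formalized, Steps~1--3 proceed in the standard DiBenedetto/De~Giorgi template, with the forcing $\phi$ controlled by the slack $R^{p\varepsilon_{0}}$ provided by the alternative $\omega>AR^{\varepsilon_{0}}$.
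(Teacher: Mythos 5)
Your overall skeleton (energy estimates, expansion of time, decay of the bad set, De Giorgi iteration with the weighted Sobolev and isoperimetric inequalities, and the $w_{1}\leftrightarrow w_{2}$ measure switch under $(\mathbf{K2})$, which the paper formalizes in Lemma \ref{lem09}) matches the paper's template, and reducing $(ii)$ to $(i)$ is consistent with what the paper does. However, there is a genuine gap at the heart of your Step~1, and it propagates through the whole plan. Lemma \ref{LEM0035} does \emph{not} propagate the single-time measure information over the full interval $[-c_{\ast}\omega^{2-p}R^{p+\vartheta},0]$: it only yields the $\tfrac34$-bound on a \emph{short} window of length $\delta_{0}\omega^{2-p}R^{p+\vartheta}$ with $\delta_{0}\in(0,1)$ small and universal (and its proof requires a spatial cutoff vanishing on the lateral boundary, so ``$\xi$ depending only on $t$'' is not admissible in Lemma \ref{lem003}). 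The crux of the degenerate case $p>2$ is precisely how to bridge from this short window to the long cylinder on which the final iteration must run, and your linear scheme cannot do it with a level $k_{j_{1}}$ ``depending only on the data'': to cover a window of length $c_{\ast}\omega^{2-p}R^{p+\vartheta}$ by energy (or logarithmic, cf.\ Proposition \ref{LEM30}) estimates one needs the truncation depth $j_{1}$ to grow with $c_{\ast}$, while $c_{\ast}$ itself must be taken comparable to the intrinsic height $2^{j_{2}(p-2)}\omega^{2-p}R^{p+\vartheta}$ of the cylinder adapted to the final working level $2^{-j_{2}}\omega$ with $j_{2}\geq j_{1}$. This gives $j_{1}\gtrsim j_{2}\geq j_{1}+N$, a circular dependence of the constants that your proposal never resolves.

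The paper breaks this circle by the exponential variable substitution \eqref{EXP01} (the DGV mechanism announced in the introduction): Lemma \ref{LEM0035} is applied not once but at every level $\sigma\omega$, $\sigma\in(0,1]$, and since the admissible window $\delta_{0}(\sigma\omega)^{2-p}R^{p+\vartheta}$ lengthens at exactly the rate at which the level degrades, the rescaled function $\tilde u$ satisfies the time-slice measure bound \eqref{Q05} at the \emph{fixed} level $\kappa_{0}$ on the \emph{infinite} interval $\tau\in[0,\infty)$. Lemmas \ref{lem005} and \ref{LEM090} (your Steps~2 and~3) are then formulated and proved for $\tilde u$ on forward cylinders in the new time variable, and only at the very end are $\bar t=-c_{\ast}\omega^{2-p}R^{p+\vartheta}$, $M$ and $\kappa_{\ast}$ read off by undoing the substitution, see \eqref{ME09} and \eqref{M900}; this is also where the alternative constant $A$ (hence the $\omega\leq AR^{\varepsilon_{0}}$ branch, through $\mathcal{N}_{\ast}$, $A_{0}$) inherits its dependence on $j_{\ast}$. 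Without this ingredient (or the considerably more intricate classical subcylinder dichotomy of DiBenedetto, which is not what you describe), your Steps~2 and~3 cannot be started, because the measure-theoretic input they need on the long intrinsic cylinder is simply not available from Lemma \ref{LEM0035} as you invoke it.
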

\begin{remark}
Since $\frac{p+\vartheta}{p-2}>\varepsilon_{0}$ and $A\geq c_{\ast}^{\frac{1}{p-2}}$, then we have
\begin{align*}
c_{\ast}\omega^{2-p}R^{p+\vartheta}<1,\quad \text{if }\omega>AR^{\varepsilon_{0}}>c_{\ast}^{\frac{1}{p-2}}R^{\frac{p+\vartheta}{p-2}},
\end{align*}
which indicates that the assumed conditions in \eqref{A90} and \eqref{A91} are valid.
\end{remark}

\subsection{The proof of Proposition \ref{PRO01}}
In the following we take the proof of \eqref{A01} for example. The proof of \eqref{A02} is similar and thus omitted. We now begin with recalling a measure lemma as follows.
\begin{lemma}[see Lemma 2.1 in \cite{MZ2023}]\label{LEM860}
$d\mu:=|x'|^{\theta_{1}}|x|^{\theta_{2}}dx$ is a Radon measure if $(\theta_{1},\theta_{2})\in\mathcal{A}\cup\mathcal{B}$. Moreover, $C^{-1}R^{n+\theta_{1}+\theta_{2}}\leq\mu(B_{R})\leq C R^{n+\theta_{1}+\theta_{2}}$ for any $R>0$ and some constant $C=C(n,\theta_{1},\theta_{2})>0$.

\end{lemma}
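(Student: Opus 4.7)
The plan is to reduce the lemma to a single computation on the unit ball by exploiting the homogeneity of the weight, and then to verify finiteness and positivity by separating the radial and angular variables in spherical coordinates.

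First, since $|x'|^{\theta_{1}}|x|^{\theta_{2}}$ is locally integrable on $\mathbb{R}^{n}\setminus\{0\}$, it defines a Borel measure $d\mu$, and to identify it as a Radon measure it suffices to verify that $\mu(K)<\infty$ on every compact set $K$, which by monotonicity reduces to checking $\mu(B_{R})<\infty$ for each $R>0$. The homogeneity of the integrand under the substitution $x=Ry$ gives
\[
\mu(B_{R})=\int_{B_{R}}|x'|^{\theta_{1}}|x|^{\theta_{2}}\,dx=R^{n+\theta_{1}+\theta_{2}}\int_{B_{1}}|y'|^{\theta_{1}}|y|^{\theta_{2}}\,dy=R^{n+\theta_{1}+\theta_{2}}\mu(B_{1}),
\]
so both the sharp two-sided bound $C^{-1}R^{n+\theta_{1}+\theta_{2}}\le\mu(B_{R})\le CR^{n+\theta_{1}+\theta_{2}}$ and the Radon property reduce to showing $0<\mu(B_{1})<\infty$.

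Next, I would parametrize $B_{1}$ by spherical coordinates $x=\rho\xi$ with $\rho=|x|\in(0,1]$ and $\xi=(\xi',\xi_{n})\in S^{n-1}$. Since $|x'|=\rho|\xi'|$, $|x|=\rho$, and $dx=\rho^{n-1}\,d\rho\,d\sigma(\xi)$, the integral factors as
\[
\mu(B_{1})=\left(\int_{0}^{1}\rho^{n+\theta_{1}+\theta_{2}-1}\,d\rho\right)\left(\int_{S^{n-1}}|\xi'|^{\theta_{1}}\,d\sigma(\xi)\right).
\]
The radial factor is strictly positive and finite iff $n+\theta_{1}+\theta_{2}>0$. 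For the angular factor, use the standard longitudinal parametrization of $S^{n-1}$ around the $x_{n}$-axis by the polar angle $\phi\in[0,\pi]$: then $|\xi'|=\sin\phi$ and $d\sigma=\sin^{n-2}\phi\,d\phi\,d\sigma'$ with $d\sigma'$ on $S^{n-2}$, so the angular integral equals $|S^{n-2}|\int_{0}^{\pi}\sin^{n-2+\theta_{1}}\phi\,d\phi$, which is strictly positive and finite iff $\theta_{1}>-(n-1)$.

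Finally, I would verify that both integrability conditions are enforced on $\mathcal{A}\cup\mathcal{B}$. On $\mathcal{A}$ one has $\theta_{1}>-(n-1)$ and $\theta_{2}\geq 0$, which forces $n+\theta_{1}+\theta_{2}>1>0$; on $\mathcal{B}$ both $\theta_{1}>-(n-1)$ and $n+\theta_{1}+\theta_{2}>0$ are imposed directly. Thus $\mu(B_{1})\in(0,\infty)$ in either case, and combined with the scaling identity this gives the Radon property and the two-sided scaling bound with $C=C(n,\theta_{1},\theta_{2})$ depending only on these parameters. No step here is particularly delicate; the only bookkeeping subtlety is recognizing that the radial and angular integrability thresholds must be enforced simultaneously, which is precisely what the definition of $\mathcal{A}\cup\mathcal{B}$ is designed to encode.
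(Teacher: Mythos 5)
Your argument is correct. Note that the paper does not prove this lemma at all: it is imported verbatim by citation to Lemma 2.1 of \cite{MZ2023}, so there is no internal proof to compare against; your self-contained verification (exact scaling $\mu(B_R)=R^{n+\theta_1+\theta_2}\mu(B_1)$, then factorization of $\mu(B_1)$ in spherical coordinates into a radial integral finite iff $n+\theta_1+\theta_2>0$ and an angular integral $|S^{n-2}|\int_0^\pi\sin^{n-2+\theta_1}\phi\,d\phi$ finite iff $\theta_1>-(n-1)$, both thresholds being exactly what $\mathcal{A}\cup\mathcal{B}$ encodes) is the standard computation and gives the two-sided bound with equality, $\mu(B_R)=\mu(B_1)R^{n+\theta_1+\theta_2}$. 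One small remark: your opening clause ``since $|x'|^{\theta_1}|x|^{\theta_2}$ is locally integrable on $\mathbb{R}^n\setminus\{0\}$'' is doing no logical work and, taken literally, already presupposes part of what you later prove, since for $\theta_1<0$ the weight is singular along the whole $x_n$-axis, not only at the origin, and its local integrability there is precisely the condition $\theta_1>-(n-1)$ established by your angular integral. Since $E\mapsto\int_E w\,dx$ is a Borel measure for any nonnegative measurable $w$, and your computation of $\mu(B_R)<\infty$ supplies local finiteness (hence the Radon property), you could simply delete that clause; as written it is a redundancy, not a gap.
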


Making use of Lemma \ref{LEM860}, we prove the following switch lemma from the measures $w_{1}dxdt$ to $w_{2}dxdt.$
\begin{lemma}\label{lem09}
Assume that $(\theta_{1},\theta_{2}),(\theta_{3},\theta_{4})\in\mathcal{A}\cup\mathcal{B}$ and $\theta_{3}+\min\{0,\theta_{4}\}>1-n$. There exists a positive constant $C_{0}=C_{0}(n,p,\theta_{1},\theta_{2},\theta_{3},\theta_{4})$ such that for any $\varepsilon,\rho\in(0,1/2]$ and $\widetilde{E}\subset Q(\rho,\rho^{p+\vartheta})$, if
\begin{align}\label{COND01}
\frac{|\widetilde{E}|_{\nu_{w_{1}}}}{|Q(\rho,\rho^{p+\vartheta})|_{\nu_{w_{1}}}}\leq\varepsilon^{\beta},
\end{align}
then
\begin{align*}
\frac{|\widetilde{E}|_{\nu_{w_{2}}}}{|Q(\rho,\rho^{p+\vartheta})|_{\nu_{w_{2}}}}\leq C_{0}\varepsilon^{n-1+\theta_{3}+\min\{0,\theta_{4}\}},
\end{align*}
where $\beta=n-1+\theta_{3}+\min\{0,\theta_{4}\}+\max\{0,\theta_{1}-\theta_{3}\}+\max\{0,\theta_{2}-\theta_{4}\}$, the measures $\nu_{w_{i}}$, $i=1,2$ are defined by \eqref{MEA01}.

\end{lemma}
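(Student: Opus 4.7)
The plan is to decompose $B_\rho$ into a near-singular-locus piece $N := B_\rho \cap \{|x'|<\varepsilon\rho\}$ and its complement $F := B_\rho \setminus N$, and to estimate $|\widetilde{E}|_{\nu_{w_2}}$ separately on each slab. Because $|x|\geq|x'|$, the set $\{|x|<\varepsilon\rho\}$ is already contained in $N$, so this single cylindrical cutoff isolates all the singular behavior of both $w_1$ and $w_2$ at once. On the near piece I will bound $|\widetilde{E}\cap(N\times(-\rho^{p+\vartheta},0))|_{\nu_{w_2}}$ trivially by $|N|_{\mu_{w_2}}\cdot\rho^{p+\vartheta}$, while on the far piece I will exploit a pointwise bound on the ratio $w_2/w_1$ together with the hypothesis \eqref{COND01}.

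For the near-piece computation, I will pass to cylindrical coordinates $(r,\omega',x_n)$ with $r=|x'|$, reducing $|N|_{\mu_{w_2}}$ to a one-dimensional integral involving $r^{n-2+\theta_3}(r^2+x_n^2)^{\theta_4/2}$ on $(0,\varepsilon\rho)\times(-\rho,\rho)$. When $\theta_4\geq0$ I will bound $(r^2+x_n^2)^{\theta_4/2}\leq\rho^{\theta_4}$ crudely; when $\theta_4<0$ I will use $|x|\geq|x'|$ to bound it by $r^{\theta_4}$. The hypothesis $\theta_3+\min\{0,\theta_4\}>1-n$ is exactly what keeps the resulting radial integral convergent, and it produces $|N|_{\mu_{w_2}}\leq C\varepsilon^{n-1+\theta_3+\min\{0,\theta_4\}}\rho^{n+\theta_3+\theta_4}$. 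Dividing by $|Q(\rho,\rho^{p+\vartheta})|_{\nu_{w_2}}\asymp\rho^{n+\theta_3+\theta_4+p+\vartheta}$ (via Lemma \ref{LEM860}) delivers the target $\varepsilon$-scaling on the near part.

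For the far-piece computation, $|x|\geq|x'|\geq\varepsilon\rho$ on $F$, so a case split on the signs of $\theta_3-\theta_1$ and $\theta_4-\theta_2$ yields
\[
\frac{w_2}{w_1}=|x'|^{\theta_3-\theta_1}|x|^{\theta_4-\theta_2}\leq C\rho^{-\vartheta}\varepsilon^{-(\max\{0,\theta_1-\theta_3\}+\max\{0,\theta_2-\theta_4\})} \quad \text{on } F.
\]
I will then integrate against $d\nu_{w_1}$ over $\widetilde{E}\cap(F\times(-\rho^{p+\vartheta},0))$, apply the hypothesis \eqref{COND01}, and use the elementary identity $|Q(\rho,\rho^{p+\vartheta})|_{\nu_{w_1}}=\rho^{\vartheta}\,|Q(\rho,\rho^{p+\vartheta})|_{\nu_{w_2}}$ (again via Lemma \ref{LEM860}) to cancel the $\rho^{\pm\vartheta}$ factors. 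The chosen $\beta$ is designed precisely so that $\varepsilon^\beta$ absorbs the $\varepsilon^{-(\max\{0,\theta_1-\theta_3\}+\max\{0,\theta_2-\theta_4\})}$ weight-ratio loss and leaves over the desired target exponent. Adding the near and far contributions yields the claim. The main technical hurdle is to keep the sign cases for $\theta_1-\theta_3$, $\theta_2-\theta_4$, and $\theta_4$ itself cleanly organized, and to verify that the decomposition $\beta=(n-1+\theta_3+\min\{0,\theta_4\})+\max\{0,\theta_1-\theta_3\}+\max\{0,\theta_2-\theta_4\}$ is optimal for this strategy — the first summand is the target exponent and the remainder is precisely the loss from the weight-ratio comparison on $F$.
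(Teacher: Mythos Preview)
Your proposal is correct and follows essentially the same route as the paper: split $\widetilde{E}$ at $\{|x'|<\varepsilon\rho\}$, bound the near piece by a direct volume estimate with a case split on the sign of $\theta_4$ (using $|x|\geq|x'|$ when $\theta_4<0$), and on the far piece bound the ratio $w_2/w_1$ pointwise via $\varepsilon\rho\leq|x'|\leq|x|\leq\rho$ with the four-way case split on the signs of $\theta_1-\theta_3$ and $\theta_2-\theta_4$, then invoke \eqref{COND01} and Lemma~\ref{LEM860}. One small remark: your ``identity'' $|Q(\rho,\rho^{p+\vartheta})|_{\nu_{w_1}}=\rho^{\vartheta}|Q(\rho,\rho^{p+\vartheta})|_{\nu_{w_2}}$ holds only up to a constant factor (both sides are $\asymp\rho^{n+p+\theta_1+\theta_2}$ by Lemma~\ref{LEM860}), but that is all you need.
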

\begin{proof}
Observe from Lemma \ref{LEM860} that
\begin{align*}
\frac{|\widetilde{E}|_{\nu_{w_{2}}}}{|Q(\rho,\rho^{p+\vartheta})|_{\nu_{w_{2}}}}\leq&\frac{C}{\rho^{n+p+\theta_{1}+\theta_{2}}}\bigg(\int_{\widetilde{E}\cap\{|x'|<\varepsilon \rho\}}w_{2}dxdt+\int_{\widetilde{E}\cap\{|x'|\geq\varepsilon \rho\}}w_{2}dxdt\bigg).
\end{align*}
For brevity, write
\begin{align*}
I_{1}:=\int_{\widetilde{E}\cap\{|x'|<\varepsilon \rho\}}w_{2}dxdt,\quad I_{2}:=\int_{\widetilde{E}\cap\{|x'|\geq\varepsilon \rho\}}w_{2}dxdt.
\end{align*}
For the first term $I_{1}$, we discuss as follows:

$(i)$ if $\theta_{4}\geq0$, then
\begin{align*}
I_{1}\leq C\rho^{\theta_{4}}\int_{\widetilde{E}\cap\{|x'|<\varepsilon \rho\}}|x'|^{\theta_{3}}dxdt\leq C\varepsilon^{n-1+\theta_{3}}\rho^{n+p+\theta_{1}+\theta_{2}};
\end{align*}

$(ii)$ if $\theta_{4}<0$, then
\begin{align*}
I_{1}\leq\int_{\widetilde{E}\cap\{|x'|<\varepsilon \rho\}}|x'|^{\theta_{3}+\theta_{4}}dxdt\leq C\varepsilon^{n-1+\theta_{3}+\theta_{4}}\rho^{n+p+\theta_{1}+\theta_{2}}.
\end{align*}

As for the second term $I_{2}$, we deduce from \eqref{COND01} that

$(i)$ if $\theta_{1}\geq\theta_{3}$, then
\begin{align*}
I_{2}\leq(\varepsilon \rho)^{\theta_{3}-\theta_{1}}\int_{\widetilde{E}\cap\{|x'|\geq\varepsilon \rho\}}|x'|^{\theta_{1}}|x|^{\theta_{4}}dxdt,
\end{align*}
which implies that for $\theta_{2}\geq\theta_{4}$,
\begin{align*}
I_{2}\leq(\varepsilon \rho)^{-\vartheta}|\widetilde{E}|_{\nu_{w_{1}}}\leq C\varepsilon^{\beta-\vartheta}\rho^{n+p+\theta_{1}+\theta_{2}},
\end{align*}
and, for $\theta_{2}<\theta_{4}$,
\begin{align*}
I_{2}\leq\varepsilon^{\theta_{3}-\theta_{1}}\rho^{-\vartheta}|\widetilde{E}|_{\nu_{w_{1}}}\leq C\varepsilon^{\beta-\theta_{1}+\theta_{3}}\rho^{n+p+\theta_{1}+\theta_{2}};
\end{align*}

$(ii)$ if $\theta_{1}<\theta_{3}$, then
\begin{align*}
I_{2}\leq\rho^{\theta_{3}-\theta_{1}}\int_{\widetilde{E}\cap\{|x'|\geq\varepsilon \rho\}}|x'|^{\theta_{1}}|x|^{\theta_{4}}dxdt,
\end{align*}
which leads to that for $\theta_{2}\geq\theta_{4}$,
\begin{align*}
I_{2}\leq\varepsilon^{\theta_{4}-\theta_{2}}\rho^{-\vartheta}|\widetilde{E}|_{\nu_{w_{1}}}\leq C\varepsilon^{\beta-\theta_{2}+\theta_{4}}\rho^{n+p+\theta_{1}+\theta_{2}},
\end{align*}
and, for $\theta_{2}<\theta_{4}$,
\begin{align*}
I_{2}\leq\rho^{-\vartheta}|\widetilde{E}|_{\nu_{w_{1}}}\leq C\varepsilon^{\beta}\rho^{n+p+\theta_{1}+\theta_{2}}.
\end{align*}
Therefore, combining these above facts, we have
\begin{align*}
\frac{|\widetilde{E}|_{\nu_{w_{2}}}}{|Q(\rho,\rho^{p+\vartheta})|_{\nu_{w_{2}}}}\leq&C\varepsilon^{n-1+\theta_{3}+\min\{0,\theta_{4}\}}.
\end{align*}

\end{proof}
In order to complete the proof of Proposition \ref{PRO01}, we need to establish the following three main lemmas required in the De Giorgi truncation method of parabolic version. The first lemma is to carry out the expansion of time by utilizing the energy estimates.
\begin{lemma}\label{LEM0035}
Assume as before. Then there exists a constant $\delta_{0}\in(0,1)$ depending only on the data and independent of $\omega$ such that we obtain either $\omega\leq R^{\varepsilon_{0}}$, or if for some $\bar{t}\in[-1,-\delta_{0}\omega^{2-p}R^{p+\vartheta}]$,
\begin{align}\label{Q01}
|B_{R}\cap\{u(\cdot,\bar{t})>\mu^{+}-2^{-1}\omega\}|_{\mu_{w_{1}}}\leq2^{-1}|B_{R}|_{\mu_{w_{1}}},
\end{align}
then
\begin{align*}
|B_{R}\cap\{u(\cdot,t)>\mu^{+}-2^{-c_{0}}\omega\}|_{\mu_{w_{1}}}\leq\frac{3}{4}|B_{R}|_{\mu_{w_{1}}},\quad\forall\,t\in[\bar{t},\bar{t}+\delta_{0}\omega^{2-p}R^{p+\vartheta}],
\end{align*}
where $c_{0}$ is given by
\begin{align}\label{CON09}
c_{0}:=-\frac{\ln\frac{\sqrt{5}-2}{2\sqrt{5}}}{\ln2}.
\end{align}

\end{lemma}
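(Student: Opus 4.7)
My plan is to invoke Lemma \ref{lem003} in its ``$+$'' form exactly once, on the cylinder $B_R \times [\bar{t}, t]$ for arbitrary $t \in [\bar{t}, \bar{t} + \delta_0 \omega^{2-p} R^{p+\vartheta}]$, with the single level $k = k_1 := \mu^+ - \omega/2$ (matched to the hypothesis \eqref{Q01}) and a time-independent cutoff $\xi = \xi(x) \in C^\infty(B_R)$ satisfying $\xi \equiv 1$ on $B_{(1-\sigma)R}$, $\xi \equiv 0$ on $\partial B_R$, and $|\nabla \xi| \leq C(\sigma R)^{-1}$, for some small $\sigma \in (0,1)$ to be fixed. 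The goal is then to convert the resulting $L^2$--bound on $(u-k_1)_+$ at time $t$ into a measure bound for $\{u(\cdot,t) > k_{c_0}\}$ with $k_{c_0} := \mu^+ - 2^{-c_0}\omega$, via the level gap $k_{c_0} - k_1 = (\omega/2)(1 - 2^{1-c_0})$.

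I would control each of the three terms on the right side of the energy inequality as follows. The initial term is bounded by $(\omega/2)^2 \cdot \tfrac12 |B_R|_{\mu_{w_1}}$, since $(u-k_1)_+ \leq \omega/2$ everywhere and its support at $t = \bar{t}$ lies inside $\{u(\cdot,\bar{t}) > k_1\}$, which by \eqref{Q01} has $\mu_{w_1}$--measure at most $|B_R|_{\mu_{w_1}}/2$. For the gradient term, Lemma \ref{LEM860} and the dimensional identity $n + \theta_3 + \theta_4 + p + \vartheta = n + p + \theta_1 + \theta_2$ give $|Q(R, \delta_0 \omega^{2-p} R^{p+\vartheta})|_{\nu_{w_2}} \leq C\delta_0 \omega^{2-p} R^{n+p+\theta_1+\theta_2}$; together with $(u-k_1)_+^p \leq (\omega/2)^p$ and $|\nabla\xi|^p \leq C(\sigma R)^{-p}$, the gradient term is bounded by $C \sigma^{-p} \delta_0 (\omega/2)^2 R^{n+\theta_1+\theta_2}$. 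The $\phi$--term requires the precise arithmetic behind $\varepsilon_0$: the alternative hypothesis $\omega > R^{\varepsilon_0}$ combined with \eqref{VAR01} yields $\omega^{-(p(l_0-1)+2)/l_0} R^{[p(l_0-1)-(n+\theta_1+\theta_2)]/l_0} \leq 1$ (with the two exponents cancelling exactly by the definition of $\varepsilon_0$), giving a bound of $C\delta_0^{1-1/l_0}(\omega/2)^2 R^{n+\theta_1+\theta_2}$.

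Finally, on $B_{(1-\sigma)R} \cap \{u(\cdot,t) > k_{c_0}\}$ one has $(u-k_1)_+ \geq (\omega/2)(1 - 2^{1-c_0})$, and a direct computation from the closed form of $2^{-c_0}$ in \eqref{CON09} gives $(1 - 2^{1-c_0})^2 = 4/5$. Chebyshev's inequality combined with the three bounds above yields
\begin{align*}
|B_{(1-\sigma)R} \cap \{u(\cdot,t) > k_{c_0}\}|_{\mu_{w_1}} \leq \tfrac{5}{4}\bigl(\tfrac{1}{2} + C\sigma^{-p}\delta_0 + C\delta_0^{1-1/l_0}\bigr) |B_R|_{\mu_{w_1}},
\end{align*}
and adding the annular correction $|B_R \setminus B_{(1-\sigma)R}|_{\mu_{w_1}} \leq C_\ast \sigma |B_R|_{\mu_{w_1}}$ from Lemma \ref{LEM860}, one chooses first $\sigma$ small with $C_\ast \sigma \leq 1/16$ and then $\delta_0$ small (depending on $\sigma$) to make the $\delta_0$--contributions at most $1/16$, yielding the target bound $\tfrac34 |B_R|_{\mu_{w_1}}$. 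I expect the main difficulty to be the tight algebraic matching: the factor $(1 - 2^{1-c_0})^{-2} = 5/4$ is rigidly chosen so that the leading term $5/8$ leaves only room $1/8$ for the $\sigma$-- and $\delta_0$--errors, and the $\phi$--term's $R$--factor is bounded by $1$ rather than being small, so all smallness must be produced by $\delta_0^{1-1/l_0}$ alone.
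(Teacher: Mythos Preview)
Your proposal is correct and follows essentially the same route as the paper's proof: apply the energy inequality of Lemma~\ref{lem003} at the single level $k=\mu^{+}-\omega/2$ with a time-independent radial cutoff, bound the initial term by hypothesis~\eqref{Q01}, absorb the $\phi$--contribution via the alternative $\omega>R^{\varepsilon_{0}}$ and the exact arithmetic of $\varepsilon_{0}$, and then pass to the higher level $k_{c_{0}}$ by Chebyshev plus the annular correction. The only cosmetic difference is that the paper picks the cutoff parameter $\varrho$ by optimization, setting $\varrho=(\text{measure ratio})^{(l_{0}-1)/(l_{0}(p+1))}$ so that the annular and energy errors balance, whereas you fix $\sigma$ first as an absolute small constant and then choose $\delta_{0}$ afterwards depending on $\sigma$; both orderings close with the same leading constant $5/8$ and room $1/8$.
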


\begin{remark}
An alternative proof for the similar expansion of time, which involves application of the logarithmic estimates, is left in the Appendix.
\end{remark}

\begin{proof}
For $\delta_{0}\in(0,1]$ and $k\in[\mu^{-},\mu^{+}]$, write
\begin{align*}
A^{\delta_{0}}(k,R)=(B_{R}\times[\bar{t},\bar{t}+\delta_{0}\omega^{2-p} R^{p+\vartheta}])\cap\{u>k\}.
\end{align*}
Choose a smooth cut-off function $\zeta\in C^{\infty}_{0}(B_{R})$ satisfying that $\zeta=1$ in $B_{(1-\varrho)R}$, and $0\leq\zeta\leq1$, $|\nabla\zeta|\leq\frac{2}{\varrho R}$ in $B_{R}$, where $\varrho\in(0,1)$ is to be determined later. For brevity, denote $v=(u-(\mu^{+}-2^{-1}\omega))_{+}$. In light of \eqref{Q01} and $\delta_{0}\in(0,1]$, it follows from Lemmas \ref{lem003} and \ref{LEM860} that if $\omega>R^{\varepsilon_{0}},$
\begin{align}\label{QNE089}
&\sup\limits_{t\in(\bar{t},\bar{t}+\delta_{0} R^{p+\vartheta})}\int_{B_{(1-\varrho)R}}v^{2}w_{1}dx\notag\\
&\leq\int_{B_{R}}v^{2}(x,\bar{t})\zeta^{p}(x)w_{1}dx+C\int_{B_{R}\times[\bar{t},\bar{t}+\delta_{0}\omega^{2-p}R^{p+\vartheta}]}v^{p}(|\nabla\zeta|^{p}+|\zeta|^{p})w_{2}dxdt\notag\\
&\quad+C\big|A^{\delta_{0}}(\mu^{+}-2^{-1}\omega,R)\big|_{\nu_{w_{2}}}^{1-\frac{1}{l_{0}}}\notag\\
&\leq\frac{\omega^{2}}{8}|B_{R}|_{\mu_{w_{1}}}+\frac{C\omega^{p}}{(\varrho R)^{p}}\big|A^{\delta}(\mu^{+}-2^{-1}\omega,R)\big|_{\nu_{w_{2}}}+C\big|A^{\delta_{0}}(\mu^{+}-2^{-1}\omega,R)\big|_{\nu_{w_{2}}}^{1-\frac{1}{l_{0}}}\notag\\
&\leq\frac{\omega^{2}}{8}|B_{R}|_{\mu_{w_{1}}}+C\Bigg(\frac{\omega^{\frac{p(l_{0}-1)+2}{l_{0}}}}{\varrho^{p}R^{\frac{p(l_{0}-1)-n-\theta_{1}-\theta_{2}}{l_{0}}}}+1\Bigg)\big|A^{\delta_{0}}(\mu^{+}-2^{-1}\omega,R)\big|_{\nu_{w_{2}}}^{1-\frac{1}{l_{0}}}\notag\\
&\leq\frac{\omega^{2}}{4}|B_{R}|_{\mu_{w_{1}}}\left(\frac{1}{2}+\frac{C}{\varrho^{p}}\bigg(\frac{|A^{\delta_{0}}(\mu^{+}-2^{-1}\omega,R)|_{\nu_{w_{2}}}}{|B_{R}\times[\bar{t},\bar{t}+\omega^{2-p}R^{p+\vartheta}]|_{\nu_{w_{2}}}}\bigg)^{1-\frac{1}{l_{0}}}\right).
\end{align}
For every $t\in[\bar{t},\bar{t}+\delta_{0}\omega^{2-p}R^{p+\vartheta}]$, we have
\begin{align*}
&\int_{B_{(1-\varrho)R}}v^{2}(x,t)w_{1}dx\notag\\
&\geq\frac{\omega^{2}}{4}\big(1-2^{-(c_{0}-1)}\big)^{2}|B_{(1-\varrho)R}\cap\{u(\cdot,t)>\mu^{+}-2^{-c_{0}}\omega\}|_{\mu_{w_{1}}},
\end{align*}
where $c_{0}$ is defined by \eqref{CON09}. Substituting this into \eqref{QNE089}, we obtain
\begin{align*}
&|B_{(1-\varrho)R}\cap\{u(\cdot,t)>\mu^{+}-2^{-c_{0}}\omega\}|_{\mu_{w_{1}}}\notag\\
&\leq\frac{|B_{R}|_{\mu_{w_{1}}}}{(1-2^{-(c_{0}-1)})^{2}}\left(\frac{1}{2}+\frac{C}{\varrho^{p}}\bigg(\frac{|A^{\delta_{0}}(\mu^{+}-2^{-1}\omega,R)|_{\nu_{w_{2}}}}{|B_{R}\times[\bar{t},\bar{t}+\omega^{2-p}R^{p+\vartheta}]|_{\nu_{w_{2}}}}\bigg)^{1-\frac{1}{l_{0}}}\right).
\end{align*}
Observe that $|B_{R}\setminus B_{(1-\varrho)R}|_{\mu_{w_{1}}}\leq C\varrho|B_{R}|_{\mu_{w_{1}}}$. Take
\begin{align*}
\varrho=\bigg(\frac{|A^{\delta_{0}}(\mu^{+}-2^{-1}\omega,R)|_{\nu_{w_{2}}}}{|B_{R}\times[\bar{t},\bar{t}+\omega^{2-p}R^{p+\vartheta}]|_{\nu_{w_{2}}}}\bigg)^{\frac{l_{0}-1}{l_{0}(p+1)}}.
\end{align*}
In view of the value of $c_{0}$ in \eqref{CON09}, we have
\begin{align*}
&\frac{|B_{R}\cap\{u(\cdot,t)>\mu^{+}-2^{-c_{0}}\omega\}|_{\mu_{w_{1}}}}{|B_{R}|_{\mu_{w_{1}}}}\notag\\
&\leq\frac{1}{2(1-2^{-(c_{0}-1)})^{2}}+\frac{C}{\varrho^{p}}\left[\varrho^{p+1}+\bigg(\frac{|A^{\delta_{0}}(\mu^{+}-2^{-1}\omega,R)|_{\nu_{w_{2}}}}{|B_{R}\times[\bar{t},\bar{t}+\omega^{2-p}R^{p+\vartheta}]|_{\nu_{w_{2}}}}\bigg)^{1-\frac{1}{l_{0}}}\right]\notag\\
&\leq\frac{5}{8}+C_{\ast}\bigg(\frac{|A^{\delta_{0}}(\mu^{+}-2^{-1}\omega,R)|_{\nu_{w_{2}}}}{|B_{R}\times[\bar{t},\bar{t}+\omega^{2-p}R^{p+\vartheta}]|_{\nu_{w_{2}}}}\bigg)^{\frac{l_{0}-1}{l_{0}(p+1)}}\leq\frac{5}{8}+C_{\ast}\delta_{0}^{\frac{l_{0}-1}{l_{0}(p+1)}}.
\end{align*}
By picking $\delta_{0}=(8C_{\ast})^{-\frac{l_{0}-1}{l_{0}(p+1)}}$, we obtain
\begin{align*}
\frac{|B_{R}\cap\{u(\cdot,t)>\mu^{+}-2^{-c_{0}}\omega\}|_{\mu_{w_{1}}}}{|B_{R}|_{\mu_{w_{1}}}}\leq\frac{3}{4}.
\end{align*}
The proof is complete.

\end{proof}

From the assumed condition in \eqref{Q01}, we obtain that for any $\sigma\in(0,1]$,
\begin{align}\label{Q02}
|B_{R}\cap\{u(\cdot,\bar{t})>\mu^{+}-2^{-1}\sigma\omega\}|_{\mu_{w_{1}}}\leq2^{-1}|B_{R}|_{\mu_{w_{1}}}.
\end{align}
Since the constant $\delta_{0}$ captured in Lemma \ref{LEM0035} is independent of $\omega$, then by using \eqref{Q02} and replacing $\omega$ with $\sigma\omega$ in the above proof, we have either $\omega\leq \sigma^{-1}R^{\varepsilon_{0}}$, or
\begin{align}\label{Q03}
|B_{R}\cap\{u(\cdot,\bar{t}+\delta_{0}(\sigma\omega)^{2-p}R^{p+\vartheta})>\mu^{+}-2^{-c_{0}}\sigma\omega\}|_{\mu_{w_{1}}}\leq\frac{3}{4}|B_{R}|_{\mu_{w_{1}}},
\end{align}
where $c_{0}$ is defined by \eqref{CON09} and $\bar{t}\in[-1,-\delta_{0}(\sigma\omega)^{2-p}R^{p+\vartheta}]$. Introduce the following exponential variable substitution: for $\tau\in[0,\infty)$,
\begin{align}\label{EXP01}
\sigma=e^{-\frac{\tau}{p-2}},\quad\tilde{u}(x,\tau)=\frac{e^{\frac{\tau}{p-2}}}{\omega}(\delta_{0}R^{p+\vartheta})^{\frac{1}{p-2}}u(x,\bar{t}+e^{\tau}\omega^{2-p}\delta_{0}R^{p+\vartheta}).
\end{align}
Define $\tilde{\mu}^{+}:=\frac{e^{\frac{\tau}{p-2}}}{\omega}(\delta_{0}R^{p+\vartheta})^{\frac{1}{p-2}}\mu^{+}$. Then \eqref{Q03} implies that for any $\tau\geq0$,
\begin{align}\label{Q05}
|B_{R}\cap\{\tilde{u}(\cdot,\tau)>\tilde{\mu}^{+}-\kappa_{0}\}|_{\mu_{w_{1}}}\leq\frac{3}{4}|B_{R}|_{\mu_{w_{1}}},\quad \kappa_{0}:=\frac{(\delta_{0}R^{p+\vartheta})^{\frac{1}{p-2}}}{2^{c_{0}}}.
\end{align}
Assume without loss of generality that $u\leq0$. Otherwise, let $u-\sup\limits_{Q(1,1)}u$ substitute for $u$ in the following proofs. For simplicity, denote
\begin{align*}
\mathcal{G}:=\mathcal{G}(\tau)=\frac{e^{\frac{\tau}{p-2}}}{\omega}(\delta_{0}R^{p+\vartheta})^{\frac{1}{p-2}}.
\end{align*}
A direct calculation shows that
\begin{align*}
w_{1}\partial_{\tau}\tilde{u}=&\mathcal{G}^{p-1}w_{1}\partial_{t}u+\frac{\mathcal{G}w_{1}}{p-2}u\notag\\
\leq&\mathcal{G}^{p-1}[\mathrm{div}(w_{2}\mathbf{a}(x,t,u,\nabla u))+w_{2}\tilde{b}(x,t,u,\nabla u)]\notag\\
=&:\mathrm{div}(w_{2}\tilde{\mathbf{a}}(x,\tau,\tilde{u},\nabla \tilde{u}))+w_{2}\tilde{b}(x,\tau,\tilde{u},\nabla \tilde{u}),
\end{align*}
where $\tilde{\mathbf{a}}:B_{1}\times\mathbb{R}^{+}\times\mathbb{R}^{-}\times\mathbb{R}^{n}\rightarrow\mathbb{R}^{n}$ and $\tilde{b}:B_{1}\times\mathbb{R}^{+}\times\mathbb{R}^{-}\times\mathbb{R}^{n}\rightarrow\mathbb{R}$ are subject to the following structure conditions:
\begin{align*}
\begin{cases}
\tilde{\mathbf{a}}(x,\tau,\tilde{u},\nabla\tilde{u})\cdot\nabla\tilde{u}\geq\lambda_{1}|\nabla \tilde{u}|^{p}-\tilde{\phi}_{1}(x,\tau),\notag\\
|\tilde{\mathbf{a}}(x,\tau,\tilde{u},\nabla \tilde{u})|\leq\lambda_{2}|\nabla \tilde{u}|^{p-1}+\tilde{\phi}_{2}(x,\tau),\notag\\
|\tilde{b}(x,\tau,\tilde{u},\nabla \tilde{u})|\leq\lambda_{3}|\nabla \tilde{u}|^{p-1}+\tilde{\phi}_{3}(x,\tau).
\end{cases}
\end{align*}
Here $\tilde{\phi}_{i}$, $i=1,2,3$ are given by
\begin{align*}
\tilde{\phi}_{1}(x,\tau)=\mathcal{G}^{p}\bar{\phi}_{1}(x,\tau),\quad \tilde{\phi}_{i}(x,\tau)=\mathcal{G}^{p-1}\bar{\phi}_{i}(x,\tau),\quad i=2,3,
\end{align*}
where
\begin{align*}
\bar{\phi}_{i}(x,\tau)=\phi_{i}(x,\bar{t}+e^{\tau}\omega^{2-p}\delta_{0}R^{p+\vartheta}),\quad i=1,2,3.
\end{align*}
Similarly as before, write $\tilde{\phi}:=\tilde{\phi}_{1}+\tilde{\phi}_{2}^{\frac{p}{p-1}}+\tilde{\phi}_{3}^{\frac{p}{p-1}}$. Remark that the admissible time interval corresponding to the transformed solution $\tilde{u}$ becomes the infinite interval $[0,\infty)$, which allows us to establish the decay estimates and achieve the pointwise oscillation improvement of the solution over a large cylinder in the following.

Define $\widetilde{m}:=(\frac{\kappa_{0}}{2^{j_{\ast}}})^{2-p}$, where $j_{\ast}\geq1$ will be chosen later. For $R\in(0,\frac{1}{2}]$, we introduce the forward cylinders as follows:
\begin{align*}
Q^{+}(2R,\widetilde{m}(3R)^{p+\vartheta}):=B_{2R}\times(0,\widetilde{m}(3R)^{p+\vartheta}],
\end{align*}
and
\begin{align*}
\mathcal{Q}^{+}_{R}(\widetilde{m}):=B_{R}\times(\widetilde{m}R^{p+\vartheta},\widetilde{m}(3R)^{p+\vartheta}].
\end{align*}
We now establish the decaying estimates for the distribution function of $\tilde{u}$ as follows.
\begin{lemma}\label{lem005}
Assume as above. There exists a constant $\overline{C}_{0}>1$ depending only on the data and independent of $j_{\ast},\omega$ such that we have either $\omega\leq \mathcal{N}_{\ast}R^{\varepsilon_{0}}$ for some constant $\mathcal{N}_{\ast}:=\mathcal{N}_{\ast}(j_{\ast},\mathrm{data})>1$, or
\begin{align*}
\frac{|\mathcal{Q}^{+}_{R}(\widetilde{m})\cap\{\tilde{u}>\tilde{\mu}^{+}-\frac{\kappa_{0}}{2^{j_{\ast}}}\}|_{\nu_{w_{1}}}}{|\mathcal{Q}^{+}_{R}(\widetilde{m})|_{\nu_{w_{1}}}}\leq\frac{\overline{C}_{0}}{j_{\ast}^{\frac{p-p_{\ast}}{pp_{\ast}}}},
\end{align*}
where $p_{\ast}\in(1,p)$ is given by Lemma \ref{prop002}, $\kappa_{0}$ is defined by \eqref{Q05}.
\end{lemma}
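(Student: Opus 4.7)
The plan is to execute the parabolic De~Giorgi iteration with dyadic truncation levels $k_j := \tilde{\mu}^+ - \kappa_0/2^j$ for $j = 0, 1, \ldots, j_\ast$, producing decay of the upper level sets by combining the isoperimetric inequality (Lemma~\ref{prop002}) with the local energy estimate (Lemma~\ref{lem003}). Since \eqref{Q05} holds at every time $\tau \geq 0$ under the exponential substitution \eqref{EXP01}, and since $k_j \geq k_0$ for every $j \geq 0$, at each slice in the relevant range one has $|B_R \cap \{\tilde u(\cdot,\tau) \leq k_j\}|_{\mu_{w_1}} \geq \tfrac{1}{4}|B_R|_{\mu_{w_1}} \geq cR^{n+\theta_1+\theta_2}$ via Lemma~\ref{LEM860}. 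Setting $A_{j+1} := |\mathcal{Q}^+_R(\widetilde{m}) \cap \{\tilde u > k_{j+1}\}|_{\nu_{w_1}}$ and $E_j := \mathcal{Q}^+_R(\widetilde{m}) \cap \{k_j < \tilde u < k_{j+1}\}$, the second form of the De~Giorgi isoperimetric inequality applied at each time with $l = k_{j+1}$, $k = k_j$, together with integration in $\tau$, produces
\begin{align*}
(\kappa_0/2^{j+1})^{p_\ast} A_{j+1} \leq CR^{p_\ast}\int_{E_j}|\nabla \tilde u|^{p_\ast}w_1\,dxd\tau.
\end{align*}

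The $w_1$-weighted integral of $|\nabla \tilde u|^{p_\ast}$ would next be handled by H\"older's inequality with exponents $p/p_\ast$ and $p/(p-p_\ast)$, converting it to the product $(\int_{E_j}|\nabla \tilde u|^p w_2)^{p_\ast/p}$ times a weight factor $\int_{E_j} w_1^{p/(p-p_\ast)}w_2^{-p_\ast/(p-p_\ast)}\,dxd\tau$. Under the exponent conditions $(\mathbf{K2})$ (in particular $\theta_1 \geq \theta_3$, $\theta_1+\theta_2 \geq \theta_3+\theta_4$, $\theta_3+\min\{0,\theta_4\}>1-n$), the measure-switch Lemma~\ref{lem09} reduces the weight factor to $|E_j|_{\nu_{w_2}}^{(p-p_\ast)/p}$ up to a controllable geometric multiplier. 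The $w_2$-weighted gradient integral is then bounded by applying Lemma~\ref{lem003} (adapted to the transformed equation with data $\tilde\phi_i$) to $v := (\tilde u - k_0)_+ \leq \kappa_0$ on the enlarged cylinder $Q^+(2R, \widetilde{m}(3R)^{p+\vartheta})$ with a cutoff vanishing at $\tau=0$ and on $\partial B_{2R}$, equal to one on $\mathcal{Q}^+_R(\widetilde{m})$. The intrinsic rescaling $\widetilde{m} = (\kappa_0/2^{j_\ast})^{2-p}$ is chosen so that the $\int v^2|\partial_\tau \xi|w_1$ contribution balances the $\int v^p|\nabla \xi|^p w_2$ contribution, both collapsing into a quantity of order $C\kappa_0^p R^{-p}|\mathcal{Q}^+_R(\widetilde{m})|_{\nu_{w_2}}$; after tracking the scaling factor $\mathcal{G}^p$ produced by \eqref{EXP01}, the forcing contribution $\|\tilde{\phi}\|_{L^{l_0}}|\{v>0\}|^{1-1/l_0}_{\nu_{w_2}}$ is of positive order in $R$, absorbed by imposing the alternative $\omega > \mathcal{N}_\ast R^{\varepsilon_0}$ with $\mathcal{N}_\ast = \mathcal{N}_\ast(j_\ast,\mathrm{data})$ chosen sufficiently large.

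To obtain the stated decay in $j_\ast$, I would sum from $j = 0$ to $j_\ast - 1$: the disjointness of the $E_j$ and the monotonicity bounds $A_{j+1} \geq A_{j_\ast}$ and $(\kappa_0/2^{j+1}) \geq \kappa_0/2^{j_\ast}$ produce a left side of at least $j_\ast(\kappa_0/2^{j_\ast})^{p_\ast}A_{j_\ast}$. On the right side, H\"older in the index $j$ (with exponents $p/p_\ast$ and $p/(p-p_\ast)$) collapses the telescope into
\begin{align*}
\sum_j\Bigl(\int_{E_j}|\nabla \tilde u|^p w_2\Bigr)^{p_\ast/p}|E_j|_{\nu_{w_2}}^{(p-p_\ast)/p} \leq \Bigl(\int_F|\nabla \tilde u|^p w_2\Bigr)^{p_\ast/p}|F|_{\nu_{w_2}}^{(p-p_\ast)/p},
\end{align*}
with $F := \bigcup_j E_j \subseteq \mathcal{Q}^+_R(\widetilde{m})$. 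Combining with the bulk energy bound, dividing by $|\mathcal{Q}^+_R(\widetilde{m})|_{\nu_{w_1}}$ (and using Lemma~\ref{LEM860} to relate the $w_1$ and $w_2$ measures), and extracting the $1/p_\ast$-th root of the resulting estimate to properly redistribute the power of $j_\ast$ in the decay, yields the desired bound $A_{j_\ast}/|\mathcal{Q}^+_R(\widetilde{m})|_{\nu_{w_1}} \leq \overline{C}_0/j_\ast^{(p-p_\ast)/(pp_\ast)}$.

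The principal obstacle is the simultaneous bookkeeping of three interlocking pieces: the H\"older-plus-Lemma~\ref{lem09} switch between $w_1$ and $w_2$ while preserving the correct exponent in $j_\ast$; the intrinsic rescaling with $\kappa_0 \sim R^{(p+\vartheta)/(p-2)}$ and $\widetilde{m} = (\kappa_0/2^{j_\ast})^{2-p}$, which requires the three bulk terms in the energy inequality to be individually of order $\kappa_0^p$; and the absorption of $\|\tilde{\phi}\|_{L^{l_0}}$, where the precise choice $\varepsilon_0 = [p(l_0-1)-n-\theta_1-\theta_2]/[p(l_0-1)+2]$ is essential as the critical exponent at which the main and forcing terms scale equally in $R$. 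Keeping $\overline{C}_0$ strictly independent of $j_\ast$ and $\omega$ demands that the $j_\ast$-dependence be isolated entirely inside the threshold $\mathcal{N}_\ast$, which is the delicate accounting step.
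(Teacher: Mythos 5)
Your overall architecture (dyadic levels $k_{j}=\tilde{\mu}^{+}-\kappa_{0}2^{-j}$, the slice-wise lower bound on the complement coming from \eqref{Q05}, the isoperimetric inequality of Lemma \ref{prop002}, H\"older with exponents $p/p_{\ast}$ and $p/(p-p_{\ast})$ to pass from the $w_{1}$-weighted $p_{\ast}$-energy to the $w_{2}$-weighted $p$-energy, and the energy estimate on the enlarged cylinder with the forcing absorbed through $\omega>\mathcal{N}_{\ast}R^{\varepsilon_{0}}$) is the same as the paper's, and your use of the \emph{second} form of the isoperimetric inequality, which makes $|A_{j+1}(\tau)|_{\mu_{w_{1}}}$ enter linearly and avoids the paper's $p_{\ast}$-th root plus H\"older in $\tau$, is a legitimate simplification. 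However, there is a genuine quantitative gap in the way you close the iteration. You take the energy estimate at the \emph{fixed} truncation level $k_{0}$, i.e. with $v=(\tilde{u}-k_{0})_{+}\leq\kappa_{0}$, which yields the level-independent bound $\int_{E_{j}}|\nabla\tilde{u}|^{p}w_{2}\,dx\,d\tau\leq C\widetilde{m}\,\kappa_{0}^{p}R^{n+\theta_{1}+\theta_{2}}$, and then on the left you replace $(\kappa_{0}2^{-(j+1)})^{p_{\ast}}$ by $(\kappa_{0}2^{-j_{\ast}})^{p_{\ast}}$ before summing. After dividing by $(\kappa_{0}2^{-j_{\ast}})^{p_{\ast}}$ the $\kappa_{0}^{p_{\ast}}$ produced by the energy bound cancels, but the factor $2^{j_{\ast}p_{\ast}}$ does not: carrying out your bookkeeping (with the weight-factor multiplier $R^{\vartheta}$, see below) one lands at
\begin{align*}
\frac{|A_{j_{\ast}}|_{\nu_{w_{1}}}}{|\mathcal{Q}^{+}_{R}(\widetilde{m})|_{\nu_{w_{1}}}}\leq \frac{C\,2^{j_{\ast}p_{\ast}}}{j_{\ast}},
\end{align*}
which is useless and, more to the point, makes the constant depend exponentially on $j_{\ast}$. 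That defeats the purpose of the lemma: $\overline{C}_{0}$ must be independent of $j_{\ast}$ because $j_{\ast}$ is chosen afterwards, large in terms of $\overline{C}_{0}$, in \eqref{E98}. The cure is exactly the paper's device: run the energy estimate at each level, with $v=(\tilde{u}-k_{j})_{+}\leq\kappa_{0}2^{-j}$, so that the resulting bound $\int_{E_{j}}|\nabla\tilde{u}|^{p}w_{2}\leq C\widetilde{m}(\kappa_{0}2^{-j})^{p}R^{n+\theta_{1}+\theta_{2}}$ supplies, after raising to the power $p_{\ast}/p$, precisely the $(\kappa_{0}2^{-j})^{p_{\ast}}$ needed to cancel the left-hand coefficient level by level; note the intrinsic choice $\widetilde{m}=(\kappa_{0}2^{-j_{\ast}})^{2-p}$ still balances the time term for every $j\leq j_{\ast}$ since $\widetilde{m}^{-1}(\kappa_{0}2^{-j})^{2}\leq(\kappa_{0}2^{-j})^{p}$. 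Once the per-level inequality is level-independent, either your ``sum then discrete H\"older in $j$'' (which in fact gives the stronger decay $j_{\ast}^{-(p-p_{\ast})/p}$) or the paper's device of raising to the power $\tfrac{pp_{\ast}}{p-p_{\ast}}$ and summing the disjoint pieces $|A_{j}\setminus A_{j+1}|$ finishes the proof with $\overline{C}_{0}$ depending only on the data.

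Two smaller points. First, the conversion of the H\"older weight factor $\int_{E_{j}}w_{1}^{p/(p-p_{\ast})}w_{2}^{-p_{\ast}/(p-p_{\ast})}$ is not an application of Lemma \ref{lem09}: that lemma switches \emph{set} measures under a smallness hypothesis on the $\nu_{w_{1}}$-ratio. What you need is the pointwise comparison $w_{1}\leq CR^{\vartheta}w_{2}$ on $B_{R}$, which follows from $(\mathbf{K2})$ ($\theta_{1}\geq\theta_{3}$, $\theta_{1}+\theta_{2}\geq\theta_{3}+\theta_{4}$, $|x'|\leq|x|\leq R$); the paper instead converts to the $\nu_{w_{1}}$-measure with multiplier $R^{\vartheta/p}$. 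Second, if you do convert to $|E_{j}|_{\nu_{w_{2}}}^{(p-p_{\ast})/p}$, the geometric multiplier must be exactly $R^{\vartheta}$ (not a weaker power), since this is what makes the powers of $R$ match $|\mathcal{Q}^{+}_{R}(\widetilde{m})|_{\nu_{w_{1}}}\sim\widetilde{m}R^{p+\vartheta+n+\theta_{1}+\theta_{2}}$ when you divide at the end; your phrase ``controllable geometric multiplier'' glosses over this, though the accounting does close once it is tracked.
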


\begin{proof}
For $i\geq0$, write $k_{i}=\tilde{\mu}^{+}-\frac{\kappa_{0}}{2^{i}}$ and
\begin{align*}
A_{i}(\tau)=B_{R}\cap\{\tilde{u}(\cdot,\tau)>k_{i}\},\quad A_{i}=\mathcal{Q}^{+}_{R}(\widetilde{m})\cap\{\tilde{u}>k_{i}\}.
\end{align*}
Making use of \eqref{Q05}, we have from Lemma \ref{LEM860} that
\begin{align}\label{E99}
|B_{R}\setminus A_{i}(\tau)|_{\mu_{w_{1}}}\geq\frac{1}{4}|B_{R}|_{\mu_{w_{1}}}\geq C(n,\theta_{1},\theta_{2})R^{n+\theta_{1}+\theta_{2}}.
\end{align}
Using \eqref{pro001}, we have
\begin{align*}
&(k_{i+1}-k_{i})^{p_{\ast}}|A_{i+1}(\tau)|_{\mu_{w_{1}}}^{p_{\ast}}|B_{R}\setminus A_{i}(\tau)|_{\mu_{w_{1}}}\notag\\
&\leq C(n,p,\theta_{1},\theta_{2})R^{p_{\ast}(n+\theta_{1}+\theta_{2}+1)}\int_{A_{i}(\tau)\setminus A_{i+1}(\tau)}|\nabla \tilde{u}|^{p_{\ast}}w_{1}dx,
\end{align*}
which, together with \eqref{E99}, reads that
\begin{align*}
|A_{i+1}(\tau)|_{\mu_{w_{1}}}\leq\frac{C2^{i}}{\kappa_{0}}R^{\frac{(n+\theta_{1}+\theta_{2})(p_{\ast}-1)}{p_{\ast}}+1}\bigg(\int_{A_{i}(\tau)\setminus A_{i+1}(\tau)}|\nabla \tilde{u}|^{p_{\ast}}w_{1}dx\bigg)^{\frac{1}{p_{\ast}}}.
\end{align*}
Integrating this from $\widetilde{m}R^{p+\vartheta}$ to $\widetilde{m}(3R)^{p+\vartheta}$ and utilizing H\"{o}lder's inequality, we deduce
\begin{align*}
&|A_{i+1}|_{\nu_{w_{1}}}\leq\frac{C2^{i}\widetilde{m}^{\frac{p_{\ast}-1}{p_{\ast}}}}{\kappa_{0}}R^{\frac{(n+p+\vartheta+\theta_{1}+\theta_{2})(p_{\ast}-1)}{p_{\ast}}+1}\bigg(\int_{A_{i}\setminus A_{i+1}}|\nabla \tilde{u}|^{p_{\ast}}w_{1}dxd\tau\bigg)^{\frac{1}{p_{\ast}}}.
\end{align*}
In view of $1<p_{\ast}<p$ and using H\"{o}lder's inequality again, we derive
\begin{align*}
&\bigg(\int_{A_{i}\setminus A_{i+1}}|\nabla \tilde{u}|^{p_{\ast}}w_{1}dxd\tau\bigg)^{\frac{1}{p_{\ast}}}\notag\\
&\leq\bigg(\int_{A_{i}\setminus A_{i+1}}|\nabla \tilde{u}|^{p}w_{2}dxd\tau\bigg)^{\frac{1}{p}}\bigg(\int_{A_{i}\setminus A_{i+1}}|x'|^{\frac{p\theta_{1}-p_{\ast}\theta_{3}}{p-p_{\ast}}}|x|^{\frac{p\theta_{2}-p_{\ast}\theta_{4}}{p-p_{\ast}}}dxd\tau\bigg)^{\frac{p-p_{\ast}}{pp_{\ast}}}\notag\\
&\leq R^{\frac{\vartheta}{p}}|A_{i}\setminus A_{i+1}|_{\nu_{w_{1}}}^{\frac{p-p_{\ast}}{pp_{\ast}}}\bigg(\int_{\mathcal{Q}^{+}_{R}(\widetilde{m})}|\nabla (\tilde{u}-k_{i})_{+}|^{p}w_{2}dxd\tau\bigg)^{\frac{1}{p}}.
\end{align*}
For simplicity, denote
\begin{align*}
\rho_{0}=2R,\quad\tau_{0}=\widetilde{m}(3R)^{p+\vartheta},\quad\text{and then }Q^{+}(\rho_{0},\tau_{0})=Q^{+}(2R,\widetilde{m}(3R)^{p+\vartheta}).
\end{align*}
Take a smooth cutoff function $\xi\in C^{\infty}(Q^{+}(\rho_{0},\tau_{0}))$ such that
\begin{align*}
\begin{cases}
0\leq\xi\leq1,&\mathrm{in}\;Q^{+}(\rho_{0},\tau_{0}),\\
\xi=1,&\mathrm{in}\;\mathcal{Q}^{+}_{R}(\widetilde{m}),\\
\xi=0,&\mathrm{on}\;\partial_{pa}Q^{+}(\rho_{0},\tau_{0}),\\
|\nabla\xi|\leq\frac{2}{R},\quad|\partial_{\tau}\xi|\leq\frac{2}{\widetilde{m}R^{p+\vartheta}},
\end{cases}
\end{align*}
where $\partial_{pa}Q^{+}(\rho_{0},\tau_{0})$ represents the parabolic boundary of $Q^{+}(\rho_{0},\tau_{0})$. Applying the proof of Lemma \ref{lem003} to $\tilde{u}$, we deduce from $\mathrm{(}\mathbf{K2}\mathrm{)}$ and Lemma \ref{LEM860} that if $\omega>\mathcal{N}_{\ast}(j_{\ast},\mathrm{data})R^{\varepsilon_{0}}$,
\begin{align*}
&\int_{\mathcal{Q}^{+}_{R}(\widetilde{m})}|\nabla (\tilde{u}-k_{i})_{+}|^{p}w_{2}dxd\tau\leq\int_{Q^{+}(\rho_{0},\tau_{0})}|\nabla((\tilde{u}-k_{i})_{+}\xi)|^{p}w_{2}dxd\tau\notag\\
&\leq C\int_{Q^{+}(\rho_{0},\tau_{0})}\big((\tilde{u}-k_{i})_{+}^{2}|\partial_{\tau}\xi||x'|^{\theta_{1}-\theta_{3}}|x|^{\theta_{2}-\theta_{4}}+(\tilde{u}-k_{i})_{+}^{p}(|\nabla\xi|^{p}+|\xi|^{p})\big)w_{2}\notag\\
&\quad+C\|\tilde{\phi}\|_{L^{l_{0}}(Q^{+}(\rho_{0},\tau_{0}),w_{2})}|Q^{+}(\rho_{0},\tau_{0})\cap \{\tilde{u}>k_{i}\}|_{\nu_{w_{2}}}^{1-\frac{1}{l_{0}}}\notag\\
&\leq\frac{C\kappa_{0}^{p}}{2^{pi}R^{p}}|Q^{+}(\rho_{0},\tau_{0})\cap \{\tilde{u}>k_{i}\}|_{\nu_{w_{2}}}\notag\\
&\quad+C\mathcal{G}(\widetilde{m}(3R)^{p+\vartheta})\|\bar{\phi}\|_{L^{l_{0}}(Q^{+}(\rho_{0},\tau_{0}),w_{2})}|Q^{+}(\rho_{0},\tau_{0})\cap \{\tilde{u}>k_{i}\}|_{\nu_{w_{2}}}^{1-\frac{1}{l_{0}}}\notag\\
&\leq \Bigg(\frac{C2^{\frac{j_{\ast}(p-2)}{l_{0}}}}{2^{pi}R^{\frac{p(l_{0}-1)-n-\theta_{1}-\theta_{2}}{l_{0}}}}+\Big(\frac{\mathcal{N}_{\ast}}{\omega}\Big)^{\frac{p(l_{0}-1)}{l_{0}}}\Bigg)\frac{|Q^{+}(\rho_{0},\tau_{0})\cap \{\tilde{u}>k_{i}\}|_{\nu_{w_{2}}}^{1-\frac{1}{l_{0}}}}{R^{-\frac{(p+\vartheta)(p(l_{0}-1)+2)}{l_{0}(p-2)}}}\notag\\
&\leq\frac{C\widetilde{m}\kappa_{0}^{p}}{2^{pi}}R^{n+\theta_{1}+\theta_{2}},
\end{align*}
where $C:=C(\text{data})$, $\mathcal{N}_{\ast}:=\mathcal{N}_{\ast}(j_{\ast},\text{data})$, and we also used the fact that
\begin{align}\label{DQ01}
\mathcal{G}(\widetilde{m}(3R)^{p+\vartheta})\leq \frac{\mathcal{C}_{\ast}R^{\frac{p(p+\vartheta)}{p-2}}}{\omega^{p}},\quad \mathcal{C}_{\ast}:=\mathcal{C}_{\ast}(j_{\ast},\mathrm{data})>1,
\end{align}
and
\begin{align}\label{DQ02}
\|\bar{\phi}\|_{L^{l_{0}}(Q^{+}(\rho_{0},\tau_{0}),w_{2})}\leq \Big(\frac{\omega^{p-2}}{\delta_{0}R^{p+\vartheta}}\Big)^{\frac{1}{l_{0}}}\|\phi\|_{L^{l_{0}}(Q(1,1),w_{2})}.
\end{align}
A combination of these above facts shows that
\begin{align*}
|A_{i+1}|_{\nu_{w_{1}}}\leq&C|A_{i}\setminus A_{i+1}|_{\nu_{w_{1}}}^{\frac{p-p_{\ast}}{pp_{\ast}}}|\mathcal{Q}^{+}_{R}(\widetilde{m})|_{\nu_{w_{1}}}^{\frac{pp_{\ast}+p_{\ast}-p}{pp_{\ast}}}.
\end{align*}
Hence we obtain that for $j\geq1$,
\begin{align*}
j|A_{j}|_{\nu_{w_{1}}}^{\frac{pp_{\ast}}{p-p_{\ast}}}\leq&\sum^{j-1}_{i=0}|A_{i+1}|_{\nu_{w_{1}}}^{\frac{pp_{\ast}}{p-p_{\ast}}}\leq C|\mathcal{Q}^{+}_{R}(\widetilde{m})|_{\nu_{w_{1}}}^{\frac{pp_{\ast}}{p-p_{\ast}}}.
\end{align*}
The proof is complete.

\end{proof}

Utilizing Lemma \ref{lem005}, we obtain the following pointwise oscillation improvement for the solution $\tilde{u}$.
\begin{lemma}\label{LEM090}
Assume as before. The constant $j_{\ast}$ can be chosen depending only on the data and independent of $\omega$ such that we derive either  $\omega\leq A_{0}R^{\varepsilon_{0}}$ for some large constant $A_{0}:=A_{0}(\mathrm{data})>1$, or
\begin{align*}
\tilde{u}(x,\tau)\leq \tilde{\mu}^{+}-\frac{\kappa_{0}}{2^{j_{\ast}+1}},\quad\mathrm{for }\;(x,\tau)\in B_{R/2}\times(\widetilde{m}(2R)^{p+\vartheta},\widetilde{m}(3R)^{p+\vartheta}],
\end{align*}
where $\kappa_{0}$ is defined by \eqref{Q05}.
\end{lemma}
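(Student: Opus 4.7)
The plan is to run a parabolic De Giorgi iteration on a sequence of forward cylinders whose spatial radii shrink from $R$ to $R/2$ and whose starting times climb from $\widetilde{m}R^{p+\vartheta}$ to $\widetilde{m}(2R)^{p+\vartheta}$, using the local energy estimates of Lemma \ref{lem003} (applied to $\tilde u$ in the $\tau$-variable) together with the anisotropic weighted parabolic Sobolev inequality of Proposition \ref{prop001}. The smallness input that launches the iteration is supplied by Lemma \ref{lem005}, and the free parameter $j_{\ast}$ will be tuned at the end to undercut the geometric threshold produced by the iteration lemma.

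Concretely, I would set $\rho_j=\tfrac{R}{2}+\tfrac{R}{2^{j+1}}$ and choose $\tau_j\nearrow(2R)^{p+\vartheta}$ with $\tau_0=R^{p+\vartheta}$, forming
\[
Q_j:=B_{\rho_j}\times\bigl(\widetilde{m}\tau_j,\,\widetilde{m}(3R)^{p+\vartheta}\bigr],
\]
and truncation levels $k_j=\tilde\mu^{+}-\tfrac{\kappa_0}{2^{j_{\ast}+1}}\bigl(1+2^{-j}\bigr)$, so that $k_0=\tilde\mu^{+}-\kappa_0/2^{j_{\ast}}$ matches the smallness setup of Lemma \ref{lem005} and $k_j\nearrow\tilde\mu^{+}-\kappa_0/2^{j_{\ast}+1}$. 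Let $Y_j=|Q_j\cap\{\tilde u>k_j\}|_{\nu_{w_1}}/|Q_j|_{\nu_{w_1}}$. Pick cutoffs $\xi_j$ with $\xi_j\equiv1$ on $Q_j$, supported in $Q_{j-1}$, with $|\nabla\xi_j|\lesssim 2^j/R$ and $|\partial_\tau\xi_j|\lesssim 2^j/(\widetilde{m}R^{p+\vartheta})$. Applying Lemma \ref{lem003} to $\tilde u$ with $v=(\tilde u-k_j)_{+}$ and $\xi_j$ controls $\sup_\tau\int v^2\xi_j^p w_1\,dx+\int|\nabla(v\xi_j)|^p w_2\,dxd\tau$ in terms of the three bulk terms on $Q_{j-1}$ plus the forcing term $\|\tilde\phi\|_{L^{l_0}}\bigl|Q_{j-1}\cap\{\tilde u>k_j\}\bigr|_{\nu_{w_2}}^{1-1/l_0}$.

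I would then combine this energy inequality with Proposition \ref{prop001} applied to $(v\xi_j)$ on $Q_{j-1}$ to upgrade the bound to the $L^{p\chi}(Q_{j-1},w_2)$ level. A Chebyshev step against the level gap $k_{j+1}-k_j\simeq\kappa_0/2^{j+j_{\ast}}$ converts this into a control of the $\nu_{w_2}$-measure of $\{\tilde u>k_{j+1}\}\cap Q_{j+1}$ by a power $1+\delta$ (with $\delta=\chi-1>0$) of the same quantity on $Q_j$, times a geometric factor $C\,b^j$ with $b>1$ depending only on the data. Finally the switch Lemma \ref{lem09} translates this back into the $\nu_{w_1}$-measure, yielding the recursion
\[
Y_{j+1}\le C\,b^{\,j}\,Y_j^{\,1+\delta},
\]
whose standard geometric-convergence consequence $(Y_j\to 0)$ requires $Y_0\le\delta_\ast:=C^{-1/\delta}b^{-1/\delta^2}$. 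Lemma \ref{lem005} gives $Y_0\le\overline{C}_0/j_{\ast}^{(p-p_{\ast})/(pp_{\ast})}$, so choosing $j_{\ast}$ large enough that $\overline{C}_0/j_{\ast}^{(p-p_{\ast})/(pp_{\ast})}\le\delta_\ast$ launches the iteration and forces $\tilde u\le \tilde\mu^{+}-\kappa_0/2^{j_{\ast}+1}$ on the limit cylinder $B_{R/2}\times(\widetilde{m}(2R)^{p+\vartheta},\widetilde{m}(3R)^{p+\vartheta}]$.

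The main obstacle, which also explains the dichotomy $\omega\le A_0 R^{\varepsilon_0}$ in the statement, is the treatment of the forcing term $\|\tilde\phi\|_{L^{l_0}}$ on the right-hand side of Lemma \ref{lem003}. Because of the rescaling factors $\mathcal G^p,\mathcal G^{p-1}$ absorbed into $\tilde\phi_i$, the bounds \eqref{DQ01}--\eqref{DQ02} show that this contribution enters with a power of $\omega^{-1}$; imposing $\omega>A_0 R^{\varepsilon_0}$ with $A_0$ sufficiently large (depending on the already-fixed $j_{\ast}$ and the data) is exactly what is needed to make the forcing term a lower-order perturbation of the $\kappa_0^p/2^{pj}$ term coming from the cutoff, so it can be absorbed into the same $Y_j^{\,1+\delta}$ recursion. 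A secondary technical point is that Proposition \ref{prop001} measures the spatial Sobolev norm with $w_2$ while Lemma \ref{lem003} supplies the time-sup with $w_1$, and the iteration must be stated in a single measure throughout; handling this consistently—by using Lemma \ref{lem09} (and $\mathrm{(}\mathbf{K2}\mathrm{)}$, which forces $\vartheta\ge 0$) at the moment one passes from $\nu_{w_2}$-level sets back to $\nu_{w_1}$-level sets—is the part of the argument that requires the most care.
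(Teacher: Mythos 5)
Your overall architecture coincides with the paper's: the same shrinking forward cylinders with radii from $R$ down to $R/2$ and starting times climbing to $\widetilde{m}(2R)^{p+\vartheta}$, the same levels $k_j\nearrow\tilde\mu^{+}-\kappa_0/2^{j_\ast+1}$, the energy estimate of Lemma \ref{lem003} (adapted to $\tilde u$) combined with Proposition \ref{prop001}, the initial smallness supplied by Lemma \ref{lem005}, the choice of $j_\ast$ to undercut the iteration threshold, and the dichotomy $\omega>A_0R^{\varepsilon_0}$ (with $A_0$ depending on $j_\ast$) to absorb the rescaled forcing via \eqref{DQ01}--\eqref{DQ02}. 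However, there is a genuine gap in your measure bookkeeping. You set up the recursion for the $\nu_{w_1}$-ratios $Y_j$ and claim that ``the switch Lemma \ref{lem09} translates this back into the $\nu_{w_1}$-measure, yielding $Y_{j+1}\le C b^j Y_j^{1+\delta}$.'' Lemma \ref{lem09} only goes in one direction: from smallness of a $\nu_{w_1}$-ratio to smallness of the corresponding $\nu_{w_2}$-ratio, and with an exponent loss (the hypothesis requires the ratio $\le\varepsilon^{\beta}$ with $\beta\ge n-1+\theta_3+\min\{0,\theta_4\}$, while the conclusion only yields the power $n-1+\theta_3+\min\{0,\theta_4\}$). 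The converse pointwise bound $w_2\lesssim R^{-\vartheta}w_1$ fails near $\{|x'|=0\}$ since $\theta_3\le\theta_1$, so the dominant right-hand terms of the energy estimate (the $|\nabla\xi|^p$ term and the forcing term), which are $\nu_{w_2}$-measures of the level set, cannot be expressed as a clean power $Y_j^{1+\delta}$; if you instead invoke Lemma \ref{lem09} at every step, its exponent loss degrades $1+\delta$ to $\gamma(1+\delta)$ with $\gamma=\frac{n-1+\theta_3+\min\{0,\theta_4\}}{\beta}\le1$, which need not exceed $1$ (especially since the true exponent after absorbing the forcing is $\chi(l_0-1)/l_0$, barely above $1$ under \eqref{E01}), and the fast-geometric convergence lemma then may not apply.

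The paper resolves this by running the whole recursion in the $\nu_{w_2}$-measure: after rescaling time by $\widetilde m$ (setting $\hat u(x,\hat\tau)=\tilde u(x,\widetilde m\hat\tau)$ so that Proposition \ref{prop001} applies with matching homogeneity), the only $w_1$-term in the energy estimate (the one with $\partial_\tau\xi$) is converted pointwise using $w_1\le CR^{\vartheta}w_2$ on $B_{2R}$, which is the cheap direction under $\mathrm{(}\mathbf{K2}\mathrm{)}$ since $\vartheta\ge0$; one then obtains $F_{i+1}\le (C4^{pi})^{\chi}F_i^{\chi(l_0-1)/l_0}$ with $R$-independent constants, where $F_i$ are $\nu_{w_2}$-ratios and the superlinearity $\chi(l_0-1)/l_0>1$ is exactly \eqref{E01}. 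Lemma \ref{lem09} is used exactly once, at the initial step, to transfer the $\nu_{w_1}$-smallness of Lemma \ref{lem005} into smallness of $F_0$; its exponent loss is then harmless because it is absorbed into the choice \eqref{E98} of $j_\ast$. If you reorganize your iteration this way (recursion in $\nu_{w_2}$, single use of the switch at the start, and the trivial conversion $w_1\le CR^{\vartheta}w_2$ only for the final passage from the vanishing of $F_i$ to the pointwise bound), your argument closes and coincides with the paper's proof.
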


\begin{proof}
For $i=0,1,2,...,$ write
\begin{align*}
r_{i}=\frac{R}{2}+\frac{R}{2^{i+1}},\quad \tilde{r}_{i}=\big(2^{-i}+2^{p+\vartheta}(1-2^{-i})\big)R^{p+\vartheta},
\end{align*}
and
\begin{align*}
k_{i}=\tilde{\mu}^{+}-\frac{\kappa_{0}}{2^{j_{\ast}+1}}-\frac{\kappa_{0}}{2^{j_{\ast}+1+i}}.
\end{align*}
Denote $\widetilde{\mathcal{Q}}_{i}^{+}(\widetilde{m}):=B_{r_{i}}\times(\widetilde{m}\tilde{r}_{i},\widetilde{m}(3R)^{p+\vartheta}].$ Choose a cutoff function $\xi_{i}\in C^{\infty}(\widetilde{\mathcal{Q}}_{i}^{+}(m))$ such that
\begin{align*}
\begin{cases}
0\leq\xi_{i}\leq1,&\mathrm{in}\;\widetilde{\mathcal{Q}}_{i}^{+}(\widetilde{m}),\\
\xi_{i}=1,&\mathrm{in}\;\widetilde{\mathcal{Q}}_{i+1}^{+}(\widetilde{m}),\\
\xi_{i}=0,&\mathrm{on}\;\partial_{pa}\widetilde{\mathcal{Q}}_{i}^{+}(\widetilde{m}),\\
|\nabla\xi_{i}|\leq\frac{2^{i+3}}{R},\quad|\partial_{\tau}\xi_{i}|\leq\frac{2^{i+2}}{(2^{p+\vartheta}-1)\widetilde{m}R^{p+\vartheta}},
\end{cases}
\end{align*}
where $\partial_{pa}\widetilde{\mathcal{Q}}_{i}^{+}(\widetilde{m})$ denotes the parabolic boundary of $\widetilde{\mathcal{Q}}_{i}^{+}(\widetilde{m})$. In light of $\mathrm{(}\mathbf{K2}\mathrm{)}$, Lemma \ref{LEM860} and \eqref{DQ01}--\eqref{DQ02}, it follows from the proof of Lemma \ref{lem003} with a slight modification that
\begin{align}\label{WQ932}
&\sup\limits_{\tau\in(\widetilde{m}r_{i}^{p+\vartheta},\widetilde{m}(3R)^{p+\vartheta})}m\int_{B_{r_{i}}}(\tilde{u}-k_{i})_{+}^{p}\xi_{i}^{p}w_{1}dx+\frac{\lambda_{1}}{3}\int_{\widetilde{\mathcal{Q}}_{i}^{+}(\widetilde{m})}|\nabla((\tilde{u}-k_{i})_{+}\xi_{i})|^{p}w_{2}dxd\tau\notag\\
&\leq\sup\limits_{\tau\in(\widetilde{m}r_{i}^{p+\vartheta},\widetilde{m}(3R)^{p+\vartheta})}\int_{B_{r_{i}}}(\tilde{u}-k_{i})_{+}^{2}\xi_{i}^{p}w_{1}dx+\frac{\lambda_{1}}{3}\int_{\widetilde{\mathcal{Q}}_{i}^{+}(\widetilde{m})}|\nabla((\tilde{u}-k_{i})_{+}\xi_{i})|^{p}w_{2}dxd\tau\notag\\
&\leq C\int_{\widetilde{\mathcal{Q}}_{i}^{+}(\widetilde{m})}\big((\tilde{u}-k_{i})_{+}^{2}|\partial_{\tau}\xi_{i}||x'|^{\theta_{1}-\theta_{3}}|x|^{\theta_{2}-\theta_{4}}+(\tilde{u}-k_{i})_{+}^{p}|\nabla\xi_{i}|^{p}\big)w_{2}dxd\tau\notag\\
&\quad+C\|\tilde{\phi}\|_{L^{l_{0}}(\widetilde{\mathcal{Q}}_{i}^{+}(\widetilde{m}),w_{2})}|\widetilde{\mathcal{Q}}_{i}^{+}(\widetilde{m})\cap \{\tilde{u}>k_{i}\}|_{\nu_{w_{2}}}^{1-\frac{1}{l_{0}}}\notag\\
&\leq \frac{C2^{pi}}{R^{p}}\left(\frac{\kappa_{0}}{2^{j_{\ast}}}\right)^{p}|\widetilde{\mathcal{Q}}_{i}^{+}(\widetilde{m})\cap \{\tilde{u}>k_{i}\}|_{\nu_{w_{2}}}\notag\\
&\quad+C\mathcal{G}(\widetilde{m}(3R)^{p+\vartheta})\|\bar{\phi}\|_{L^{l_{0}}(\widetilde{\mathcal{Q}}_{i}^{+}(\widetilde{m}),w_{2})}|\widetilde{\mathcal{Q}}_{i}^{+}(\widetilde{m})\cap \{\tilde{u}>k_{i}\}|_{\nu_{w_{2}}}^{1-\frac{1}{l_{0}}}\notag\\
&\leq\frac{C2^{pi}}{R^{p}}\left(\frac{\kappa_{0}}{2^{j_{\ast}}}\right)^{p}|\widetilde{\mathcal{Q}}_{i}^{+}(\widetilde{m})\cap \{\tilde{u}>k_{i}\}|_{\nu_{w_{2}}}\notag\\
&\quad+\frac{\overline{\mathcal{C}}_{\ast}R^{\frac{(p+\vartheta)(p(l_{0}-1)+2)}{l_{0}(p-2)}}}{\omega^{\frac{p(l_{0}-1)+2}{l_{0}}}}|\widetilde{\mathcal{Q}}_{i}^{+}(\widetilde{m})\cap \{\tilde{u}>k_{i}\}|_{\nu_{w_{2}}}^{1-\frac{1}{l_{0}}},
\end{align}
where $C=C(\mathrm{data})$ and $\overline{\mathcal{C}}_{\ast}=\overline{\mathcal{C}}_{\ast}(j_{\ast},\mathrm{data}).$ Define $\hat{u}(x,\hat{\tau})=\tilde{u}(x,\widetilde{m}\hat{\tau})$, $\hat{\xi}_{i}(x,\hat{\tau})=\xi_{i}(x,\widetilde{m}\hat{\tau})$, and
\begin{align*}
\hat{A}_{i}(\hat{\tau})=B_{r_{i}}\cap\{\hat{u}(\cdot,\hat{\tau})>k_{i}\},\quad\hat{A}_{i}=\widetilde{\mathcal{Q}}_{i}^{+}(1)\cap\{\hat{u}>k_{i}\}.
\end{align*}
Combining Proposition \ref{prop001} and \eqref{WQ932}, we obtain
\begin{align*}
&2^{-p(i+2)}\left(\frac{\kappa_{0}}{2^{j_{\ast}}}\right)^{p}|\hat{A}_{i+1}|_{\nu_{w_{2}}}^{\frac{1}{\chi}}=(k_{i+1}-k_{i})^{p}|\hat{A}_{i+1}|_{\nu_{w_{2}}}^{\frac{1}{\chi}}\notag\\
&\leq\|(\hat{u}_{i}-k_{i})_{+}\hat{\xi}_{i}\|^{p}_{L^{p\chi}(\widetilde{\mathcal{Q}}_{i}^{+}(1),w_{2})}\leq C\|(\hat{u}_{i}-k_{i})_{+}\hat{\xi}_{i}\|^{p}_{V^{p}_{0}(\widetilde{\mathcal{Q}}_{i}^{+}(1),w_{1},w_{2})}\notag\\
&\leq\frac{C2^{pi}}{R^{p}}\left(\frac{\kappa_{0}}{2^{j_{\ast}}}\right)^{p}|\hat{A}_{i}|_{\nu_{w_{2}}}+\frac{\widetilde{\mathcal{C}}_{\ast}R^{\frac{p(p+\vartheta)}{p-2}}}{\omega^{\frac{p(l_{0}-1)+2}{l_{0}}}}|\hat{A}_{i}|_{\nu_{w_{2}}}^{1-\frac{1}{l_{0}}},
\end{align*}
where $\chi=\frac{n+p+\theta_{1}+\theta_{2}}{n+\theta_{1}+\theta_{2}}$ and $\widetilde{\mathcal{C}}_{\ast}=\widetilde{\mathcal{C}}_{\ast}(j_{\ast},\mathrm{data})$. Consequently, if $\omega>\overline{\mathcal{N}}_{\ast}(j_{\ast},\mathrm{data})R^{\varepsilon_{0}}$, we have
\begin{align*}
|\hat{A}_{i+1}|_{\nu_{w_{2}}}\leq&\bigg(\frac{C4^{pi}}{R^{p}}|\hat{A}_{i}|_{\nu_{w_{2}}}+\Big(\frac{\overline{\mathcal{N}}(j_{\ast},\mathrm{data})}{\omega}\Big)^{\frac{p(l_{0}-1)+2}{l_{0}}}|\hat{A}_{i}|_{\nu_{w_{2}}}^{1-\frac{1}{l_{0}}}\bigg)^{\chi}\notag\\
\leq&\bigg[\bigg(\frac{C4^{pi}}{R^{\frac{p(l_{0}-1)-n-\theta_{1}-\theta_{2}}{l_{0}}}}+\Big(\frac{\overline{\mathcal{N}}(j_{\ast},\mathrm{data})}{\omega}\Big)^{\frac{p(l_{0}-1)+2}{l_{0}}}\bigg)|\hat{A}_{i}|_{\nu_{w_{2}}}^{1-\frac{1}{l_{0}}}\bigg]^{\chi}\notag\\
\leq&\bigg(\frac{C4^{pi}}{R^{\frac{p(l_{0}-1)-n-\theta_{1}-\theta_{2}}{l_{0}}}}|\hat{A}_{i}|_{\nu_{w_{2}}}^{1-\frac{1}{l_{0}}}\bigg)^{\chi}.
\end{align*}
Denote
\begin{align*}
F_{i}:=\frac{|\hat{A}_{i}|_{\nu_{w_{2}}}}{|B_{R}\times(R^{p+\vartheta},(3R)^{p+\vartheta}]|_{\nu_{w_{2}}}}.
\end{align*}
Therefore, we obtain
\begin{align*}
F_{i+1}\leq&(C4^{pi})^{\chi} F_{i}^{\frac{\chi(l_{0}-1)}{l_{0}}}\leq\prod\limits^{i}_{s=0}\big[(C4^{p(i-s)})^{\chi}\big]^{\big(\frac{\chi(l_{0}-1)}{l_{0}}\big)^{s}}F_{0}^{\left(\frac{\chi(l_{0}-1)}{l_{0}}\right)^{i+1}}\notag\\
\leq&(\widetilde{C}_{0}F_{0})^{\left(\frac{\chi(l_{0}-1)}{l_{0}}\right)^{i+1}},
\end{align*}
where $\widetilde{C}_{0}=\widetilde{C}_{0}(\mathrm{data})$. Fix the value of $j_{\ast}$ such that
\begin{align}\label{E98}
\frac{\overline{C}_{0}}{j_{\ast}^{\frac{p-p_{\ast}}{pp_{\ast}}}}\leq(2C_{0}\widetilde{C}_{0})^{-\frac{\beta}{n-1+\theta_{3}+\min\{0,\theta_{4}\}}},
\end{align}
where $\beta$ and $C_{0}$ are determined by applying Lemma \ref{lem09} to the domain $B_{R}\times(R^{p+\vartheta},(3R)^{p+\vartheta}]$, $\overline{C}_{0}$ is given by Lemma \ref{lem005}. Hence, it follows that
\begin{align*}
F_{i+1}\leq2^{-\left(\frac{\chi(l_{0}-1)}{l_{0}}\right)^{i+1}}\rightarrow0,\quad\mathrm{as}\;i\rightarrow\infty.
\end{align*}
The proof is then finished by letting $A_{0}:=A_{0}(\mathrm{data})=\overline{\mathcal{N}}_{\ast}(j_{\ast},\mathrm{data})$.

\end{proof}

Based on Lemma \ref{LEM090}, we are now ready to complete the proof of Proposition \ref{PRO01} by rescaling back to $u$.
\begin{proof}[Proof of Proposition \ref{PRO01}]
Take the proof of \eqref{A01} for example. Let
$$t=\bar{t}+e^{\tau}\omega^{2-p}\delta_{0}R^{p+\vartheta}.$$
When $\tau\in(\widetilde{m}(2R)^{p+\vartheta},\widetilde{m}(3R)^{p+\vartheta}]$, we have
\begin{align*}
\bar{t}+\delta_{0}e^{\delta_{0}^{-1}2^{p+\vartheta+(p-2)(j_{\ast}+c_{0})}}\omega^{2-p}R^{p+\vartheta}<t\leq\bar{t}+\delta_{0}e^{\delta_{0}^{-1}3^{p+\vartheta}2^{(p-2)(j_{\ast}+c_{0})}}\omega^{2-p}R^{p+\vartheta},
\end{align*}
where $c_{0},\delta_{0},j_{\ast}$ are, respectively, given by Lemma \ref{LEM0035} and \eqref{E98}. Pick
\begin{align}\label{ME09}
\bar{t}=-c_{\ast}\omega^{2-p}R^{p+\vartheta},\quad c_{\ast}=\delta_{0}e^{\delta_{0}^{-1}3^{p+\vartheta}2^{(p-2)(j_{\ast}+c_{0})}}.
\end{align}
Remark that the value of $\bar{t}$ chosen in \eqref{ME09} satisfies the requirement in \eqref{Q03}, that is,
\begin{align*}
\bar{t}\in[-1,-\delta_{0}(\sigma\omega)^{2-p}R^{p+\vartheta}],\quad\text{as }  \tau\in(\widetilde{m}(2R)^{p+\vartheta},\widetilde{m}(3R)^{p+\vartheta}]\text{ and }\omega>\sigma^{-1}R^{\varepsilon_{0}},
\end{align*}
where $\sigma=e^{-\frac{\tau}{p-2}}$. Choose
\begin{align}\label{M900}
M=(b_{\ast}2^{p+\vartheta})^{\frac{1}{p-2}},\quad\text{and thus }m=\Big(\frac{\omega}{M}\Big)^{2-p}=2^{p+\vartheta}b_{\ast}\omega^{2-p},
\end{align}
where
\begin{align*}
b_{\ast}=\delta_{0}e^{\delta_{0}^{-1}3^{p+\vartheta}2^{(p-2)(j_{\ast}+c_{0})}}-\delta_{0}e^{\delta_{0}^{-1}2^{p+\vartheta+(p-2)(j_{\ast}+c_{0})}}.
\end{align*}
Then we have $t\in(-m(R/2)^{p+\vartheta},0]$, as $\tau\in(\widetilde{m}(2R)^{p+\vartheta},\widetilde{m}(3R)^{p+\vartheta}]$. From \eqref{EXP01} and Lemma \ref{LEM090}, we have either $\omega\leq AR^{\varepsilon_{0}}:=\max\{A_{0},c_{\ast}^{\frac{1}{p-2}}\}R^{\varepsilon_{0}}$, or
\begin{align*}
u(x,t)\leq\mu^{+}-\frac{\omega}{2^{c_{0}+j_{\ast}+1}e^{\frac{\widetilde{m}(3R)^{p+\vartheta}}{p-2}}}=:\mu^{+}-\frac{\omega}{2^{\kappa_{\ast}}},
\end{align*}
for any $(x,t)\in Q(R/2,m(R/2)^{p+\vartheta})$, where
\begin{align*}
\kappa_{\ast}=c_{0}+j_{\ast}+1+\frac{3^{p+\vartheta}2^{(c_{0}+j_{\ast})(p-2)}}{\delta_{0}\ln2}.
\end{align*}
Therefore, \eqref{A01} is proved. By the same argument, we obtain that \eqref{A02} also holds.

\end{proof}

\subsection{The proofs of Theorems \ref{ZWTHM90} and \ref{THM060}.}
First, a direct application of Proposition \ref{PRO01} gives the following result.
\begin{lemma}\label{LEMD05}
Assume as in Theorems \ref{ZWTHM90} and \ref{THM060}. Then we have either $\omega\leq AR^{\varepsilon_{0}}$, or
\begin{align*}
\mathop{osc}\limits_{Q(R/2,m(R/2)^{p+\vartheta})}u\leq\eta^{\ast}\omega,\quad\eta^{\ast}=1-2^{-\kappa_{\ast}},
\end{align*}
where $A,m,\kappa_{\ast}$ are determined by Proposition \ref{PRO01}.

\end{lemma}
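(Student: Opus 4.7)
The plan is to run a straightforward dichotomy on the measure of a level set at a single time slice and then invoke the two alternatives of Proposition \ref{PRO01}. Since $\mu^{+}-\omega=\mu^{-}$, the critical level satisfies
\[
\mu^{+}-\tfrac12\omega \;=\; \mu^{-}+\tfrac12\omega,
\]
so for the fixed time $\bar t=-c_\ast\omega^{2-p}R^{p+\vartheta}$ the two sets $\{u(\cdot,\bar t)>\mu^{+}-\omega/2\}$ and $\{u(\cdot,\bar t)<\mu^{-}+\omega/2\}$ are (modulo null sets) complementary inside $B_R$. Hence at least one of the following must hold:
\[
\bigl|B_R\cap\{u(\cdot,\bar t)>\mu^{+}-\tfrac12\omega\}\bigr|_{\mu_{w_1}}\le \tfrac12|B_R|_{\mu_{w_1}},
\]
or
\[
\bigl|B_R\cap\{u(\cdot,\bar t)<\mu^{-}+\tfrac12\omega\}\bigr|_{\mu_{w_1}}\le \tfrac12|B_R|_{\mu_{w_1}}.
\]

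First I would dispose of the trivial alternative: if $\omega\le AR^{\varepsilon_0}$ there is nothing to prove. Otherwise, the remark following Proposition \ref{PRO01} guarantees that $c_\ast\omega^{2-p}R^{p+\vartheta}<1$, so $\bar t\in(-1,0)$ and the hypotheses \eqref{A90}/\eqref{A91} of Proposition \ref{PRO01} are meaningful. In the first case above, apply part $(i)$ of Proposition \ref{PRO01} to conclude
\[
u(x,t)\le \mu^{+}-2^{-\kappa_\ast}\omega\qquad\text{on }Q(R/2,m(R/2)^{p+\vartheta}),
\]
which, combined with the lower bound $u\ge\mu^{-}$ inherited from the larger cylinder, gives
\[
\mathop{osc}\limits_{Q(R/2,m(R/2)^{p+\vartheta})}u\;\le\;(\mu^{+}-2^{-\kappa_\ast}\omega)-\mu^{-}\;=\;(1-2^{-\kappa_\ast})\omega\;=\;\eta^\ast\omega.
\]
In the second case, part $(ii)$ of Proposition \ref{PRO01} yields $u\ge\mu^{-}+2^{-\kappa_\ast}\omega$ on the same cylinder, and the upper bound $u\le\mu^{+}$ then produces the identical oscillation estimate.

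There is no real obstacle here; the proof is a one-line deduction from Proposition \ref{PRO01}. The only minor point is to verify that $\bar t=-c_\ast\omega^{2-p}R^{p+\vartheta}$ lies in the admissible range where the proposition can be invoked — which is precisely the content of the remark immediately following Proposition \ref{PRO01}, and which uses the exponent conditions $\frac{p+\vartheta}{p-2}>\varepsilon_0$ together with $A\ge c_\ast^{1/(p-2)}$ baked into the definition of $A$ during the proof of Proposition \ref{PRO01}.
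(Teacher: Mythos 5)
Your proposal is correct and follows essentially the same route as the paper's own proof: a measure dichotomy at the single time slice $\bar t=-c_\ast\omega^{2-p}R^{p+\vartheta}$ (which must satisfy \eqref{A90} or \eqref{A91} since the two level sets are disjoint in $B_R$), followed by an application of the two alternatives of Proposition \ref{PRO01} and the trivial bounds $u\ge\mu^{-}$, $u\le\mu^{+}$. One pedantic remark: the sets $\{u(\cdot,\bar t)>\mu^{+}-\tfrac12\omega\}$ and $\{u(\cdot,\bar t)<\mu^{-}+\tfrac12\omega\}$ need not be complementary modulo null sets, since the level set $\{u=\mu^{+}-\tfrac12\omega\}$ may carry positive measure, but their disjointness alone already forces at least one of them to have $\mu_{w_1}$-measure at most $\tfrac12|B_R|_{\mu_{w_1}}$, so your argument is unaffected.
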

\begin{proof}
Note that one of the following two inequalities must hold:
\begin{align}\label{WAMQ001}
|B_{R}\cap\{u(\cdot,-c_{\ast}\omega^{2-p}R^{p+\vartheta})>\mu^{+}-2^{-1}\omega\}|_{\mu_{w_{1}}}\leq2^{-1}|B_{R}|_{\mu_{w_{1}}},
\end{align}
and
\begin{align}\label{WAMQ002}
|B_{R}\cap\{u(\cdot,-c_{\ast}\omega^{2-p}R^{p+\vartheta})<\mu^{-}+2^{-1}\omega\}|_{\mu_{w_{1}}}\leq2^{-1}|B_{R}|_{\mu_{w_{1}}}.
\end{align}
From Proposition \ref{PRO01}, it follows that if $\omega>AR^{\varepsilon_{0}}$,
\begin{align*}
\sup\limits_{Q(R/2,m(R/2)^{p+\vartheta})}u\leq\mu^{+}-2^{-\kappa_{\ast}}\omega,\quad\text{if \eqref{WAMQ001} holds,}
\end{align*}
and
\begin{align*}
\inf\limits_{Q(R/2,m(R/2)^{p+\vartheta})}u\geq\mu^{-}+2^{-\kappa_{\ast}}\omega,\quad\text{if \eqref{WAMQ002} holds.}
\end{align*}
In either case, we all obtain
\begin{align*}
\mathop{osc}\limits_{Q(R/2,m(R/2)^{p+\vartheta})}u\leq(1-2^{-k_{\ast}})\omega.
\end{align*}
The proof is finished.

\end{proof}

We proceed to use Lemma \ref{LEMD05} to construct a series of nested and shrinking cylinders with the same vertex such that the essential oscillation of $u$ in these cylinders goes to zero as the radius of the cylinder tends to zero. Denote
\begin{align}\label{K99}
\omega_{0}:=\max\{\omega,AR^{\varepsilon_{0}}\},
\end{align}
and, for $k\geq0$,
\begin{align}\label{K98}
R_{k}:=A^{-k}R,\quad \omega_{k+1}:=\max\{\eta^{\ast}\omega_{k},AR_{k}^{\varepsilon_{0}}\},\quad \tilde{a}_{k}:=\left(\frac{\omega_{k}}{A}\right)^{2-p}.
\end{align}
Since $A$ is a large constant, we have
\begin{align*}
\tilde{a}_{k+1}R_{k+1}^{p+\vartheta}=&\left(\frac{\omega_{k+1}}{A}\right)^{2-p}\frac{R_{k}^{p+\vartheta}}{A^{p+\vartheta}}\leq\frac{\tilde{a}_{k}R_{k}^{p+\vartheta}}{(\eta^{\ast})^{p-2}A^{p+\vartheta}}< \tilde{a}_{k}R_{k}^{p+\vartheta},
\end{align*}
which implies that $Q(R_{k+1},\tilde{a}_{k+1}R_{k+1}^{p+\vartheta})\subset Q(R_{k},\tilde{a}_{k}R_{k}^{p+\vartheta})$. Remark that $\tilde{a}_{0}\leq a_{0}$.
\begin{lemma}\label{LEM83}
Assume as in Theorems \ref{ZWTHM90} and \ref{THM060}. Then for any $k=0,1,2,...,$
\begin{align}\label{OSC98}
\mathop{osc}\limits_{Q(R_{k},\tilde{a}_{k}R_{k}^{p+\vartheta})}u\leq\omega_{k}.
\end{align}

\end{lemma}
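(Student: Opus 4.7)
The natural plan is induction on $k$, with the inductive step driven by Lemma \ref{LEMD05} applied at the scale $R_k$, together with the dichotomy given by the threshold $AR_k^{\varepsilon_{0}}$.

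\emph{Base case $k=0$.} Recall $\omega_0 = \max\{\omega, AR^{\varepsilon_{0}}\}$. If $\omega > AR^{\varepsilon_{0}}$, then $\omega_0 = \omega$ and $\tilde{a}_0 = a_0$, so \eqref{AQ821} gives the bound immediately. If instead $\omega \leq AR^{\varepsilon_{0}}$, then $\omega_0 = AR^{\varepsilon_{0}}$ and a direct computation yields $\tilde{a}_0 R^{p+\vartheta} = R^{p+\vartheta-\bar{\varepsilon}_{0}}$, so $Q(R, \tilde{a}_0 R^{p+\vartheta}) \subset Q(2R, R^{p+\vartheta-\bar{\varepsilon}_{0}})$ and the oscillation is bounded by $\omega \leq \omega_0$.

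\emph{Inductive step.} Assume \eqref{OSC98} holds at level $k$. If $\omega_k \leq AR_k^{\varepsilon_{0}}$, then $\omega_{k+1} = AR_k^{\varepsilon_{0}} \geq \omega_k$, and the nesting $Q(R_{k+1}, \tilde{a}_{k+1} R_{k+1}^{p+\vartheta}) \subset Q(R_k, \tilde{a}_k R_k^{p+\vartheta})$ established just before the lemma statement immediately yields the claim by monotonicity of oscillation. If instead $\omega_k > AR_k^{\varepsilon_{0}}$, I translate the vertex to $(0,0)$ and apply Lemma \ref{LEMD05} at scale $R_k$; the hypothesis $\omega_k > AR_k^{\varepsilon_{0}}$ rules out its first alternative, leaving
\[
\mathop{osc}\limits_{Q(R_k/2,\,m_k(R_k/2)^{p+\vartheta})} u \leq \eta^{\ast}\omega_k, \qquad m_k = (\omega_k/M)^{2-p}.
\]
Combined with $\omega_{k+1} \geq \eta^{\ast}\omega_k$, it then suffices to verify the inclusion $Q(R_{k+1}, \tilde{a}_{k+1} R_{k+1}^{p+\vartheta}) \subset Q(R_k/2, m_k(R_k/2)^{p+\vartheta})$.

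The main obstacle is precisely this last inclusion. The spatial part needs only $A \geq 2$, while the temporal part reduces to bounding
\[
\frac{\tilde{a}_{k+1} R_{k+1}^{p+\vartheta}}{m_k (R_k/2)^{p+\vartheta}} = \left(\frac{\omega_{k+1}}{\omega_k}\right)^{2-p} \frac{2^{p+\vartheta}}{M^{p-2} A^{2+\vartheta}} \leq \frac{(\eta^{\ast})^{2-p}\, 2^{p+\vartheta}}{M^{p-2} A^{2+\vartheta}},
\]
where the last step uses $\omega_{k+1}/\omega_k \geq \eta^{\ast}$ together with $p>2$. This ratio is at most $1$ provided $A$ is taken sufficiently large relative to the universal constants $M$, $\eta^{\ast}$, $p$, $\vartheta$. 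Enlarging $A$ beyond the value $\max\{A_0, c_{\ast}^{1/(p-2)}\}$ fixed in Proposition \ref{PRO01}, if necessary, is harmless since every earlier conclusion only requires $A$ to exceed certain universal thresholds.
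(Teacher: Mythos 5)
Your proof is correct and follows essentially the same route as the paper: induction on $k$, with the base case handled by the inclusion $Q(R,\tilde{a}_{0}R^{p+\vartheta})\subset Q(2R,R^{p+\vartheta-\bar{\varepsilon}_{0}})$, the inductive step by applying Lemma \ref{LEMD05} at scale $R_{k}$ (using the induction hypothesis as the oscillation input), and the containment $Q(R_{k+1},\tilde{a}_{k+1}R_{k+1}^{p+\vartheta})\subset Q(R_{k}/2,m_{k}(R_{k}/2)^{p+\vartheta})$ verified by the same ratio computation with $\omega_{k+1}\geq\eta^{\ast}\omega_{k}$ and $A$ large. Your explicit dichotomy on $\omega_{k}\lessgtr AR_{k}^{\varepsilon_{0}}$ and the remark that enlarging $A$ is harmless merely make explicit what the paper leaves implicit ("Since $A$ is a large constant").
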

\begin{proof}
Observe first that \eqref{OSC98} holds obviously for $k=0$ in virtue of $\tilde{a}_{0}\leq a_{0}$. We now suppose that \eqref{OSC98} holds in the case of $k=i$ for any given $i\geq1$. Then we prove that it also holds for $k=i+1$. Since $\mathop{osc}\limits_{Q(R_{i},\tilde{a}_{i}R_{i}^{p+\vartheta})}u\leq\omega_{i}$, it then follows from the proof of Lemma \ref{LEMD05} with minor modification that
\begin{align}\label{AEM81}
\mathop{osc}\limits_{Q(R_{i}/2,m_{i}(R_{i}/2)^{p+\vartheta})}u\leq\max\{\eta^{\ast}\omega_{i},AR_{i}^{\varepsilon_{0}}\}=\omega_{i+1},\quad m_{i}:=\left(\frac{M}{\omega_{i}}\right)^{p-2},
\end{align}
where $M$ is given by \eqref{M900}. Due to the fact that $\omega_{i+1}\geq\eta^{\ast}\omega_{i}$ and $A$ is a large constant, we obtain
\begin{align*}
&m_{i}\left(\frac{R_{i}}{2}\right)^{p+\vartheta}=\left(\frac{M}{\omega_{i}}\right)^{p-2}\left(\frac{R_{i}}{2}\right)^{p+\vartheta}\geq \left(\frac{A}{\omega_{i+1}}\right)^{p-2}\left(\frac{\eta^{\ast}M}{A}\right)^{p-2}\left(\frac{R_{i}}{2}\right)^{p+\vartheta}\notag\\
&=\tilde{a}_{i+1}R_{i+1}^{p+\vartheta}\frac{(\eta^{\ast}M)^{p-2}A^{2+\vartheta}}{2^{p+\vartheta}}\geq \tilde{a}_{i+1}R_{i+1}^{p+\vartheta},
\end{align*}
which, in combination with \eqref{AEM81}, shows that
\begin{align*}
\mathop{osc}\limits_{Q(R_{i+1},\tilde{a}_{i+1}R_{i+1}^{p+\vartheta})}u\leq\omega_{i+1}.
\end{align*}
The proof is finished.

\end{proof}

Based on the result in Lemma \ref{LEM83}, we now give a more precise characterization for the oscillation decay property of the solution $u$.
\begin{prop}\label{PRO90}
Suppose as in Theorems \ref{ZWTHM90} and \ref{THM060}. Then for any $0<\rho\leq R\leq\frac{1}{2}$,
\begin{align*}
\mathop{osc}\limits_{Q(\rho.\tilde{a}_{0}\rho^{p+\vartheta})}u\leq \max\big\{(\eta^{\ast})^{-1}\omega_{0},A^{1+2\varepsilon_{\ast}}R^{\varepsilon_{\ast}}\big\}\left(\frac{\rho}{R}\right)^{\varepsilon_{\ast}},
\end{align*}
where $\eta^{\ast}$ is given in Lemma \ref{LEMD05}, $\omega_{0}$ and $\tilde{a}_{0}$ are, respectively, defined by \eqref{K99}--\eqref{K98}, and
\begin{align}\label{Ast01}
\varepsilon_{\ast}:=\min\Big\{\varepsilon_{0},-\frac{\ln\eta^{\ast}}{\ln A}\Big\}.
\end{align}

\end{prop}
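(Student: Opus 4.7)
The plan is to iterate the one-step oscillation decay from Lemma \ref{LEM83} along the geometric sequence $R_k = A^{-k}R$, produce a geometric upper bound on $\omega_k$, and then interpolate to an arbitrary radius $\rho\in(0,R]$ by sandwiching it between consecutive $R_k$'s.

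First, I will bound the sequence $\omega_k$. The crucial arithmetic fact behind the definition \eqref{Ast01} is that $\eta^{\ast}\leq A^{-\varepsilon_{\ast}}$ (equivalent to $\varepsilon_{\ast}\leq -\ln\eta^{\ast}/\ln A$), and that $R_k\leq R\leq 1/2$ together with $\varepsilon_{\ast}\leq\varepsilon_0$ gives $R_k^{\varepsilon_0}\leq R_k^{\varepsilon_{\ast}}$. Setting $C:=\max\{\omega_0,\,A^{1+\varepsilon_{\ast}}R^{\varepsilon_{\ast}}\}$, I claim
\[
\omega_k\leq C\left(\frac{R_k}{R}\right)^{\varepsilon_{\ast}}\qquad\text{for every }k\geq 0.
\]
The base case is trivial. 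For the inductive step, using $\omega_{k+1}=\max\{\eta^{\ast}\omega_k,\,AR_k^{\varepsilon_0}\}$, the first candidate is bounded by $\eta^{\ast} C(R_k/R)^{\varepsilon_{\ast}}\leq A^{-\varepsilon_{\ast}}C(R_k/R)^{\varepsilon_{\ast}}=C(R_{k+1}/R)^{\varepsilon_{\ast}}$, while the second satisfies $AR_k^{\varepsilon_0}\leq AR_k^{\varepsilon_{\ast}}=A^{1+\varepsilon_{\ast}}R^{\varepsilon_{\ast}}(R_{k+1}/R)^{\varepsilon_{\ast}}\leq C(R_{k+1}/R)^{\varepsilon_{\ast}}$.

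Second, I need to verify that the cylinders $Q(\rho,\tilde{a}_0\rho^{p+\vartheta})$ sit inside the iterated cylinders $Q(R_k,\tilde{a}_kR_k^{p+\vartheta})$. Observe first that $\omega_k\leq\omega_0$ for all $k$: by induction, since $\eta^{\ast}\omega_k\leq\omega_k\leq\omega_0$ and $AR_k^{\varepsilon_0}\leq AR^{\varepsilon_0}\leq\omega_0$ by \eqref{K99}. Because $p>2$, the map $s\mapsto(s/A)^{2-p}$ is decreasing, so $\omega_k\leq\omega_0$ yields $\tilde{a}_k\geq\tilde{a}_0$, and hence $\tilde{a}_0\rho^{p+\vartheta}\leq\tilde{a}_k R_k^{p+\vartheta}$ whenever $\rho\leq R_k$, giving the desired containment.

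Third, for any $\rho\in(0,R]$, pick the unique $k\geq 0$ with $R_{k+1}<\rho\leq R_k$, so $R_k/R<A\rho/R$. Then by Lemma \ref{LEM83}, the containment just established, and the geometric bound on $\omega_k$,
\[
\mathop{osc}\limits_{Q(\rho,\tilde{a}_0\rho^{p+\vartheta})}u\leq\mathop{osc}\limits_{Q(R_k,\tilde{a}_kR_k^{p+\vartheta})}u\leq\omega_k\leq C\left(\frac{R_k}{R}\right)^{\varepsilon_{\ast}}\leq CA^{\varepsilon_{\ast}}\left(\frac{\rho}{R}\right)^{\varepsilon_{\ast}}.
\]
Since $A^{\varepsilon_{\ast}}\leq(\eta^{\ast})^{-1}$ by the definition of $\varepsilon_{\ast}$, the factor $CA^{\varepsilon_{\ast}}$ is dominated by $\max\{(\eta^{\ast})^{-1}\omega_0,\,A^{1+2\varepsilon_{\ast}}R^{\varepsilon_{\ast}}\}$, which yields the stated bound.

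The only genuinely delicate point is the interplay between the two competing ingredients in the definition of $\omega_{k+1}$: the multiplicative decay by $\eta^{\ast}$ and the floor $AR_k^{\varepsilon_0}$. The choice $\varepsilon_{\ast}=\min\{\varepsilon_0,-\ln\eta^{\ast}/\ln A\}$ is precisely what makes both terms compatible with a single geometric rate $A^{-\varepsilon_{\ast}}$, and all the rest is a bookkeeping argument. No new analytic tools beyond Lemma \ref{LEM83} are required.
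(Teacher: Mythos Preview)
Your proof is correct and follows essentially the same approach as the paper: iterate Lemma \ref{LEM83} along the geometric scales $R_k=A^{-k}R$, bound $\omega_k$ using $\eta^{\ast}\le A^{-\varepsilon_{\ast}}$ and $R_k^{\varepsilon_0}\le R_k^{\varepsilon_{\ast}}$, prove $\omega_k\le\omega_0$ to obtain $\tilde a_k\ge\tilde a_0$ and hence the cylinder inclusion, then sandwich $\rho$ between $R_{k+1}$ and $R_k$. Your presentation is in fact a bit cleaner than the paper's, since you prove $\omega_k\le C(R_k/R)^{\varepsilon_{\ast}}$ directly by induction with $C=\max\{\omega_0,A^{1+\varepsilon_{\ast}}R^{\varepsilon_{\ast}}\}$, whereas the paper unrolls the recursion and bounds each term separately; but the content is the same.
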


\begin{remark}\label{W98}
Observe that the value of $\varepsilon_{0}$ tends to zero, as $\theta_{1}+\theta_{2}\nearrow p(l_{0}-1)-n$ or $l_{0}\searrow\frac{n+p+\theta_{1}+\theta_{2}}{p}$. Then if $\varepsilon_{0}\leq-\frac{\ln\eta^{\ast}}{\ln A}$, $\varepsilon_{\ast}$ becomes the explicit exponent $\varepsilon_{0}$, whose value is clearly determined by the structure of the considered equation and the weights. By contrast, if $\varepsilon_{0}>-\frac{\ln\eta^{\ast}}{\ln A}$, the effects of the weights on the regularity of the solution will be concealed beneath the inexplicit constant $-\frac{\ln\eta^{\ast}}{\ln A}$.

\end{remark}

\begin{remark}\label{REM10}
Recall that for any $R\in(0,\frac{1}{2}]$ and $t_{0}\in[-\frac{1}{2},0]$, if $\omega>AR^{\varepsilon_{0}}$, we have
\begin{align*}
[(0,t_{0})+Q(R,a_{0}R^{p+\vartheta})]\subset[(0,t_{0})+Q(2R,R^{p+\vartheta-\bar{\varepsilon}_{0}})]\subset B_{1}\times(-1,0].
\end{align*}
Therefore, by a translation, it follows from the proof of Proposition \ref{PRO90} with a slight modification that for any $0<\rho\leq R\leq\frac{1}{2}$, there holds either $\omega\leq AR^{\varepsilon_{0}},$ or
\begin{align*}
\mathop{osc}\limits_{[(0,t_{0})+Q(\rho,\tilde{a}_{0}\rho^{p+\vartheta})]}u\leq \max\big\{(\eta^{\ast})^{-1}\omega_{0},A^{1+2\varepsilon_{\ast}}R^{\varepsilon_{\ast}}\big\}\left(\frac{\rho}{R}\right)^{\varepsilon_{\ast}}.
\end{align*}
\end{remark}

\begin{proof}[Proof of Proposition \ref{PRO90}]
For any $0<\rho\leq R\leq\frac{1}{2}$, there exists an integer $k\geq0$ such that $R_{k+1}=A^{-(k+1)}R\leq\rho\leq A^{-k}R=R_{k}$. In light of \eqref{Ast01}, we have $A^{\varepsilon_{\ast}}\eta^{\ast}\leq1$. This, together with \eqref{OSC98}, shows that the conclusion obviously holds for $k=0$. So in the following it suffices to consider the case when $k\geq1$. Utilizing the fact of $A^{\varepsilon_{\ast}}\eta^{\ast}\leq1$ again, we deduce
\begin{align}\label{D31}
\omega_{k}\leq&\max\{\eta^{\ast}\omega_{k-1},AR_{k-1}^{\varepsilon_{\ast}}\}\leq\max\big\{(\eta^{\ast})^{k}\omega_{0},\max\limits_{0\leq i\leq k-1}A(\eta^{\ast})^{i}R_{k-1-i}^{\varepsilon_{\ast}}\big\}\notag\\
\leq&\max\big\{(\eta^{\ast})^{k}\omega_{0},\max\limits_{0\leq i\leq k-1}R^{\varepsilon_{\ast}}(\eta^{\ast})^{i}A^{1-(k-1-i)\varepsilon_{\ast}}\big\}\notag\\
\leq&\max\bigg\{(\eta^{\ast})^{k}\omega_{0},A\left(\frac{R}{A^{k-1}}\right)^{\varepsilon_{\ast}}\bigg\}.
\end{align}
Since $\varepsilon_{\ast}\leq-\frac{\ln\eta^{\ast}}{\ln A}$ and $R_{k+1}\leq\rho$, then
\begin{align*}
(\eta^{\ast})^{k}\leq(\eta^{\ast})^{-1}A^{-(k+1)\varepsilon_{\ast}}\leq(\eta^{\ast})^{-1}\left(\frac{\rho}{R}\right)^{\varepsilon_{\ast}},\quad A\left(\frac{R}{A^{k-1}}\right)^{\varepsilon_{\ast}}\leq&A^{1+2\varepsilon_{\ast}}\rho^{\varepsilon_{\ast}}.
\end{align*}
Inserting this into \eqref{D31}, we have
\begin{align*}
\omega_{k}\leq\max\big\{(\eta^{\ast})^{-1}\omega_{0},A^{1+2\varepsilon_{\ast}}R^{\varepsilon_{\ast}}\big\}\left(\frac{\rho}{R}\right)^{\varepsilon_{\ast}}.
\end{align*}
Since
\begin{align*}
\omega_{k}=&\max\{\eta^{\ast}\omega_{k-1},AR_{k-1}^{\varepsilon_{0}}\}=\max\big\{(\eta^{\ast})^{k}\omega_{0},\max\limits_{0\leq i\leq k-1}A(\eta^{\ast})^{i}R_{k-1-i}^{\varepsilon_{0}}\big\}\notag\\
=&\max\big\{(\eta^{\ast})^{k}\omega_{0},\max\limits_{0\leq i\leq k-1}R^{\varepsilon_{0}}(\eta^{\ast})^{i}A^{1-(k-1-i)\varepsilon_{0}}\big\}\leq\omega_{0},
\end{align*}
then we have $\tilde{a}_{0}\leq \tilde{a}_{k}$. This implies that
\begin{align*}
\mathop{osc}\limits_{Q(\rho.\tilde{a}_{0}\rho^{p+\vartheta})}u\leq\mathop{osc}\limits_{Q(R_{k},\tilde{a}_{k}R_{k}^{p+\vartheta})}u\leq \max\big\{(\eta^{\ast})^{-1}\omega_{0},A^{1+2\varepsilon_{\ast}}R^{\varepsilon_{\ast}}\big\}\left(\frac{\rho}{R}\right)^{\varepsilon_{\ast}}.
\end{align*}
The proof is complete.

\end{proof}

In the following we make use of Proposition \ref{PRO90} to complete the proofs of Theorems \ref{ZWTHM90} and \ref{THM060}, respectively.
\begin{proof}[Proof of Theorem \ref{ZWTHM90}]
Denote $\hat{a}_{0}:=2^{\varepsilon_{0}(p-2)}$. Then we obtain that for any $0<R\leq\frac{1}{2}$,
\begin{align*}
\hat{a}_{0}=2^{\varepsilon_{0}(p-2)}=\left(\frac{A}{\max\{2\mathcal{M},A2^{-\varepsilon_{0}}\}}\right)^{p-2}\leq\left(\frac{A}{\max\{\omega,AR^{\varepsilon_{0}}\}}\right)^{p-2}=\tilde{a}_{0}.
\end{align*}
Then using Proposition \ref{PRO90} and Remark \ref{REM10}, we deduce that there exists a constant $0<\alpha\leq\varepsilon_{0}$ depending only on the data such that for any $t_{0}\in(-1/2,0]$ and $\rho\in(0,\frac{1}{2}]$,
\begin{align*}
\mathop{osc}\limits_{[(0,t_{0})+Q(\rho,\hat{a}_{0}\rho^{p+\vartheta})]}u\leq C\rho^{\alpha},
\end{align*}
which leads to that for every $(x,t)\in B_{1/2}\times(-1/2,t_{0}]$,

$(i)$ if $|t-t_{0}|\leq \hat{a}_{0}2^{-(p+\vartheta)}$,
\begin{align*}
|u(x,t)-u(0,t_{0})|\leq&|u(x,t)-u(x,t_{0})|+|u(x,t_{0})-u(0,t_{0})|\notag\\
\leq& C\big((\hat{a}_{0}^{-1}|t-t_{0}|)^{\frac{\alpha}{p+\theta}}+|x|^{\alpha}\big)\leq C\big(|x|+|t-t_{0}|^{\frac{1}{p+\vartheta}}\big)^{\alpha};
\end{align*}

$(ii)$ if $|t-t_{0}|>\hat{a}_{0}2^{-(p+\theta)}$, there exists an increasing set $\{t_{i}\}_{i=1}^{N}$, $1\leq N\leq\big[\hat{a}_{0}^{-1}2^{p+\vartheta-1}]+1$ such that $t<t_{1}\leq\cdots\leq t_{N}<t_{0}$,
\begin{align*}
|u(x,t)-u(0,t_{0})|\leq&|u(x,t)-u(x,t_{1})|+|u(x,t_{1})-u(x,t_{0})|+|u(x,t_{0})-u(0,t_{0})|\notag\\
\leq&C\big((\hat{a}_{0}^{-1}|t-t_{1}|)^{\frac{\alpha}{p+\vartheta}}+(\hat{a}_{0}^{-1}|t_{1}-t_{0}|)^{\frac{\alpha}{p+\vartheta}}+|x|^{\alpha}\big)\notag\\
\leq&C\big(|x|+|t-t_{0}|^{\frac{1}{p+\vartheta}}\big)^{\alpha},\quad\text{if}\;N=1,
\end{align*}
and
\begin{align*}
&|u(x,t)-u(0,t_{0})|\notag\\
&\leq|u(x,t)-u(x,t_{1})|+\sum^{N-1}_{i=1}|u(x,t_{i})-u(x,t_{i+1})|\notag\\
&\quad+|u(x,t_{N})-u(x,t_{0})|+|u(x,t_{0})-u(0,t_{0})|\notag\\
&\leq C\Big((\hat{a}_{0}^{-1}|t-t_{1}|)^{\frac{\alpha}{p+\vartheta}}+\sum^{N-1}_{i=1}(\hat{a}_{0}^{-1}|t_{i}-t_{i+1}|)^{\frac{\alpha}{p+\vartheta}}+(\hat{a}_{0}^{-1}|t_{N}-t_{0}|)^{\frac{\alpha}{p+\vartheta}}+|x|^{\alpha}\Big)\notag\\
&\leq C\big(|x|+|t-t_{0}|^{\frac{1}{p+\vartheta}}\big)^{\alpha},\quad \text{if}\;N\geq2.
\end{align*}
The proof is finished.

\end{proof}

\begin{proof}[Proof of Theorem \ref{THM060}]
In the following we take the proof in the case of $(w_{1},w_{2})=(|x'|^{\theta_{1}},|x'|^{\theta_{3}})$ for example. Another case is the same and thus omitted. In this case, we have $\theta_{2}=\theta_{4}=0$ and $\vartheta=\theta_{1}-\theta_{3}$. By a translation, we deduce from the proof of Theorem \ref{ZWTHM90} with a slight modification that there exists two constants $0<\alpha\leq\varepsilon_{0}$ and $C>1$, both depending only on the data, such that for any given $\bar{x}\in\{(0',\bar{x}_{n}):|\bar{x}_{n}|\leq1/2\}$ and $t_{0}\in (-1/2,0]$,
\begin{align}\label{QM916}
|u(x,t)-u(\bar{x},t_{0})|\leq C\big(|x-\bar{x}|+|t-t_{0}|^{\frac{1}{p+\vartheta}}\big)^{\alpha},
\end{align}
for all $(x,t)\in B_{1/4}(\bar{x})\times(-1/2,t_{0}]$. For $R\in(0,1/4)$ and $(y,s)\in Q(1/R,1/R^{p+\vartheta})$, let $u_{R}(y,s)=u(Ry,R^{p+\vartheta}s)$ and $\phi_{3,R}(y,s)=\phi_{3}(Ry,R^{p+\vartheta}s)$. Then $u_{R}$ solves
\begin{align*}
|y'|^{\theta_{1}}\partial_{s}u_{R}-\mathrm{div}(|y'|^{\theta_{3}}|\nabla u_{R}|^{p-2}\nabla u_{R})=|y'|^{\theta_{3}}\left(\lambda_{3}|\nabla u_{R}|^{p}+R^{p}\phi_{3,R}\right),
\end{align*}
for $(y,s)\in Q(1/R,1/R^{p+\vartheta})$.

For any given $(x,t),(\tilde{x},\tilde{t})\in B_{1/4}\times(-1/2,0],$ suppose without loss of generality that $|\tilde{x}'|\leq|x'|$. Let $R=|x'|$. Then applying the proof of Theorem \ref{ZWTHM90} with minor modification, we obtain that there exist two constants $0<\gamma<1$ and $C>1$, both depending only upon the data, such that for any fixed $\bar{y}\in B_{1/(2R)}\cap\{|y'|=1\}$ and $\bar{s}\in(-2^{-1} R^{-(p+\vartheta)},0]$,
\begin{align}\label{WAQA001}
|u_{R}(y,s)-u_{R}(\bar{y},\bar{s})|\leq C\Big(|y-\bar{y}|+\big(\hat{a}_{0}^{-1}|s-\bar{s}|\big)^{1/p}\Big)^{\gamma},
\end{align}
for any $(y,s)$ satisfying that $|y-\bar{y}|+(\hat{a}_{0}^{-1}|s-\bar{s}|)^{1/p}\leq1/4$. For later use, we limit the range of $\gamma$ to be in $(0,\alpha]$. Otherwise, if $\gamma>\alpha$, then \eqref{WAQA001} also holds for any $\gamma\in(0,\alpha]$.

Note that
\begin{align}\label{QT01}
|u(x,t)-u(\tilde{x},\tilde{t})|\leq&|u(x,t)-u(x,\tilde{t})|+|u(x,\tilde{t})-u(x',\tilde{x}_{n},\tilde{t})|\notag\\
&+|u(x',\tilde{x}_{n},\tilde{t})-u(\tilde{x},\tilde{t})|.
\end{align}
Set $c_{1}\geq1+\frac{p}{p+\vartheta}$. If $|t-\tilde{t}|\leq \hat{a}_{0}R^{c_{1}(p+\vartheta)}$, it follows from \eqref{WAQA001} that
\begin{align*}
&|u(x,t)-u(x,\tilde{t})|=\left|u_{R}(x/R,t/R^{p+\vartheta})-u_{R}(x/R,\tilde{t}/R^{p+\vartheta})\right|\notag\\
&\leq C|(t-\tilde{t})/(\hat{a}_{0}R^{p+\vartheta})|^{\frac{\gamma}{p}}\leq C|t-\tilde{t}|^{\frac{(c_{1}-1)\gamma}{c_{1}p}}.
\end{align*}
By contrast, if $|t-\tilde{t}|>\hat{a}_{0}R^{c_{1}(p+\vartheta)}$, using \eqref{QM916}, we obtain
\begin{align*}
&|u(x,t)-u(x,\tilde{t})|\notag\\
&\leq|u(x,t)-u(0',x_{n}.t)|+|u(0',x_{n},t)-u(0',x_{n},\tilde{t})|+|u(0',x_{n},\tilde{t})-u(x,\tilde{t})|\notag\\
&\leq C\big(R^{\alpha}+(\hat{a}_{0}^{-1}|t-\tilde{t}|)^{\frac{\alpha}{p+\vartheta}}\big)\leq C|t-\tilde{t}|^{\frac{\alpha}{c_{1}(p+\vartheta)}}.
\end{align*}

We now proceed to deal with the remaining two terms in the right hand side of \eqref{QT01}. Choose $c_{2}\geq2$. If $|x_{n}-\tilde{x}_{n}|\leq R^{c_{2}}$, we have from \eqref{WAQA001} that
\begin{align*}
&|u(x,\tilde{t})-u(x',\tilde{x}_{n},\tilde{t})|=\left|u_{R}(x/R,\tilde{t}/R^{p+\vartheta})-u_{R}(x'/R,\tilde{x}_{n}/R,\tilde{t}/R^{p+\vartheta})\right|\notag\\
&\leq C|(x_{n}-\tilde{x}_{n})/R|^{\gamma}\leq C|x_{n}-\tilde{x}_{n}|^{\frac{(c_{2}-1)\gamma}{c_{2}}},
\end{align*}
while, if $|x_{n}-\tilde{x}_{n}|>R^{c_{2}}$, it follows from \eqref{QM916} that
\begin{align*}
&|u(x,\tilde{t})-u(x',\tilde{x}_{n},\tilde{t})|\notag\\
&\leq|u(x,\tilde{t})-u(0',x_{n},\tilde{t})|+|u(0',x_{n},\tilde{t})-u(0',\tilde{x}_{n},\tilde{t})|+|u(0',\tilde{x}_{n},\tilde{t})-u(x',\tilde{x}_{n},\tilde{t})|\notag\\
&\leq C(|x'|^{\alpha}+|x_{n}-\tilde{x}_{n}|^{\alpha})\leq C|x_{n}-\tilde{x}_{n}|^{\frac{\alpha}{c_{2}}}.
\end{align*}
Similarly, if $|x'-\tilde{x}'|\leq R^{c_{2}}$, we obtain from \eqref{WAQA001} that
\begin{align*}
&|u(x',\tilde{x}_{n},\tilde{t})-u(\tilde{x},\tilde{t})|=\left|u_{R}(x'/R,\tilde{x}_{n}/R,\tilde{t}/R^{p+\vartheta})-u_{R}(\tilde{x}/R,\tilde{t}/R^{p+\vartheta})\right|\notag\\
&\leq C|(x'-\tilde{x}')/R|^{\gamma}\leq C|x'-\tilde{x}'|^{\frac{(c_{2}-1)\gamma}{c_{2}}},
\end{align*}
while, if $|x'-\tilde{x}'|>R^{c_{2}}$, utilizing \eqref{QM916}, it follows that
\begin{align*}
&|u(x',\tilde{x}_{n},\tilde{t})-u(\tilde{x},\tilde{t})|\leq|u(x',\tilde{x}_{n},\tilde{t})-u(0',\tilde{x}_{n},\tilde{t})|+|u(0',\tilde{x}_{n},\tilde{t})-u(\tilde{x},\tilde{t})|\notag\\
&\leq C\big(R^{\alpha}+|\tilde{x}'|^{\alpha}\big)\leq C|x'-\tilde{x}'|^{\frac{\alpha}{c_{2}}}.
\end{align*}
Based on these above facts, we pick $c_{1}=1+\frac{p\alpha}{\gamma(p+\vartheta)}$ and $c_{2}=1+\frac{\alpha}{\gamma}$ such that
\begin{align}\label{D87}
\frac{(c_{1}-1)\gamma}{c_{1}p}=\frac{\alpha}{c_{1}(p+\vartheta)},\quad \frac{(c_{2}-1)\gamma}{c_{2}}=\frac{\alpha}{c_{2}}.
\end{align}
Remark that since $\frac{c_{i}-1}{c_{i}}$ increases in $c_{i}$ and $c_{i}^{-1}$ decreases in $c_{i}$ for $i=1,2,$ then the values of $c_{1}$ and $c_{2}$ taken in \eqref{D87} maximize the H\"{o}lder regularity exponent and thus the choice is best. Therefore, we obtain that for any $(x,t),(\tilde{x},\tilde{t})\in B_{1/4}\times(-1/2,0),$
\begin{align*}
|u(x,t)-u(\tilde{x},\tilde{t})|\leq C\big(|x-\tilde{x}|+|t-\tilde{t}|^{\frac{1}{p+\vartheta}}\big)^{\frac{\alpha\gamma}{\alpha+\gamma}}.
\end{align*}
This leads to that Theorem \ref{THM060} holds.

\end{proof}

\section{Appendix:\,An alternative proof for the expansion of time}
\begin{prop}\label{LEM30}
Assume as in Lemma \ref{LEM0035}. Then there exists two constants $A_{1},j_{0}>2$ depending only on the data such that we have either $\omega\leq A_{1}R^{\varepsilon_{0}}$, or if for some $\bar{t}\in[-1,-\omega^{2-p}R^{p+\vartheta}]$,
\begin{align}\label{QE010}
|B_{R}\cap\{u(\cdot,\bar{t})>\mu^{+}-2^{-1}\omega\}|_{\mu_{w_{1}}}\leq2^{-1}|B_{R}|_{\mu_{w_{1}}},
\end{align}
then
\begin{align*}
|B_{R/2}\cap\{u(\cdot,t)>\mu^{+}-2^{-(j_{0}+1)}\omega\}|_{\mu_{w_{1}}}\leq\frac{3}{4}|B_{R/2}|_{\mu_{w_{1}}},\;\,\forall\,t\in[\bar{t},\bar{t}+\omega^{2-p}R^{p+\vartheta}].
\end{align*}

\end{prop}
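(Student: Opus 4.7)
The plan is to derive the time-expansion via DiBenedetto's logarithmic-energy estimate, which provides an alternative to the Caccioppoli-style argument of Lemma~\ref{LEM0035}. Set $k := \mu^{+} - \omega/2$, so that $(u-k)_{+} \leq \omega/2$ on the reference cylinder, and for an integer $j \geq 2$ to be selected later introduce the truncated logarithm
$$\Psi_{j}(u) := \ln^{+}\!\left(\frac{\omega/2}{\omega/2 - (u-k)_{+} + \omega/2^{j+1}}\right),$$
which satisfies $0 \leq \Psi_{j} \leq j\ln 2$, $\Psi_{j} \equiv 0$ on $\{u \leq k\}$, $\Psi_{j}' \geq 2/\omega$ wherever positive, and the Bernoulli identity $\Psi_{j}'' = (\Psi_{j}')^{2}$. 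Pick a purely spatial cutoff $\xi \in C_{c}^{\infty}(B_{R})$ with $\xi \equiv 1$ on $B_{R/2}$ and $|\nabla \xi| \leq 4/R$, and test the Steklov-averaged equation with $\varphi := 2\Psi_{j}(u)\Psi_{j}'(u)\xi^{p}$, as in the proof of Lemma~\ref{lem003}. Using $\partial_{\tau}(\Psi_{j}^{2}) = 2\Psi_{j}\Psi_{j}'\partial_{\tau}u$, the chain rule $\nabla\varphi = 2(1+\Psi_{j})(\Psi_{j}')^{2}\xi^{p}\nabla u + 2p\Psi_{j}\Psi_{j}'\xi^{p-1}\nabla \xi$, and the structure conditions $(\mathbf{H1})$--$(\mathbf{H3})$ together with Young's inequality to absorb the principal term $2\lambda_{1}(1+\Psi_{j})(\Psi_{j}')^{2}|\nabla u|^{p}\xi^{p}w_{2}$, one arrives after integration in time at
\begin{align*}
\int_{B_{R}} \Psi_{j}^{2}(u(\cdot,t))\,\xi^{p}w_{1}\,dx
&\leq \int_{B_{R}} \Psi_{j}^{2}(u(\cdot,\bar t))\,\xi^{p}w_{1}\,dx \\
&\quad + C\!\int_{\bar t}^{t}\!\int_{B_{R}} \Psi_{j}(\Psi_{j}')^{2-p}(|\nabla \xi|^{p} + \xi^{p})\,w_{2}\,dx\,d\tau + \mathcal{E}_{\phi},
\end{align*}
where $\mathcal{E}_{\phi}$ denotes the contributions produced by $\phi_{1},\phi_{2},\phi_{3}$.

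I would then control each piece separately. The initial term is bounded using \eqref{QE010} combined with $\Psi_{j} \leq j\ln 2$ and $\Psi_{j} \equiv 0$ on $\{u(\bar t) \leq k\}$, by $(j\ln 2)^{2}|B_{R}|_{\mu_{w_{1}}}/2$. The gradient-type error uses $(\Psi_{j}')^{2-p} \leq (\omega/2)^{p-2}$ (valid since $p > 2$), $\Psi_{j} \leq j\ln 2$, the intrinsic length $|t - \bar t| \leq \omega^{2-p}R^{p+\vartheta}$, and the scaling $|B_{R}|_{\mu_{w_{2}}} \sim R^{n+\theta_{3}+\theta_{4}}$ from Lemma~\ref{LEM860} together with $\vartheta = \theta_{1}+\theta_{2}-\theta_{3}-\theta_{4}$; all intrinsic factors telescope and leave the bound $Cj|B_{R}|_{\mu_{w_{1}}}$. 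The subtle piece is $\mathcal{E}_{\phi}$: the dominant prefactor is $(\Psi_{j}')^{2} \leq (2^{j+1}/\omega)^{2}$ from the $\phi_{1}$ contribution. Applying H\"older in space-time with exponent $l_{0}$ via \eqref{ZE90} and tracking the powers of $R$ and $\omega$ yields a bound of the form $C(j,\mathrm{data})\,\omega^{-\alpha_{1}}R^{\alpha_{2}}|B_{R}|_{\mu_{w_{1}}}$ where, by the precise arithmetic enabled by \eqref{E01}, the ratio $\alpha_{2}/\alpha_{1}$ equals $\varepsilon_{0}$; consequently, imposing $\omega > A_{1}R^{\varepsilon_{0}}$ with $A_{1} = A_{1}(j_{0},\mathrm{data})$ sufficiently large absorbs $\mathcal{E}_{\phi}$ into the term $Cj|B_{R}|_{\mu_{w_{1}}}$.

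To conclude, observe that on $B_{R/2}\cap\{u(\cdot,t) > \mu^{+} - \omega/2^{j_{0}+1}\}$ and for every $j \geq j_{0}+1$, a direct computation gives the pointwise bound $\Psi_{j} \geq (j_{0}-1)\ln 2$. Since $\xi \equiv 1$ on $B_{R/2}$, the energy inequality translates to
$$((j_{0}-1)\ln 2)^{2}\bigl|B_{R/2}\cap\{u(\cdot,t) > \mu^{+} - \omega/2^{j_{0}+1}\}\bigr|_{\mu_{w_{1}}} \leq \tfrac{1}{2}(j\ln 2)^{2}|B_{R}|_{\mu_{w_{1}}} + Cj|B_{R}|_{\mu_{w_{1}}}.$$
The main obstacle lies precisely in the final numerics: the right-hand side is expressed through $|B_{R}|_{\mu_{w_{1}}}$, whereas the target involves $|B_{R/2}|_{\mu_{w_{1}}}$, and the explicit doubling ratio $|B_{R}|_{\mu_{w_{1}}}/|B_{R/2}|_{\mu_{w_{1}}} = 2^{n+\theta_{1}+\theta_{2}}$ is data-dependent and not a priori small. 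I plan to overcome this by iterating the logarithmic inequality along a telescoping family of cutoffs $\{\xi_{s}\}$ whose supports contract geometrically from $B_{R}$ to $B_{R/2}$ while the logarithmic parameter is simultaneously reduced from $j$ toward $j_{0}$, so that the accumulated doubling factor is split across the iteration and neutralised by the gain in the ratio $(j/(j_{0}-1))^{2}$. Selecting the total number of steps and the terminal $j_{0}$ both sufficiently large in terms of the data, and $A_{1}$ accordingly, should then produce the announced $3/4$-bound.
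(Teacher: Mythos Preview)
Your overall strategy---the DiBenedetto logarithmic estimate---is exactly what the paper does in the Appendix, and your computations for the initial term, the gradient-type error, and the $\mathcal{E}_{\phi}$ term are all correct. The gap is the tactical choice of cutoff. By fixing $\xi\equiv 1$ on $B_{R/2}$ you have hard-wired a doubling factor $|B_{R}|_{\mu_{w_{1}}}/|B_{R/2}|_{\mu_{w_{1}}}\sim 2^{n+\theta_{1}+\theta_{2}}$ into the final numerics, and your proposed telescoping iteration to neutralise it is both vague and unnecessary: there is no natural iterative structure in the logarithmic inequality to exploit, and it is not clear the scheme you sketch would close.

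The paper's fix is a one-line adjustment, not an iteration. Replace $B_{R/2}$ by $B_{(1-\varrho)R}$ with $\varrho\in(0,1)$ a \emph{free} parameter, so that $|\nabla\zeta|\leq C/(\varrho R)$. Your gradient-type error then becomes $Cj_{0}\varrho^{-p}|B_{R}|_{\mu_{w_{1}}}$ instead of $Cj_{0}|B_{R}|_{\mu_{w_{1}}}$, and the lower bound on $\Psi$ yields
\[
\big|B_{(1-\varrho)R}\cap\{u(\cdot,t)>\mu^{+}-2^{-(j_{0}+1)}\omega\}\big|_{\mu_{w_{1}}}
\leq \Big(\tfrac{j_{0}}{j_{0}-1}\Big)^{2}\Big(\tfrac{1}{2}+\tfrac{C}{j_{0}\varrho^{p}}\Big)|B_{R}|_{\mu_{w_{1}}}.
\]
Now add back the thin annulus via $|B_{R}\setminus B_{(1-\varrho)R}|_{\mu_{w_{1}}}\leq C\varrho\,|B_{R}|_{\mu_{w_{1}}}$, so that the full $B_{R}$-level set is controlled by
\[
\frac{j_{0}^{2}}{2(j_{0}-1)^{2}}+\frac{C}{j_{0}\varrho^{p}}+C\varrho.
\]
Optimising with $\varrho=j_{0}^{-1/(p+1)}$ makes the two error terms each of order $j_{0}^{-1/(p+1)}$, and choosing $j_{0}$ large (depending only on the data) brings the total below $3/4$. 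This is the entire endgame; no telescoping is required. With this correction your argument coincides with the paper's.
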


We prepare to use the following logarithmic estimates to prove Proposition \ref{LEM30}. For that purpose, we first introduce the logarithmic function as follows:
\begin{align}\label{P998}
\Psi^{\pm}_{k,\delta}(u):=\Psi(H_{k}^{\pm},(u-k)_{\pm},\delta)=\ln^{+}\frac{H_{k}^{\pm}}{H_{k}^{\pm}-(u-k)_{\pm}+\delta},\quad0<\delta<H_{k}^{\pm},
\end{align}
where $H_{k}^{\pm}=\sup\limits_{[(x_{0},t_{0})+Q(\rho,\tau)]}(u-k)_{\pm}$, $\ln^{+}$ means that $\ln^{+}s=\max\{\ln s,0\}$ for $s>0$. For any fixed $B_{\rho}(x_{0})\subset B_{1}$, let $\zeta\in C^{\infty}(B_{1})$ be a smooth cutoff function satisfying that
\begin{align}\label{CUT01}
0\leq\zeta\leq1,\;\,|\nabla\zeta|<\infty\;\,\mathrm{in}\;B_{1},\quad \mathrm{and}\;\,\zeta=0\;\,\mathrm{in}\;B_{1}\setminus B_{\rho}(x_{0}).
\end{align}
The required logarithmic inequalities are now stated as follows.
\begin{lemma}\label{lem8021}
Let $u$ be the solution to problem \eqref{PO001} with $\Omega\times(-T,0]=B_{1}\times(-1,0]$. Then for any cylinder $[(x_{0},t_{0})+Q(\rho,\tau)]\subset B_{1}\times(-1,0]$, we obtain
\begin{align*}
&\sup\limits_{t_{0}-\tau<t<t_{0}}\int_{B_{\rho}(x_{0})}\Psi^{\pm}_{k,\delta}(u)(x,t)\zeta^{p}(x)w_{1}dx\notag\\
&\leq\int_{B_{\rho}(x_{0})}\Psi^{\pm}_{k,\delta}(u)(x,t_{0}-\tau)\zeta^{p}(x)w_{1}dx\notag\\
&\quad+C\int_{[(x_{0},t_{0})+Q(\rho,\tau)]}\Psi^{\pm}_{k,\delta}(u)|\partial_{u}\Psi^{\pm}_{k,\delta}(u)|^{2-p}(|\nabla\zeta|^{p}+|\zeta|^{p})w_{2}dxdt\notag\\
&\quad+\frac{C}{\delta^{2}}\left(1+\ln\frac{H_{k}^{\pm}}{\delta}\right)\|\phi\|_{L^{l_{0}}(B_{1}\times(-1,0),w_{2})}|[(x_{0},t_{0})+Q(\rho,\tau)]\cap \{v_{\pm}>0\}|_{\nu_{w_{2}}}^{1-\frac{1}{l_{0}}},
\end{align*}
where $\zeta$ is defined by \eqref{CUT01} and $\phi=\phi_{1}+\phi_{2}^{\frac{p}{p-1}}+\phi_{3}^{\frac{p}{p-1}}$.

\end{lemma}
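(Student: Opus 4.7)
The plan is to follow the template of Lemma \ref{lem003}, with the truncation $(u-k)_\pm$ replaced by the log composition $\Psi := \Psi^\pm_{k,\delta}(u)$. Writing $\Psi' := \partial_u \Psi^\pm_{k,\delta}$ for brevity, I would insert into the Steklov-averaged equation (see \eqref{WM01}) the admissible test function $\varphi = \pm 2\, \Psi(u_h)\, \Psi'(u_h)\, \zeta^p$, which is bounded and vanishes on $\partial B_\rho(x_0)\times(t_0-\tau,t_0)$, and then let $h\to 0$ using Lemma \ref{LEMA90}. Because $\zeta$ depends only on $x$, no term involving $\partial_t \zeta$ will appear.

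For the parabolic part, the antiderivative identity $2\Psi\Psi' = \partial_u(\Psi^2)$ converts the time integral into the difference of $\int_{B_\rho(x_0)} \Psi^2(x,\cdot)\, \zeta^p w_1\, dx$ at the endpoints; the stated bound on $\int \Psi\, \zeta^p w_1$ then follows from this natural $\Psi^2$-estimate together with the pointwise bound $\Psi \leq \ln(H_k^\pm/\delta)$. For the spatial part, the key identity $\partial_u^2 \Psi^\pm_{k,\delta} = (\Psi')^2$ peculiar to the log function gives
\begin{align*}
\nabla \varphi = \pm 2(1+\Psi)(\Psi')^2 \nabla u\, \zeta^p \pm 2p\, \Psi \Psi'\, \zeta^{p-1} \nabla \zeta .
\end{align*}
Then $(\mathbf{H1})$ together with the chain rule $|\nabla \Psi| = \Psi'|\nabla u|$ produces a favorable term bounded below by $2\lambda_1 (1+\Psi)(\Psi')^{2-p}|\nabla \Psi|^p \zeta^p w_2$, while the gradient cross term and the $b$-term from $(\mathbf{H2})$--$(\mathbf{H3})$ are handled by Young's inequality, which returns a small fraction of $\Psi(\Psi')^{2-p}|\nabla \Psi|^p \zeta^p w_2$ to the good term and leaves precisely $C\Psi(\Psi')^{2-p}(|\nabla \zeta|^p + |\zeta|^p)\, w_2$ as a residual.

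It remains to handle the source contributions from $\phi_1,\phi_2^{p/(p-1)},\phi_3^{p/(p-1)}$. On $\{v_\pm > 0\}$ one has the pointwise bounds $\Psi' \leq 1/\delta$ and $\Psi \leq \ln(H_k^\pm/\delta)$, so every such contribution is dominated pointwise on the truncation set by $C\delta^{-2}(1 + \ln(H_k^\pm/\delta))\, \phi\, w_2$ with $\phi$ as in \eqref{ZE90}. A single application of H\"{o}lder's inequality in space--time with exponent $l_0$, exactly as in the proof of Lemma \ref{lem003}, then produces the asserted error term.

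The main technical obstacle will be the admissibility of the nonlinear test function $\Psi(u_h)\Psi'(u_h)\zeta^p$ and the passage to the Steklov limit $h \to 0$; since $\Psi$ is $C^1$ in $u$ with both $\Psi$ and $\Psi'$ uniformly bounded on the range of $u$, this reduces to exactly the same approximation argument as in Lemma \ref{lem003} combined with Lemma \ref{LEMA90}, so no essentially new difficulty is expected.
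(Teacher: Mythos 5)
Your proposal follows essentially the same route as the paper's proof: the test function $\partial_u[\Psi^2](u_h)\,\zeta^p$ inserted into the Steklov-averaged formulation, the identities $\partial_u^2\Psi^{\pm}_{k,\delta}=(\partial_u\Psi^{\pm}_{k,\delta})^2$ and $[\Psi^2]''=2(1+\Psi)(\partial_u\Psi)^2$ (guaranteeing admissibility and the endpoint $\Psi^2$-terms), Young's inequality for the cross and $b$-terms, the pointwise bounds $\Psi\leq\ln(H_k^{\pm}/\delta)$ and $|\partial_u\Psi|\leq\delta^{-1}$ on the truncation set, and one H\"{o}lder application with exponent $l_0$, exactly as in Lemma \ref{lem003}. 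The only caveat, which the paper itself shares, is that this argument naturally produces the estimate with $[\Psi^{\pm}_{k,\delta}(u)]^2$ (the form actually invoked later in Proposition \ref{LEM30}) rather than the first power displayed in the lemma, so your final reduction remark coincides with the paper's own implicit identification.
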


\begin{proof}
Without loss of generality, assume that $(x_{0},t_{0})=(0,0).$ For simplicity, denote
\begin{align*}
\psi(f):=\Psi^{\pm}_{k,\delta}(f),\quad\psi':=\partial_{f}\psi,\quad f=u\;\mathrm{or}\;u_{h}.
\end{align*}
Set $\varphi=[\psi^{2}(u_{h})]'\zeta^{p}$. Let $\chi_{\Sigma}$ represent the characteristic function of the set $\Sigma$. It follows from a straightforward computation that
\begin{align*}
\psi'(u_{h})=\frac{\pm\chi_{\{(u_{h}-k)_{\pm}>0\}}}{H_{k}^{\pm}-(u_{h}-k)_{\pm}+\delta},\quad\psi''(u_{h})=[\psi'(u_{h})]^{2},
\end{align*}
and
\begin{align*}
[\psi^{2}(u_{h})]''=2(1+\psi(u_{h}))\psi'^{2}(u_{h})\in L_{loc}^{\infty}(B_{1}\times(-1,0)),
\end{align*}
which implies that $\varphi$ is an admissible test function. Similarly as before, we obtain that for any $-\tau\leq s\leq0$,
\begin{align}\label{W86}
&\int_{-\tau}^{s}\int_{B_{\rho}}(\partial_{t}u_{h}\varphi w_{1}+w_{2}[\mathbf{a}(x,t,u,\nabla u)]_{h}\cdot\nabla\varphi)dxdt\notag\\
&=\int_{-\tau}^{s}\int_{B_{\rho}}[b(x,t,u,\nabla u)]_{h}\varphi w_{2}dxdt.
\end{align}
To begin with, integrating by parts and using Lemma \ref{LEMA90}, we have
\begin{align*}
&\int_{-\tau}^{s}\int_{B_{\rho}}\partial_{t}u_{h}\varphi w_{1}=\int_{B_{\rho}\times(-\tau,s)}\partial_{t}\psi^{2}(u_{h})\zeta^{p}w_{1}\notag\\
&=\int_{B_{\rho}}\psi^{2}(u_{h})(x,s)\zeta^{p}(x)w_{1}dx-\int_{B_{\rho}}\psi^{2}(u_{h})(x,-\tau)\zeta^{p}(x)w_{1}dx\notag\\
&\rightarrow\int_{B_{\rho}}\psi^{2}(u)(x,s)\zeta^{p}(x)w_{1}dx-\int_{B_{\rho}}\psi^{2}(u)(x,-\tau)\zeta^{p}(x)w_{1}dx,\quad\text{as }h\rightarrow0.
\end{align*}
For simplicity, denote
\begin{align*}
Q_{s,k}:=(B_{\rho}\times(-\tau,s))\cap\{(u-k)_{\pm}>0\}.
\end{align*}
As for the remaining two terms in \eqref{W86}, by first letting $h\rightarrow0$, it then follows from Lemma \ref{LEMA90}, $\mathrm{(}\mathbf{H1}\mathrm{)}$--$\mathrm{(}\mathbf{H3}\mathrm{)}$ and Young's inequality that
\begin{align*}
&\int_{-\tau}^{s}\int_{B_{\rho}}[\mathbf{a}(x,t,u,\nabla u)]_{h}\cdot\nabla\varphi w_{2}dxdt\notag\\
&\rightarrow\int_{B_{\rho}\times(-\tau,s)}\mathbf{a}(x,t,u,\nabla u)\cdot\big([\psi^{2}(u)]''\nabla u\zeta^{p}+p[\psi^{2}(u)]'\zeta^{p-1}\nabla\zeta\big) w_{2}dxdt\notag\\
&\geq2\lambda_{1}\int_{Q_{s,k}}(1+\psi)\psi'^{2}|\nabla u|^{p}\zeta^{p}w_{2}dxdt-2\int_{Q_{s,k}}(1+\psi)\psi'^{2}\phi_{1}\zeta^{p}w_{2}dxdt\notag\\
&\quad-2p\lambda_{2}\int_{Q_{s,k}}\psi|\psi'|\zeta^{p-1}|\nabla\zeta||\nabla u|^{p-1}w_{2}dxdt\notag\\
&\quad-2p\int_{Q_{s,k}}\psi|\psi'|\zeta^{p-1}|\nabla\zeta|\phi_{2}w_{2}dxdt\notag\\
&\geq\lambda_{1}\int_{Q_{s,k}}(1+\psi)\psi'^{2}|\nabla u|^{p}\zeta^{p}w_{2}dxdt-2\int_{Q_{s,k}}(1+\psi)\psi'^{2}\phi_{1}\zeta^{p}w_{2}dxdt\notag\\
&\quad-C\int_{Q_{s,k}}\psi|\psi'|^{2-p}|\nabla\zeta|^{p}w_{2}dxdt-C\int_{Q_{s,k}}\psi\psi'^{2}\phi_{2}^{\frac{p}{p-1}}\zeta^{p}w_{2}dxdt,
\end{align*}
and
\begin{align*}
&\int_{-\tau}^{s}\int_{B_{\rho}}[b(x,t,u,\nabla u)]_{h}\varphi w_{2}dxdt\rightarrow\int_{B_{\rho}\times(-\tau,s)}b(x,t,u,\nabla u)[\psi^{2}(u)]'\zeta^{p}w_{2}dxdt\notag\\
&\leq2\lambda_{3}\int_{Q_{s,k}}\psi|\psi'||\nabla u|^{p-1}\zeta^{p}w_{2}dxdt+2\int_{Q_{s,k}}\psi|\psi'|\zeta^{p}\phi_{3}w_{2}dxdt\notag\\
&\leq\frac{2\lambda_{1}}{3}\int_{Q_{s,k}}(1+\psi)\psi'^{2}\zeta^{p}|\nabla u|^{p}w_{2}dxdt+C\int_{Q_{s,k}}\psi|\psi'|^{2-p}|\zeta|^{p}w_{2}dxdt\notag\\
&\quad+\frac{2}{\delta}\ln\left(\frac{H_{k}^{\pm}}{\delta}\right)\int_{Q_{s,k}}\phi_{3}\zeta^{p}w_{2}dxdt,
\end{align*}
where we also used the facts of $\psi\leq\ln\frac{H_{k}^{\pm}}{\delta}$ and $|\psi'|\leq\frac{1}{\delta}$. Combining these above facts, we have from H\"{o}lder's inequality that
\begin{align*}
&\sup\limits_{-\tau<t<0}\int_{B_{\rho}}\Psi^{\pm}_{k,\delta}(u)(x,t)\zeta^{p}(x)w_{1}dx\notag\\
&\leq\int_{B_{\rho}}\Psi^{\pm}_{k,\delta}(u)(x,-\tau)\zeta^{p}(x)w_{1}dx+C\int_{Q(\rho,\tau)}\Psi^{\pm}_{k,\delta}(u)|\partial_{u}\Psi^{\pm}_{k,\delta}(u)|^{2-p}(|\nabla\zeta|^{p}+|\zeta|^{p})w_{2}\notag\\
&\quad+\frac{C}{\delta^{2}}\left(1+\ln\frac{H_{k}^{\pm}}{\delta}\right)\int_{Q(\rho,\tau)\cap\{(u-k)_{\pm}>0\}}\phi w_{2}dxdt\notag\\
&\leq\int_{B_{\rho}}\Psi^{\pm}_{k,\delta}(u)(x,-\tau)\zeta^{p}(x)w_{1}dx+C\int_{Q(\rho,\tau)}\Psi^{\pm}_{k,\delta}(u)|\partial_{u}\Psi^{\pm}_{k,\delta}(u)|^{2-p}(|\nabla\zeta|^{p}+|\zeta|^{p})w_{2}\notag\\
&\quad+\frac{C}{\delta^{2}}\left(1+\ln\frac{H_{k}^{\pm}}{\delta}\right)\|\phi\|_{L^{l_{0}}(B_{1}\times(-1,0),w_{2})}|Q(\rho,\tau)\cap \{(u-k)_{\pm}>0\}|_{\nu_{w_{2}}}^{1-\frac{1}{l_{0}}}.
\end{align*}
The proof is complete.

\end{proof}

We are now ready to use the logarithmic estimates in Lemma \ref{lem8021} to complete the proof of Proposition \ref{LEM30}.
\begin{proof}[Proof of Proposition \ref{LEM30}]
Let $k=\mu^{+}-\frac{\omega}{2}$ and take $\delta=\frac{\omega}{2^{j_{0}+1}}$ in \eqref{P998} for some positive constant $j_{0}>2$ to be determined later. For brevity, write
\begin{align*}
\Psi:=\ln^{+}\frac{H_{k}^{+}}{H_{k}^{+}-(u-(\mu^{+}-\frac{\omega}{2}))_{+}+\frac{\omega}{2^{j_{0}+1}}},
\end{align*}
where
\begin{align*}
H_{k}^{+}=\sup\limits_{Q_{R}^{+}(\omega)}\left(u-\left(\mu^{+}-\frac{\omega}{2}\right)\right)_{+}\leq\frac{\omega}{2},\quad Q_{R}^{+}(\omega):=B_{R}\times[\bar{t},\bar{t}+\omega^{2-p}R^{p+\vartheta}].
\end{align*}
Let $\zeta\in C^{\infty}(B_{R})$ be a smooth cutoff function satisfying that $\zeta=1$ in $B_{(1-\varrho)R}$, $0\leq\zeta\leq1$ and $|\nabla\zeta|\leq\frac{4}{\varrho R}$ in $B_{R}$, where $\varrho\in(0,1)$ is to be chosen later. Observe that
\begin{align*}
\Psi\leq\ln\left(\frac{\frac{\omega}{2}}{\frac{\omega}{2^{j_{0}+1}}}\right)=j_{0}\ln2,
\end{align*}
and
\begin{align*}
|\partial_{u}\Psi|^{2-p}=\Big|H_{k}^{+}-(u-k)_{+}+\frac{\omega}{2^{j_{0}+1}}\Big|^{p-2}\leq\omega^{p-2}.
\end{align*}
Based on these above facts, it follows from \eqref{QE010}, Lemmas \ref{LEM860} and \ref{lem8021} that for any $t\in[\bar{t},\bar{t}+\omega^{2-p}R^{p+\vartheta}]$, if $\omega>A_{1}R^{\varepsilon_{0}}$ with $A_{1}:=4^{\frac{j_{0}l_{0}}{p(l_{0}-1)+2}}$,
\begin{align}\label{E90}
&\int_{B_{(1-\varrho)R}}\Psi^{2}(x,t)w_{1}dx\leq\int_{B_{R}}\Psi^{2}(x,\bar{t})\zeta(x)w_{1}dx+\frac{C}{R^{p}}\int_{Q_{R}^{+}(\omega)}\Psi|\partial_{u}\Psi|^{2-p}w_{2}dxdt\notag\\
&\quad+C\Big(\frac{2^{j_{0}}}{\omega}\Big)^{2}\left(1+\ln\frac{H_{k}^{+}2^{j_{0}}}{\omega}\right)\Big|Q_{R}^{+}(\omega)\cap \Big\{u>\mu^{+}-\frac{\omega}{2}\Big\}\Big|_{\nu_{w_{2}}}^{1-\frac{1}{l_{0}}}\notag\\
&\leq \frac{(j_{0}\ln2)^{2}}{2}|B_{R}|_{\mu_{w_{1}}}+ Cj_{0}\Big(\varrho^{-p}+4^{j_{0}}R^{\frac{p(l_{0}-1)-n-\theta_{1}-\theta_{2}}{l_{0}}}\omega^{-\frac{p(l_{0}-1)+2}{l_{0}}}\Big)|B_{R}|_{\mu_{w_{1}}}\notag\\
&\leq (j_{0}\ln2)^{2}|B_{R}|_{\mu_{w_{1}}}\left(\frac{1}{2}+\frac{C}{j_{0}\varrho^{p}}\right).
\end{align}
Note that for $(x,t)\in\{x\in B_{(1-\varrho)R}:u>\mu^{+}-2^{-(j_{0}+1)}\omega\}\times[\bar{t},\bar{t}+\omega^{2-p}R^{p+\vartheta}]$,
\begin{align*}
\Psi^{2}(x,t)\geq\ln^{2}\frac{H_{k}^{+}}{H_{k}^{+}-\frac{\omega}{2}+\frac{\omega}{2^{j_{0}}}}\geq\ln^{2}\left(\frac{\frac{\omega}{2}}{\frac{\omega}{2^{j_{0}}}}\right)=(j_{0}-1)^{2}(\ln2)^{2},
\end{align*}
where we also utilized the fact that $\ln\frac{H_{k}^{+}}{H_{k}^{+}-\frac{\omega}{2}+\frac{\omega}{2^{j_{0}}}}$ is decreasing in $H_{k}^{+}$. Then we deduce that for any $t\in[\bar{t},\bar{t}+\omega^{2-p}R^{p+\vartheta}]$,
\begin{align*}
\int_{B_{(1-\varrho)R}}\Psi^{2}(x,t)w_{1}dx\geq(j_{0}-1)^{2}(\ln2)^{2}\Big|B_{(1-\varrho)R}\cap\Big\{u(\cdot,t)>\mu^{+}-\frac{\omega}{2^{j_{0}+1}}\Big\}\Big|_{\mu_{w_{1}}}.
\end{align*}
Substituting this into \eqref{E90}, we obtain that for any $t\in[\bar{t},\bar{t}+\omega^{2-p}R^{p+\vartheta}]$,
\begin{align*}
&\Big|B_{(1-\varrho)R}\cap\Big\{u(\cdot,t)>\mu^{+}-\frac{\omega}{2^{j_{0}+1}}\Big\}\Big|_{\mu_{w_{1}}}\leq\left(\frac{j_{0}}{j_{0}-1}\right)^{2}|B_{R}|_{\mu_{w_{1}}}\left(\frac{1}{2}+\frac{C}{j_{0}\varrho^{p}}\right).
\end{align*}
Observe that $|B_{R}\setminus B_{(1-\varrho)R}|_{\mu_{w_{1}}}\leq C\varrho|B_{R}|_{\mu_{w_{1}}}$. Then by choosing $\varrho=j_{0}^{-\frac{1}{p+1}}$ and $j_{0}=(24\overline{C}_{\ast})^{p+1}$, we deduce that for any $t\in[\bar{t},\bar{t}+\omega^{2-p}R^{p+\vartheta}]$,
\begin{align*}
&\frac{\big|B_{R}\cap\big\{u(\cdot,t)>\mu^{+}-\frac{\omega}{2^{j_{0}+1}}\big\}\big|_{\mu_{w_{1}}}}{|B_{R}|_{\mu_{w_{1}}}}\notag\\
&\leq\frac{j_{0}^{2}}{2(j_{0}-1)^{2}}+\frac{\overline{C}_{\ast}}{\varrho^{p}}\left(\varrho^{p+1}+\frac{1}{j_{0}}\right)\leq\frac{j_{0}^{2}}{2(j_{0}-1)^{2}}+\frac{2\overline{C}_{\ast}}{\sqrt[p+1]{j_{0}}}\leq\frac{3}{4}.
\end{align*}

\end{proof}


\noindent{\bf{\large Acknowledgements.}} This work was partially supported by the National Key research and development program of
China (No. 2022YFA1005700 and 2020Y-FA0712903). C. Miao was partially supported by the National Natural Science Foundation of China (No. 12026407 and 12071043). Z. Zhao was partially supported by China Postdoctoral Science Foundation (No. 2021M700358).



\end{document}